\documentclass[11pt,reqno]{amsart}
\usepackage{amssymb}
\usepackage{amscd,enumerate,amsfonts,calc,amsmath,verbatim}
\usepackage{mathtools}
\usepackage{amsmath, amssymb, amsfonts, amstext, amsthm, amscd}
\usepackage[mathscr]{eucal}
\usepackage{enumerate}
\usepackage{tikz,color,soul}
\usepackage{tikz-cd}
\usetikzlibrary{matrix,arrows}
\usepackage[utf8]{inputenc}
\usepackage[english]{babel}

\usepackage{thmtools} % necessary for cleveref (Cref) to work properly

\usepackage{mathrsfs}

\usepackage{verbatim}

\usepackage[new]{old-arrows}

\usepackage{amsmath,calligra,mathrsfs}
\DeclareMathOperator{\Hom}{\mathscr{H}\text{\kern -3pt {\calligra\large om}}\,} 

\usepackage{longtable}
\usepackage{hyperref}
\usepackage[all]{xy}
\usepackage{xcolor}

\makeatletter
\def\@tocline#1#2#3#4#5#6#7{\relax
  \ifnum #1>\c@tocdepth % then omit
  \else
    \par \addpenalty\@secpenalty\addvspace{#2}%
    \begingroup \hyphenpenalty\@M
    \@ifempty{#4}{%
      \@tempdima\csname r@tocindent\number#1\endcsname\relax
    }{%
      \@tempdima#4\relax
    }%
    \parindent\z@ \leftskip#3\relax \advance\leftskip\@tempdima\relax
    \rightskip\@pnumwidth plus4em \parfillskip-\@pnumwidth
    #5\leavevmode\hskip-\@tempdima
      \ifcase #1
       \or\or \hskip 1em \or \hskip 2em \else \hskip 3em \fi%
      #6\nobreak\relax
    \hfill\hbox to\@pnumwidth{\@tocpagenum{#7}}\par% <---- \dotfill -> \hfill
    \nobreak
    \endgroup
  \fi}
\makeatother
\usepackage[capitalise]{cleveref}
\usetikzlibrary{arrows.meta, positioning,arrows}
\newtheorem{theorem}{Theorem}[section]

\newtheorem{corollary}[theorem] {Corollary}
\newtheorem{lemma}[theorem]{Lemma}

\newtheorem{proposition}[theorem]{Proposition}

\newtheorem*{theorem*}{Theorem}

\theoremstyle{definition}
\newtheorem{definition}[theorem]{Definition}
\newtheorem{example}[theorem]{Example}
\newtheorem{remark}[theorem]{Remark}

\newtheorem*{notation*}{Notation}

\newcommand{\C}{\mathbb C}

\newcommand{\N}{\mathbb N}
\newcommand{\Q}{\mathbb Q}
\newcommand{\R}{\mathbb R}
\newcommand{\Z}{\mathbb Z}
\newcommand{\GL}{\mathrm{GL}}

\newcommand{\id}{\mathrm{id}}
\newcommand{\pr}{\mathrm{pr}}
\newcommand{\B}{\widetilde{\mathfrak{B}}}

\newcommand{\scrX}{\mathscr{X}}
\newcommand{\scrXE}{\mathscr{X}(\boldsymbol{\Sigma})}
\newcommand{\scrP}{\mathscr{P}}

\newcommand{\Aut}{\mathrm{Aut}}

\newcommand{\Gm}{\mathbb{G}_m}

\def \mc{\mathcal}

\allowdisplaybreaks
\begin{document}

	\title{Equivariant principal bundles over toric Deligne--Mumford stacks}

    \author{Ramandeep Singh Arora}
	\address{Department of Mathematics, Indian Institute of Science Education and Research Pune, Pune, India}
	\email{ramandeepsingh.arora@students.iiserpune.ac.in}

    \author{Chandranandan Gangopadhyay}
	\address{Department of Mathematics, Shiv Nadar University, Greater Noida, Uttar Pradesh, India}
	\email{chandranandan.g@snu.edu.in}

    \author{Mainak Poddar}
	\address{Department of Mathematics, Indian Institute of Science Education and Research Pune, Pune, India}
	\email{mainak@iiserpune.ac.in}

	\subjclass[2020]{14M25, 14A20, 14J60, 55R91, 20E42.}
	
	\keywords{toric Deligne--Mumford stack, stacky fan, equivariant principal bundle, toric variety, reduction of structure group, Tits building}

	\begin{abstract}
		Let $\mathscr{X}(\boldsymbol{\Sigma})$ be the toric Deligne–Mumford stack with stacky torus $\mathcal{T}$ associated to the stacky fan $\boldsymbol{\Sigma} = (N,\Sigma, \beta)$.
		A toric principal $H$-bundle over $\mathscr{X}(\boldsymbol{\Sigma})$ is a principal $H$-bundle $\mathscr{P}$ equipped with a $\mathcal{T}$-action lifting the $\mathcal{T}$-action on $\mathscr{X}(\boldsymbol{\Sigma})$, such that the $\mathcal{T}$-action and the $H$-action on $\mathscr{P}$ commute. For a connected reductive algebraic group $H$ over $\mathbb{C}$, we classify the isomorphism classes of framed toric principal $H$-bundles over $\mathscr{X}(\boldsymbol{\Sigma})$ in terms of piecewise linear maps from $|\Sigma|$ to the cone over the Tits building of $H$.
        Using this classification, we describe the equivariant automorphism group of a toric principal $H$-bundle, and give a necessary and sufficient condition for an equivariant reduction of the structure group.
        We also show that every toric principal $\mathrm{GL}(r)$-bundle over the weighted projective stack $\mathbb{P}(w_0,\dots,w_n)$ splits equivariantly whenever $r < n$, and that every toric principal $H$-bundle over the weighted stacky projective line $\mathbb{P}(a,b)$ splits equivariantly when $H$ is a connected reductive algebraic group.
	\end{abstract}
	
	\maketitle
	
	% \tableofcontents

%--------------------------------------------------------
%--------------------------------------------------------
\section{Introduction}
%--------------------------------------------------------
%--------------------------------------------------------

The classification of toric vector bundles was first established by Kaneyama \cite{Kaneyama}. 
He showed that every equivariant vector bundle on a smooth complete toric variety admits a torus linearization and can be encoded by combinatorial data $(m,P)$ associated with the cone complex of the variety. 
More precisely, the bundle is determined by a system of characters on each maximal cone together with transition matrices satisfying cocycle conditions, thereby reducing the geometric problem to one of linear algebra over the fan. 
Subsequently, Klyachko \cite{Klyachko} provided a more general description in terms of families of filtrations on a fixed vector space. 
Klyachko's approach, which applies to possibly singular toric varieties, establishes an equivalence between equivariant vector bundles and collections of decreasing $\Z$-filtrations indexed by the rays of the fan, satisfying certain compatibility conditions. 
These foundational results have become central tools in toric geometry and equivariant sheaf theory.
More recently, Kaveh and Manon \cite{MR4492498} have extended the classification to principal bundles. 
In their paper, Kaveh and Manon introduced the notion of a piecewise linear map from the support of a fan to the cone $\widetilde{\mathfrak{B}}(H)$ over the Tits building of a connected reductive group $H$. 
Their main theorem establishes that, for a connected reductive group $H$, isomorphism classes of framed toric principal $H$-bundles on a toric variety are classified by integral piecewise linear maps. 
This classification recovers Klyachko's classification as the special case $H = \mathrm{GL}_r$, while also yielding analogous classification results for other classical groups. 
Several other recent works have also studied the category of toric principal bundles from various viewpoints (\cite{MR3453974},  \cite{MR4166679}, \cite{MR4960069} ).

The main objective of this paper is to build upon the framework established by Kaveh and Manon for toric varieties and extend this correspondence from toric varieties to toric Deligne--Mumford stacks. 
Toric Deligne--Mumford stacks, introduced by Borisov, Chen, and Smith \cite{BCS}, generalize the classical theory of toric varieties to the category of Deligne--Mumford stacks. 
They are constructed from stacky fans, which enrich the combinatorial data of a fan by assigning lattice vectors to its rays, thereby encoding the stack structure and its isotropy groups. 
Fantechi, Mann, and Nironi \cite{FMN} developed the geometric definition of smooth toric Deligne--Mumford stacks and showed that their construction is equivalent to the stacky-fan construction of Borisov, Chen, and Smith \cite{BCS}.
For a general theory of toric stacks, we refer the reader to \cite{ToricStacksI} and \cite{ToricStacksII}.
Classification results for equivariant vector bundles over toric stacks, analogous to Klyachko's classification, have been obtained in \cite[\S 9]{IS} and \cite{Singh2026}.
More recently, Kundu \cite{kundu2026fixed} developed a combinatorial description of torsion-free toric sheaves in arbitrary dimension on smooth toric Deligne--Mumford stacks, extending the work of Klyachko, Perling, and Kool.

Our main result provides a combinatorial classification of equivariant principal bundles over smooth toric Deligne--Mumford stacks in terms of piecewise linear maps adapted to the stacky fan data defining the stack.
This recovers the classification for toric varieties when the stack is representable and provides a unified framework for studying equivariant principal bundles on smooth toric Deligne--Mumford stacks. 
To state our main results precisely, we briefly recall the description of toric Deligne–Mumford stacks.
Let $\boldsymbol{\Sigma} = (N, \Sigma, \beta)$ be a stacky fan (\Cref{defn: stacky fan}), where $N$ is a finitely generated abelian group, $\Sigma$ is a rational simplicial fan in $N_{\Q} = N \otimes_{\Z} \Q$ with $n$ rays, and $\beta \colon \Z^n \to N$ is a group homomorphism. 
If $\rho_1,\dots,\rho_n$ are the rays of $\Sigma$ with primitive generators $v_{\rho_1},\dots,v_{\rho_n}$, respectively, then $\beta$ satisfies $\beta(e_i) = a_{\rho_i} v_{\rho_i}$ for some $ a_{\rho_i} \in \Z_{>0}$.
The morphism $\beta$ induces a homomorphism $\beta^{\vee} \colon G \to (\C^*)^n$, where $G$ is a diagonalizable group. 
See \cite[\S 2]{BCS} for the construction of the morphism $\beta^{\vee}$.
The toric Deligne--Mumford stack $\mathscr{X}(\boldsymbol{\Sigma})$ associated with the stacky fan $\boldsymbol{\Sigma}$ admits a presentation as a quotient stack $[Z/G]$, where $Z$ is a smooth quasi-affine toric variety in $\mathbb{C}^n$ with torus $(\mathbb{C}^*)^n$, and $G$ acts on $Z$ via the morphism $\beta^\vee$.
The quotient stack $\mathcal{T} = [(\mathbb{C}^{*})^n/G]$ determined by $\beta^\vee$ is the Deligne--Mumford torus associated to $\mathscr{X}(\boldsymbol{\Sigma})$ (see \cite[\S 2]{FMN}).
Moreover, the toric variety $X(\Sigma)$ is the coarse moduli space of $\mathscr{X}(\boldsymbol{\Sigma})$ \cite[Proposition 3.7]{BCS}, and the coarse moduli space $\mathbb{T}$ of $\mathcal{T}$ is the dense open torus of $X(\Sigma)$.

\begin{theorem*}
Let $H$ be a connected reductive algebraic group over $\C$. 
There is a one-to-one correspondence between the isomorphism classes of framed $\mathcal{T}$-equivariant principal $H$-bundles on $\mathscr{X}(\boldsymbol{\Sigma})$ and piecewise linear maps $\Phi \colon |\Sigma| \to \widetilde{\mathfrak{B}}(H)$ such that $\Phi(a_{\rho} v_{\rho})$ is a lattice point of the building, where $\widetilde{\mathfrak {B}}(H)$ is the cone over the Tits building of $H$.
\end{theorem*}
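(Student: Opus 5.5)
The plan is to reduce the stacky statement to the Kaveh--Manon classification on the toric variety attached to a modified fan, and to track integrality on rays separately.

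\textbf{Step 1 (affine charts).} Cover $\mathscr{X}(\boldsymbol{\Sigma})$ by the open substacks $\mathscr{X}(\boldsymbol{\sigma}) = [\C^{\sigma(1)}\times (\C^*)^{n-|\sigma(1)|}/G]$, one for each maximal cone $\sigma$. Each chart is a quotient of a smooth affine toric variety $U_\sigma$ by a finite abelian group. Concretely, $\mathscr{X}(\boldsymbol{\sigma}) \cong [\C^k/\mu]\times \mathcal{T}'$, where $\mu=N_\sigma/\langle a_\rho v_\rho\rangle$ together with the torsion of $N$.

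On such a chart I will prove the local statement. A framed toric $H$-bundle on $\mathscr{X}(\boldsymbol{\sigma})$ is the same as a framed, $\mathcal{T}$-equivariant $H$-bundle on the smooth toric variety $\C^k$, where the torus is $\widetilde T$ with cocharacter lattice $\widetilde N_\sigma=\bigoplus_{\rho\in\sigma(1)}\Z\,e_\rho$ mapped to $N$ by $\beta$.

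By Kaveh--Manon, or more simply by Biswas--Dey--Poddar for affine space, such bundles are trivial with a cocharacter $\lambda\colon \Gm^k\to H$. Equivalently, they correspond to a linear map on the cone spanned by the $e_\rho$ into an apartment of $\widetilde{\mathfrak B}(H)$ sending each $e_\rho$ to a lattice point. Transporting along $\beta_\Q$, this linear map becomes a linear map $\phi_\sigma$ on $\sigma$ with $\phi_\sigma(a_\rho v_\rho)$ integral. The framing fixes the identification of the generic fibre with $H$, and so pins down the element of the building.

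\textbf{Step 2 (descending through $G$).} I must check that the passage from $\widetilde T$-equivariant bundles on $Z$ to $\mathcal{T}$-equivariant bundles on $[Z/G]$ loses nothing. The key point is that $G$ acts through $\beta^\vee$ into the torus $(\C^*)^n$. Hence a $\mathcal{T}$-equivariant bundle on $[Z/G]$ is exactly a $(\C^*)^n$-equivariant bundle on $Z$: the $G$-linearization is the restriction of the torus action, which is how $\mathcal{T}$-equivariance is defined on the quotient stack.

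So framed toric $H$-bundles on $\mathscr{X}(\boldsymbol{\Sigma})$ are the same as framed $(\C^*)^n$-equivariant $H$-bundles on the toric variety $Z$, whose fan $\widetilde\Sigma$ lives in $\Q^n$ with cones $\mathrm{cone}(e_\rho:\rho\in\sigma(1))$. Kaveh--Manon applies to $Z$ and gives integral piecewise linear maps $\widetilde\Phi\colon|\widetilde\Sigma|\to\widetilde{\mathfrak B}(H)$.

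\textbf{Step 3 (matching the two sides).} Since $\Sigma$ is simplicial, $\beta_\Q$ restricts to a linear isomorphism from each cone of $\widetilde\Sigma$ onto the corresponding cone of $\Sigma$, and these isomorphisms glue to a homeomorphism $|\widetilde\Sigma|\to|\Sigma|$ that is linear on cones. Setting $\Phi=\widetilde\Phi\circ\beta_\Q^{-1}$ gives a piecewise linear map on $|\Sigma|$. Integrality of $\widetilde\Phi$ on the $e_\rho$ is exactly the condition that $\Phi(a_\rho v_\rho)$ is a lattice point. Piecewise linearity in the Kaveh--Manon sense, meaning each cone lands linearly in some apartment, is preserved because the cone-wise maps are linear isomorphisms. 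Bijectivity on isomorphism classes then follows from Kaveh--Manon for $Z$, using that the framing and the isomorphisms are compatible with the identification in Step 2.

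\textbf{Main obstacle.} I expect the difficulty to lie in Step 2 and in the framing. Kaveh--Manon is stated for normal toric varieties with the full torus acting, and $Z$ is a quasi-affine toric variety, so that part is fine. The real issue is that "$\mathcal{T}$-action on $\mathscr P$ lifting the action on $\mathscr X$" must be shown to be equivalent to a $(\C^*)^n$-linearization on $Z$. In particular, the 2-categorical data of an action of the Picard stack $\mathcal{T}$ has to strictify.

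My first attempt would be to pull back along $Z\to[Z/G]$ and use that $\mathcal{T}=[(\C^*)^n/G]$. Lifting the action morphism $\mathcal{T}\times\mathscr X\to\mathscr X$ to $(\C^*)^n\times Z$ should produce the $(\C^*)^n$-action, and the cocycle 2-isomorphisms become equalities because $H$-bundle automorphisms commuting with the action are rigid on the dense orbit. Once the framing at the identity point is fixed, no automorphisms remain, and this kills the ambiguity.
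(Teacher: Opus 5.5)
Your proposal is correct and is essentially the paper's proof. You reduce, via Theorem~\ref{thm-equivalence} (the equivalence between $\mathcal{T}$-equivariant bundles on $[Z/G]$ and $(\C^*)^n$-equivariant bundles on $Z$), to Kaveh--Manon on $Z$; you use smoothness of $\widetilde{\Sigma}$ so that integrality only has to be checked on the $e_\rho$; and you transport along the cone-wise isomorphisms $\widetilde{\sigma}\to\sigma$ given by simpliciality, with $e_\rho\mapsto a_\rho v_\rho$. Your Step~1 is not needed.

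For the step you flag as the main obstacle, you do not need rigidity on the dense orbit. After pulling back to the scheme $\mathscr{P}\times_{\mathscr{X}(\boldsymbol{\Sigma})}Z$, the coherence $2$-arrows relate morphisms of schemes, so the diagrams commute strictly; this is how the paper argues.
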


The theorem above follows from the description of the toric Deligne--Mumford stack $\mathscr{X}(\mathbf{\Sigma})$ as the quotient stack $[Z/G]$ and the following theorem, proved in \Cref{sec: action of picard stacks}.

\begin{theorem*}
Let $G^{\bullet}=[G^{-1} \xrightarrow{d} G^{0}]$ be a morphism of abelian group schemes over $S$ such that $G^{-1},\,G^0$ are flat and locally of finite presentation over $S$. 
% If we assume flat and locally of finite presentation, then Picard stack is an algebraic stack. Then we can use theorem of SGA Expose
Let $\mathcal{G}$ be the associated Picard stack. 
Let $X$ be a scheme over $S$ and let $G^{0}$ act on $X$ over $S$. 
Let $H$ be a group scheme over $S$.  
Then there is an equivalence of categories between the category of $\mathcal{G}$-equivariant principal $H$-bundles on $[X/G^{-1}]$ and the category of $G^0$-equivariant principal $H$-bundles on $X$.
\end{theorem*}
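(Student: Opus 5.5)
The plan is to reduce everything to descent along the quotient map $\pi\colon X \to [X/G^{-1}]$, which is a $G^{-1}$-torsor. Here $G^{-1}$ acts on $X$ through $d\colon G^{-1}\to G^0$. First I will make precise what a $\mathcal{G}$-action on $[X/G^{-1}]$ is. The Picard stack $\mathcal{G}=[G^0/G^{-1}]$ acts via the action groupoid: $G^0\times X \to X$ descends to a morphism $[G^0/G^{-1}]\times[X/G^{-1}] \to [X/G^{-1}]$, and the associativity and unit 2-isomorphisms come from the strict action on $X$.

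A $\mathcal{G}$-equivariant principal $H$-bundle $\mathscr{P}\to[X/G^{-1}]$ is then an $H$-bundle together with an isomorphism $\alpha\colon a^*\mathscr{P}\cong \pr_2^*\mathscr{P}$ over $\mathcal{G}\times[X/G^{-1}]$, subject to cocycle conditions, and commuting with $H$. The key observation is the following. Principal $H$-bundles on $[X/G^{-1}]$ are the same as $G^{-1}$-equivariant principal $H$-bundles on $X$; this is descent along the fppf cover $\pi$, using that $G^{-1}$ is flat and locally of finite presentation and that $H$-torsors form a stack. Similarly, data on $\mathcal{G}\times[X/G^{-1}]$ pull back to $(G^{0}\times G^{-1})\times(X\times G^{-1})$-equivariant data on $G^0\times X$.

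\textbf{Functor from $G^0$-bundles.} Given a $G^0$-equivariant $H$-bundle $P\to X$, restrict the action along $d$. This gives a $G^{-1}$-equivariant bundle, which descends to $\mathscr{P}$ on $[X/G^{-1}]$. The action isomorphism $\mu\colon \sigma^*P\cong \pr_2^*P$ on $G^0\times X$ is equivariant for the $G^{-1}\times G^{-1}$-action, because $G^0$ is abelian and the actions commute. So $\mu$ descends to $\alpha$ on $\mathcal{G}\times[X/G^{-1}]$, and the cocycle condition for $\mu$ descends to the one for $\alpha$.

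\textbf{Inverse functor.} Given $(\mathscr{P},\alpha)$, set $P=\pi^*\mathscr{P}$, which carries its $G^{-1}$-linearization. Pull $\alpha$ back along $G^0\times X\to \mathcal{G}\times[X/G^{-1}]$ to get an isomorphism $\mu\colon\sigma^*P\cong\pr_2^*P$ satisfying a cocycle condition on $G^0\times G^0\times X$, that is, a $G^0$-linearization.

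\textbf{Main obstacle.} The step I expect to be hardest is checking that the restriction of this $G^0$-linearization along $d$ recovers the given $G^{-1}$-linearization. This compatibility is exactly what the 2-categorical coherence data encode. The unit object of $\mathcal{G}$ and the canonical 2-morphism $d(g)\cong 0$ in $\mathcal{G}$ (for $g\in G^{-1}$) force $\mu|_{d(G^{-1})\times X}$ to agree with the descent datum. I would verify this by writing the action 2-cell explicitly through the cover $G^0\times X$. I will carry out the whole argument in the strictified language of equivariant objects on the groupoid $G^0\times G^{-1}\times X\rightrightarrows X$ to avoid coherence bookkeeping.

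\textbf{Morphisms and conclusion.} On morphisms, equivariant maps correspond under descent by the same full faithfulness of $\pi^*$. This gives quasi-inverse functors, hence the equivalence.
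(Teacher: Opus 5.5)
Your proposal is correct and has the same skeleton as the paper's proof: one direction quotients $P$ by $G^{-1}$ acting through $d$ (the paper's $\mathscr{P}=[P/G^{-1}]$), and the other pulls $\mathscr{P}$ back along $X\to[X/G^{-1}]$. What differs is the formalism.

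\emph{The paper's formalism.} It works directly with \Cref{def: equivariant bundle}. It writes down the $H$- and $\mathcal{G}$-actions on the prestack $[P/G^{-1}]^{\mathrm{pre}}$, where the relevant squares commute strictly, and then stackifies. It checks by hand that $[P/G^{-1}]\to[X/G^{-1}]$ is a principal bundle, including representability by schemes. For the converse it restricts the $\mathcal{G}$-action along $G^0\times P\to\mathcal{G}\times P$.

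\emph{Your formalism.} You recast $\mathcal{G}$-equivariance as a linearization $\alpha\colon a^*\mathscr{P}\cong\pr_2^*\mathscr{P}$ with cocycle and unit conditions. You then transport everything by descent along $X\to[X/G^{-1}]$ and $G^0\times X\to\mathcal{G}\times[X/G^{-1}]$, so every coherence condition becomes an equality of torsor isomorphisms checkable on a cover.

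\emph{What you still owe.} First, you must justify that \Cref{def: equivariant bundle} is equivalent to your $\alpha$-data. That definition is an action on the total space with 2-arrows $q$, $b$, $\mu_{\mathscr{P}}$, $\eta_{\mathscr{P}}$. The equivalence holds for two reasons:
\begin{itemize}
\item $\pi$ is representable, hence faithful, so the 2-arrows on $\mathscr{P}$ are unique lifts of those on $\mathscr{X}$. This is the same fact behind the paper's reduction of its three coherence diagrams to \eqref{diag equivariant bundle}.
\item $\mathcal{G}\times\mathscr{P}\to a^*\mathscr{P}$ is an isomorphism by \Cref{lemma: morphism of principal bundles is an isomorphism}.
\end{itemize}
Second, you need the descended total space to be representable by schemes. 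This is automatic for affine $H$ by descent of affine morphisms.

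\emph{What your route buys.} It forces into the open the step you flag as the main obstacle, which the paper's proof does not spell out. One must check that the $G^0$-action on $\mathscr{P}\times_{\mathscr{X}}X$, restricted along $d$, is the $G^{-1}$-torsor structure of $\mathscr{P}\times_{\mathscr{X}}X\to\mathscr{P}$. This is exactly what is needed for $[(\mathscr{P}\times_{\mathscr{X}}X)/G^{-1}]\simeq\mathscr{P}$, i.e.\ for the two constructions to be quasi-inverse.

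Your mechanism for this step is the right one. Take the 2-cell $(g,\mathrm{id}_x)$, where $g\colon 0\Rightarrow d(g)$ in $\mathcal{G}$. Under $a$ it goes to the morphism $g\colon x\to d(g)\cdot x$ of $[X/G^{-1}]$, which is the descent datum; under $\pr_2$ it goes to the identity. Naturality of $\alpha$ plus the unit axiom then identifies $\mu|_{d(G^{-1})\times X}$ with the descent datum. This is not a formality: for $X=S$ and $G^0=0$ it is precisely what forces the $G^{-1}$-linearization to be trivial.
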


As applications of our classification theorem, we obtain the following results, which generalize theorems of \cite{MR4767486} and \cite{MR4960069}. 

% \begin{theorem}
% The set of isomorphism classes of (framed) $\mathcal{T}$-equivariant line bundles over the stacky projective line $\mathbb{P}(a,b)$ is in bijection with the set $\Z \oplus \Z$.
% \end{theorem}

\begin{theorem*}
Let $H$ be a connected reductive algebraic group over $\C$, and let $\scrP$ be a $\mathcal{T}$-equivariant principal $H$-bundle over $\scrXE$.
Let $(\scrP_Z,p_0)$ be a framing of $\scrP$, and let $\Phi \colon |\Sigma| \to \widetilde{\mathfrak{B}}(H)$ be the corresponding piecewise linear map, see \Cref{thm: equivalence with piecewise linear maps}. 
If $\Aut_{\mathcal{T}}(\scrP)$ is the group of $\mathcal{T}$-equivariant automorphisms of $\scrP$ in the sense of \Cref{defn: equi morphism of principal bundles}, then
$$
    \Aut_{\mathcal{T}}(\scrP) 
        \cong \bigcap_{\rho \in \Sigma(1)} P_{\rho}\,,
$$
where $P_{\rho}$ is the parabolic subgroup in $H$ corresponding to the lattice point $\Phi(a_{\rho}v_{\rho}) \in \widetilde{\mathfrak{B}}_{\Z}(H)$. 
\end{theorem*}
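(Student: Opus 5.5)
We reduce the computation of $\Aut_{\mathcal{T}}(\scrP)$ to the toric variety setting on $Z$ and then apply the Kaveh--Manon description of automorphisms. By the equivalence of categories between $\mathcal{G}$-equivariant principal $H$-bundles on $[Z/G]$ and $(\C^*)^n$-equivariant principal $H$-bundles on $Z$ (the Picard stack theorem of \Cref{sec: action of picard stacks}, applied with $G^{-1}=G$, $G^0=(\C^*)^n$), equivariant automorphisms of $\scrP$ correspond bijectively and group-isomorphically to $(\C^*)^n$-equivariant automorphisms of $\scrP_Z$. So it suffices to compute $\Aut_{(\C^*)^n}(\scrP_Z)$ and to express the answer through $\Phi$.

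First, I will recall how $\Phi$ is built from $\scrP_Z$ in \Cref{thm: equivalence with piecewise linear maps}. $Z$ is the toric variety of the fan $\widetilde{\Sigma}$ in $\R^n$ whose cones are spanned by $e_i$, $i\in I$, for $\{\rho_i\}_{i\in I}$ spanning a cone of $\Sigma$. The framed bundle $(\scrP_Z,p_0)$ has, by Kaveh--Manon, a piecewise linear map $\widetilde{\Phi}\colon|\widetilde\Sigma|\to\B(H)$, and $\Phi$ is defined so that $\widetilde\Phi=\Phi\circ\beta_\R$. In particular $\widetilde\Phi(e_i)=\Phi(a_{\rho_i}v_{\rho_i})$. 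This is the point where the stacky multiplicities enter.

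Next, I will describe an equivariant automorphism of $\scrP_Z$. It is determined by its value on the framed fibre over the identity of the torus, $p_0\mapsto p_0 h$ for some $h\in H$. I will then show that $h$ defines an automorphism exactly when right multiplication by $h$ extends over each invariant affine chart $U_{\widetilde\sigma}$. For each cone, Kaveh--Manon give a trivialization $\scrP_Z|_{U_{\widetilde\sigma}}\cong U_{\widetilde\sigma}\times H$ with torus acting through a cocharacter/homomorphism $\phi_{\widetilde\sigma}$. The automorphism $h$ on the open torus is $t\mapsto \phi_{\widetilde\sigma}(t)^{-1}h\phi_{\widetilde\sigma}(t)$ up to the trivialization, and it extends over the codimension-one orbit $O(e_i)$ iff $\lim_{s\to0}\lambda_i(s)h\lambda_i(s)^{-1}$ exists, where $\lambda_i$ is the cocharacter representing $\widetilde\Phi(e_i)$. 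That is the defining condition of $h\in P(\lambda_i)$, the parabolic attached to the building point $\widetilde\Phi(e_i)$. Since $Z$ is smooth and normal and the complement of the union of the torus and the divisorial orbits has codimension $\ge 2$, extension over all rays gives extension everywhere by Hartogs (sections of the affine bundle $\scrP_Z\times^H H$ with adjoint action, or simply of the affine $H$-valued transition). Inverses behave the same way, so we get $\Aut_{(\C^*)^n}(\scrP_Z)\cong\bigcap_i P_{\rho_i}$.

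Finally, I will combine these steps. We have $P_{\rho_i}$ = the parabolic of $\widetilde\Phi(e_i)=\Phi(a_{\rho_i}v_{\rho_i})$, and the equivalence of categories transfers the result to $\scrP$. I expect the main obstacle to be the extension step. It requires checking carefully that the local trivializations given by $\widetilde\Phi$ on each maximal cone are compatible with the single framing $p_0$, so that the same $h$ is tested against each $\lambda_i$ rather than a conjugate. It also requires justifying the Hartogs extension for $H$-valued maps on the smooth affine charts $U_{\widetilde\sigma}\cong\C^k\times(\C^*)^{n-k}$.
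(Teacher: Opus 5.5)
Your proposal is correct and follows the same route as the paper. Both arguments transfer $\Aut_{\mathcal{T}}(\scrP)\cong\Aut_{(\C^*)^n}(\scrP_Z)$ via \Cref{thm-equivalence}, identify the equivariant automorphisms of $\scrP_Z$ over the smooth toric variety $Z$ with $\bigcap_{\rho}P_{\widetilde{\Phi}(e_\rho)}$, and conclude using $\widetilde{\Phi}(e_\rho)=\Phi(a_\rho v_\rho)$. The only difference is that the paper simply cites \cite[Theorem 4.2]{MR4767486} for the middle step, which you re-derive by hand. Your derivation works, including your framing concern: conjugating each $\phi_{\widetilde\sigma}$ by a constant makes the trivializations compatible with $p_0$, and only the equivalence class of $\lambda_i$ matters for $P_{\lambda_i}$. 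One sign needs fixing. With $t\cdot(x,g)=(tx,\phi_{\widetilde\sigma}(t)g)$ and $p_0=(z_0,e)$, the automorphism is left multiplication by $\phi_{\widetilde\sigma}(t)\,h\,\phi_{\widetilde\sigma}(t)^{-1}$, not $\phi_{\widetilde\sigma}(t)^{-1}h\,\phi_{\widetilde\sigma}(t)$. Your written formula would give the opposite parabolics, whereas the limit condition you state is the correct one.
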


\begin{theorem*}[Criterion for equivariant reduction of structure group]
Let $H$ be a connected reductive algebraic group over $\C$, and let $K$ be a closed connected reductive subgroup of $H$. 
A $\mathcal{T}$-equivariant principal $H$-bundle $\mathscr{P}$ over $\mathscr{X}(\boldsymbol{\Sigma})$ has an equivariant reduction of structure group to $K$ if and only if there exists a framing $(\mathscr{P}_Z,p_0)$ of $\mathscr{P}$ such that the image of the corresponding piecewise linear map $\Phi \colon |\Sigma| \to \B(H)$, satisfying $\Phi(a_{\rho}v_{\rho})$ is a lattice point of the building, lies in $\B(K)$.
\end{theorem*}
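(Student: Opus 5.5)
We reduce the criterion to the analogous statement for toric varieties (Kaveh--Manon), using the equivalence of categories between $\mathcal{T}$-equivariant principal bundles on $\scrXE=[Z/G]$ and $(\C^*)^n$-equivariant principal bundles on $Z$, together with \Cref{thm: equivalence with piecewise linear maps}. Let $\scrP$ correspond to the $(\C^*)^n$-equivariant $H$-bundle $\scrP_Z$ on the smooth toric variety $Z$. The fan of $Z$ is the fan $\widetilde\Sigma$ in $\R^n$ whose cones are spanned by $\{e_i : \rho_i\in\sigma\}$ for $\sigma\in\Sigma$. The map $\beta_\R\colon|\widetilde\Sigma|\to|\Sigma|$ is a linear isomorphism on each cone, sending $e_i\mapsto a_{\rho_i}v_{\rho_i}$. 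In the proof of the classification, the piecewise linear map $\widetilde\Phi\colon|\widetilde\Sigma|\to\B(H)$ attached by Kaveh--Manon to the framed bundle $(\scrP_Z,p_0)$ corresponds to $\Phi$ via $\widetilde\Phi=\Phi\circ\beta_\R$. Since $\beta_\R$ is a bijection $|\widetilde\Sigma|\to|\Sigma|$, we get $\operatorname{im}\Phi=\operatorname{im}\widetilde\Phi$. I will take this compatibility from the construction in \Cref{thm: equivalence with piecewise linear maps}.

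\textbf{Step 1: reductions correspond.} An equivariant reduction of $\scrP$ to $K$ is a $\mathcal{T}$-equivariant principal $K$-bundle $\mathscr{Q}$ with a $\mathcal{T}$-equivariant $K$-morphism $\mathscr{Q}\to\scrP$; equivalently, an isomorphism $\mathscr{Q}\times^K H\cong\scrP$. The equivalence of categories in the Picard-stack theorem is compatible with extension of structure group $-\times^K H$, because both sides are built by descent along $Z\to[Z/G]$ and extension of structure group commutes with pullback. Hence equivariant $K$-reductions of $\scrP$ correspond bijectively to $(\C^*)^n$-equivariant $K$-reductions of $\scrP_Z$. Framings also correspond: a point $p_0$ in the fibre of $\scrP_Z$ over the identity of $(\C^*)^n$ lies in $\mathscr{Q}_Z$ after a suitable choice in its $H$-orbit.

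\textbf{Step 2: apply the toric-variety criterion.} Kaveh--Manon prove that $\scrP_Z$ admits an equivariant reduction to $K$ if and only if some framing has $\operatorname{im}\widetilde\Phi\subset\B(K)$. Here $\B(K)\subset\B(H)$ via the inclusion of cocharacters modulo conjugation, which is well defined because $K$ is reductive. If I instead prove this directly, the argument runs as follows.

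For $(\Rightarrow)$: frame $\mathscr{Q}_Z$ at $p_0$. The Kaveh--Manon map of $\mathscr{Q}_Z$ records, for each cone, the cocharacter of $K$ (up to the appropriate parabolic) through which the torus acts on the fibre over the distinguished point. Composing with $K\hookrightarrow H$ gives the map of $\scrP_Z$, so its image lies in $\B(K)$.

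For $(\Leftarrow)$: if $\operatorname{im}\widetilde\Phi\subset\B(K)$, then $\widetilde\Phi$ is a piecewise linear map into $\B(K)$. One must check that $\widetilde\Phi$ is piecewise linear with respect to $K$, meaning each cone maps linearly into an apartment of $\B(K)$. By the classification for $K$ it then defines a framed toric $K$-bundle. Its extension to $H$ has the same piecewise linear map, hence is isomorphic to $\scrP_Z$ as a framed bundle.

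\textbf{Main obstacle.} The delicate point is exactly this linearity check: a cone mapping linearly into an apartment of $\B(H)$ with image in $\B(K)$ must map linearly into an apartment of $\B(K)$. I would handle it by noting that the images of the generators $e_i$ of a cone are pairwise compatible cocharacters. These commute and lie in $K$, so they generate a torus in $K$, which is contained in a maximal torus of $K$, giving an apartment of $\B(K)$ containing the whole image of the cone. Finally, integrality holds because $\Phi(a_\rho v_\rho)=\widetilde\Phi(e_\rho)$ is a lattice point in $\B(H)$ lying in $\B(K)$. Hence it is a cocharacter of $H$ with image in $K$, so it is a cocharacter of $K$ and thus a lattice point of $\B(K)$.
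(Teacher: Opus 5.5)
Your main line is the paper's proof. You pass between $\mathscr{P}$ and $\mathscr{P}_Z$ using \Cref{lemma: reduction of structure group}, apply the toric-variety criterion, and identify images via $\widetilde\Phi=\Phi\circ\beta_\R$; note that the criterion you need is Huang--Kaveh \cite[Theorem 5.4]{MR4767486}, not Kaveh--Manon.

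Your optional direct argument does not work as written. The representatives $\widetilde\Phi(e_i)\in\Lambda^\vee(T_\sigma)$ commute, but they need not take values in $K$. Representatives in $K$ of the classes $[\widetilde\Phi(e_i)]$ do exist, but only one at a time, and they need not commute. So "they generate a torus in $K$" does not follow. You would need an extra argument placing representatives of all these classes in a single torus of $K$, compatibly with the linear structure on $\widetilde\Phi(\sigma)$. That is precisely the nontrivial content you are otherwise taking from the citation.
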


As an application of the preceding theorem, we obtain the following equivariant splitting results for weighted projective stacks in \Cref{ex: low rank bundle splits over weighted projective stack} and \Cref{ex: equivariant bundle on stacky projective line}, respectively.

\begin{theorem*}
Every toric principal $\GL(r)$-bundle over the weighted projective stack $\mathbb{P}(w_0,\dots,w_n)$ splits equivariantly whenever $r<n$.
\end{theorem*}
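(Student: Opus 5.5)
The idea is to reduce the splitting statement, via the reduction criterion stated above, to a combinatorial statement about piecewise linear maps into the cone over the Tits building of $\GL(r)$. In the stacky fan of $\mathbb{P}(w_0,\dots,w_n)$ we have $N=\Z^{n+1}/\Z(w_0,\dots,w_n)$ (modulo torsion as appropriate). The rays $\rho_0,\dots,\rho_n$ have $\beta(e_i)=a_{\rho_i}v_{\rho_i}$, and these images satisfy the single relation $\sum_i w_i\,\beta(e_i)=0$. Every proper subset of $\{\rho_0,\dots,\rho_n\}$ spans a cone of $\Sigma$, and the maximal cones are the $n+1$ cones $\sigma_j$ generated by all rays except $\rho_j$.

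By \Cref{thm: equivalence with piecewise linear maps}, a toric principal $\GL(r)$-bundle $\scrP$ corresponds to a piecewise linear map $\Phi\colon|\Sigma|\to\B(\GL(r))$. Concretely, for each maximal cone $\sigma_j$ we have an apartment, that is, a frame (a splitting of $E=\C^r$ into lines $L_{j,1}\oplus\dots\oplus L_{j,r}$), such that $\Phi|_{\sigma_j}$ is linear into that apartment. Equivalently, in Klyachko language, we have filtrations $E^{\rho_i}(\bullet)$, one for each ray, such that the filtrations indexed by the rays of each maximal cone are simultaneously split by a common frame.

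The bundle splits equivariantly, i.e.\ admits an equivariant reduction to the maximal torus $K=(\C^*)^r$, if and only if, by the criterion, after changing the framing the whole image of $\Phi$ lies in one apartment $\B(K)$. This is equivalent to the existence of a single frame of $E$ adapted simultaneously to all $n+1$ filtrations $E^{\rho_0},\dots,E^{\rho_n}$. Once such a frame exists, $\Phi$ is linear on each cone into $\B(K)$, since on each cone it is determined by its values on rays; so it lands in $\B(K)$.

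The key step is therefore the following. Since $r<n$, i.e.\ $r\le n-1$, any collection of $n-1$ of the filtrations, and in particular any $n$ of them (those on the rays of one maximal cone), admits a common splitting. We must show that all $n+1$ filtrations admit one. I would prove this by showing that the filtrations admit a common splitting as soon as every two of them do and the collection is "small relative to $r$". The standard argument, as in the toric variety case \cite{MR4767486}, \cite{MR4960069}, goes as follows.

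Consider the distinct proper subspaces occurring in all the filtrations. Each filtration has at most $r-1$ nontrivial proper steps. The obstruction to a common splitting is a failure of distributivity of the lattice generated by the subspaces.

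Now consider $n$ of the filtrations, namely those indexed by $\rho_i$ with $i\neq j$. They are compatible, so they generate a distributive lattice. I would show that, since $r<n$, for the full family one can choose a ray $\rho_k$ whose filtration is "determined" by the others. More precisely, any subspace of $E^{\rho_k}$ lies in a distributive lattice generated together with the others, because a flag in $\C^r$ with $r\le n-1$ interacts nontrivially with at most $r$ other filtrations when one passes to a basis.

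Alternatively, and more cleanly, I would induct on $r$ by splitting off a line. The intersection of the smallest nonzero pieces of the filtrations along the rays of $\sigma_j$ is nonzero, and the lattice-of-subspaces argument (pick a basis vector in a minimal nonzero element of the distributive lattice generated by the $n$ filtrations not involving $\rho_j$, then check compatibility with $E^{\rho_j}$ using that $\rho_j$ together with any $n-1$ others also spans a cone) yields a common line $L$ and a complementary compatible subspace. The weights $a_{\rho}$ only rescale the values $\Phi(a_\rho v_\rho)$ and play no role in the subspace combinatorics.

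The main obstacle is exactly this linear-algebra step: producing a simultaneous splitting of $n+1$ filtrations of $\C^r$, any $n$ of which are compatible, when $r<n$. The first thing I would try is to quote the corresponding result for $\mathbb{P}^n$ in \cite{MR4960069}. That result is purely a statement about filtrations, and it transfers verbatim because the compatibility pattern of the rays (all proper subsets form cones) is identical for $\mathbb{P}(w_0,\dots,w_n)$ and for $\mathbb{P}^n$.
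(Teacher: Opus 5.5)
Your final route is correct and is essentially the paper's. Both arguments rest on the fact that the equivariant data for $\mathbb{P}(w_0,\dots,w_n)$ has exactly the combinatorics of $\mathbb{P}^n$; the paper uses this by pulling back to $Z=\C^{n+1}\setminus\{0\}$ (the same $Z$ that presents $\mathbb{P}^n=\mathbb{P}(1,\dots,1)$), applying \cite[Corollary 6.1.4]{Klyachko} to $\mathscr{P}_Z$, and transporting the splitting back with \Cref{lemma: reduction of structure group}, rather than by unpacking $\Phi$ into filtrations on $\Sigma$ as you do. Discard your self-contained linear-algebra sketches, though, since they are wrong as stated: any two filtrations of $\C^r$ always admit a common splitting, so a pairwise hypothesis carries no information, and the smallest nonzero pieces of filtrations split by a common frame can meet in $0$ (e.g.\ $\langle b_1\rangle$ and $\langle b_2\rangle$), so that, as in the paper, the whole weight rests on the cited $\mathbb{P}^n$ splitting theorem.
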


\begin{theorem*}
For any connected reductive algebraic group $H$, every toric principal $H$-bundle over the weighted stacky projective line $\mathbb{P}(a,b)$ splits equivariantly.    
\end{theorem*}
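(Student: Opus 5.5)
The plan is to deduce the statement from the reduction-of-structure-group criterion stated just above, applied with $K$ a maximal torus $T_H$ of $H$. A toric principal $H$-bundle splits equivariantly exactly when it has a $\mathcal{T}$-equivariant reduction to $T_H$. By the criterion, this happens if and only if, for some framing, the piecewise linear map $\Phi\colon|\Sigma|\to\B(H)$ (with $\Phi(a_\rho v_\rho)$ a lattice point) takes values in $\B(T_H)$. Here $\B(T_H)$ is an apartment, that is, a copy of the cocharacter space $\Lambda_{T_H}\otimes\R$ sitting inside $\B(H)$.

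So the whole task is to show that $\Phi(|\Sigma|)$ lies in a single apartment. For $\mathbb{P}(a,b)$ the stacky fan has $N=\Z$, two rays $\rho_0=\R_{\ge0}$ and $\rho_1=\R_{\le0}$, and $\beta(e_0)=a$, $\beta(e_1)=-b$. Hence $|\Sigma|=\R$ is the union of two one-dimensional cones. On each cone $\Phi$ is linear, so $\Phi(\rho_0)$ is the ray through the point $x=\Phi(a)$ and $\Phi(\rho_1)$ is the ray through $y=\Phi(-b)$. Both points lie in $\B(H)$.

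The key input is the standard building-theoretic fact that any two points of the cone over the Tits building lie in a common apartment. Equivalently, any two parabolic subgroups of $H$ contain a common maximal torus; this is a consequence of the Bruhat decomposition. Choose such a maximal torus $T_H$ with $x,y\in\B(T_H)$. Since $\B(T_H)$ is a vector space, and in particular a cone, the rays $\R_{\ge0}x$ and $\R_{\ge0}y$ lie in it. Therefore $\Phi(|\Sigma|)\subset\B(T_H)$.

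It remains to check the hypotheses of the criterion:
\begin{itemize}
\item $T_H$ is a closed connected reductive subgroup of $H$.
\item The lattice condition on $\Phi(a_\rho v_\rho)$ is unchanged, since lattice points of $\B(T_H)$ are exactly the lattice points of $\B(H)$ lying in it.
\item The framing can be taken to be the given one, since the criterion only asks for the existence of some framing.
\end{itemize}
Applying the criterion then gives a reduction of $\mathscr{P}$ to $T_H$. Finally, a $T_H\cong(\C^*)^r$-bundle is a product of equivariant line bundles, so $\mathscr{P}$ splits equivariantly.

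I expect the main obstacle to be making the "two points lie in a common apartment" statement precise in the convention for $\B(H)$ used by Kaveh--Manon, where points are pairs (cocharacter class, parabolic) modulo conjugation. I would handle this by citing the fact that any two parabolics contain a common maximal torus, and noting that a point $x$ lies in $\B(T_H)$ whenever $T_H$ is contained in the parabolic $P_x$ associated to $x$.
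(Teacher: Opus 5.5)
Your proposal is correct and is essentially the paper's argument. You apply the splitting criterion (equivariant reduction to a maximal torus) and note that the parabolic subgroups attached to the two rays $\Phi(\rho_1)$, $\Phi(\rho_2)$ contain a common maximal torus $T$; the paper gets this from the fact that two Borel subgroups share a maximal torus. Hence $\Phi(|\Sigma|)\subset\widetilde{A}_T$.

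One harmless slip: for $d=\gcd(a,b)>1$ the lattice of the stacky fan of $\mathbb{P}(a,b)$ is $\Z^2/\Z(a,b)\cong\Z\oplus\Z/d\Z$. Your data $N=\Z$, $\beta(e_0)=a$, $\beta(e_1)=-b$ therefore describes a different stack, a toric orbifold. Your argument only uses that $\Sigma$ consists of two opposite rays, so nothing changes.
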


%--------------------------------------------------------
%--------------------------------------------------------
\section{Preliminaries}
%--------------------------------------------------------
%--------------------------------------------------------

%--------------------------------------------------------
%--------------------------------------------------------
\subsection{Tits building}
\label{sec: Tits building}
\hfill
%\\ \vspace{-0.1em}
%--------------------------------------------------------
%--------------------------------------------------------

In this section, we recall the construction of the Tits building associated with a linear algebraic group. 
We will focus on the Tits building as a simplicial complex and its geometric realizations, culminating in the definition of the cone over the building and piecewise linear maps.
For a comprehensive treatment of buildings, see \cite{TitsBuilding}, \cite{BrownBuildings} and \cite{garrett1997buildings}.

% \subsubsection*{Buildings: Axiomatic Definition}
% \hfill

% We begin with the axiomatic definition of a building. 
% For a comprehensive treatment, see \cite{TitsBuilding}, \cite{BrownBuildings} and \cite{garrett1997buildings}.

% An \emph{$n$-dimensional building} $\Delta$ is an abstract simplicial complex together with a collection of distinguished subcomplexes, called \emph{apartments}, that satisfy the following axioms:
% \begin{enumerate}
% \item Every $k$-simplex of $\Delta$, with $k < n$, is contained in at least three $n$-simplices.

% \item Any $(n – 1)$-simplex in an apartment $A$ lies in exactly two adjacent $n$-simplices of $A$ and the graph of adjacent $n$-simplices is connected.

% \item Any two simplices of $\Delta$ lie in a common apartment.

% \item If two simplices are contained in both apartments $A_1$ and $A_2$, then there exists a simplicial isomorphism from $A_1$ onto $A_2$ that fixes the vertices of those simplices.
% \end{enumerate}
% An $n$-simplex in an apartment is called a \emph{chamber}, and the \emph{rank} of the building is defined to be $n + 1$.

% Intuitively, a building is a complex that is glued together from apartments, each of which is a Coxeter complex (a triangulation of a sphere by Weyl chambers). Axiom (4) ensures that the gluing is consistent.

%--------------------------------------------------------
%--------------------------------------------------------
\subsubsection*{The Tits building of a linear algebraic group}
\hfill 
%--------------------------------------------------------
%--------------------------------------------------------

Let $H$ be a linear algebraic group over $\C$.
The Tits building $\Delta(H)$ of $H$ is an abstract simplicial complex constructed from the proper parabolic subgroups of $H$.

\begin{itemize}
\item \textbf{Simplices and parabolic subgroups:}
There is a bijection between the simplices of $\Delta(H)$ and the proper parabolic subgroups of $H$. 
The correspondence is inclusion-reversing: if $P_1$ and $P_2$ are parabolic subgroups with $P_2 \subset P_1$, then the simplex $\Delta_{P_1}$ corresponding to $P_1$ is a face of the simplex $\Delta_{P_2}$ corresponding to $P_2$. 
Consequently, the maximal simplices (chambers) correspond to minimal parabolic subgroups, i.e., Borel subgroups of $H$.

\item \textbf{Apartments and maximal tori:}
Apartments in $\Delta(H)$ correspond to maximal tori of $H$. 
Specifically, for a maximal torus $T \subset H$, the associated apartment $A_T$ consists of all simplices $\Delta_P$ such that the parabolic subgroup $P$ contains $T$.

\item \textbf{Reduction to the semisimple case:} 
Since every parabolic subgroup contains the solvable radical $R(H)$ of $H$, it follows that the parabolic subgroups of $H$ are in bijection with those of the semisimple group $H/R(H)$. 
Hence, the building $\Delta(H)$ is canonically identified, as a simplicial complex, with the building $\Delta(H/R(H))$. 
Therefore, when studying the building, we may assume without loss of generality that $H$ is semisimple.
\end{itemize}

% \begin{remark}
% \label{rem: intersection of any two borels contains a maximal tori}
% For any linear algebraic group $H$, we have any two simplices in $\Delta(H)$ lie in a common apartment.
% Without loss of generality, we can assume that $H$ is semisimple.
% Since simplices correspond to parabolic subgroups, it is enough to show that any two simplices corresponding to Borel subgroups lie a common apartment.
% % This is because parabolic if and only if contained in a Borel subgroup.
% This follows from the fact that intersection of any two Borel subgroups in a reductive group contains a maximal tori, see \cite[Page 173 Corollary]{HumphreysLAG}.
% % Every semisimple group is reductive.
% \end{remark}

%--------------------------------------------------------
%--------------------------------------------------------
\subsubsection*{Geometric realization of apartments via cocharacter lattices}
\hfill
%--------------------------------------------------------
%--------------------------------------------------------

To work with the Tits building, it is useful to have a concrete geometric model. 
Fix a maximal torus $T$ of a connected reductive linear algebraic group $H$ over $\C$.
Let $\Phi(H,T)$ be the root system lying in the character lattice $\Lambda(T) = \operatorname{Hom}(T,\mathbb{G}_m)$.
%, and let the Weyl group $W = N_H(T)/T$ act on the real vector space $V = \Lambda(T) \otimes_{\mathbb{Z}} \mathbb{R}$.
Let $\Lambda^\vee(T) = \operatorname{Hom}(\mathbb{G}_m, T)$ denote the cocharacter lattice of $T$, and set
$$
    \widetilde{A}_T 
        = \Lambda^\vee_{\mathbb{R}}(T) 
        = \Lambda^\vee(T) \otimes_{\mathbb{Z}} \mathbb{R}.
$$
We refer to the vector space $\widetilde{A}_T$ as the \emph{extended apartment} corresponding to $T$.
% be the real vector space of dimension $r = \dim(T) = \operatorname{rank}(G)$.

\begin{itemize}
\item \textbf{Weyl Chambers and Apartments:} 
The (spherical) apartment $A_T$ corresponding to $T$ can be realized as the triangulation of the unit sphere $S_T$ in $\Lambda^\vee_{\mathbb{R}}(T)$ obtained by intersecting it with the (closures of the) Weyl chambers and their faces. 
% More concretely, the apartment is the spherical Coxeter complex associated to the root system. 
The chambers in $A_T$ correspond to the Weyl chambers in $\Lambda^\vee_{\mathbb{R}}(T)$.

\item \textbf{Parabolic Subgroups and Faces:} 
There is a one-to-one correspondence between the faces of the Weyl chambers in $\Lambda^\vee_{\mathbb{R}}(T)$ and the parabolic subgroups of $H$ that contain $T$. 
% A parabolic subgroup corresponds to the face that is the intersection of the walls of the Weyl chambers determined by the simple roots that generate the Levi factor of the parabolic.
\end{itemize}

%--------------------------------------------------------
%--------------------------------------------------------
\subsubsection*{The underlying space and the cone over the Tits building}
\hfill
%--------------------------------------------------------
%--------------------------------------------------------

Using the apartment realizations, we can glue the spherical apartments along common faces to obtain the \emph{underlying space of the Tits building of $H$}, denoted $\mathfrak{B}(H)$. 
This space is formed by gluing the spherical apartments $A_T$, for all maximal tori $T$ of $H$, along the simplices that correspond to the same parabolic subgroup. 
The result is a topological space $\mathfrak{B}(H)$ that is homeomorphic to the geometric realization of the simplicial complex $\Delta(H)$.

A related and often more convenient object is the \emph{cone over the Tits building of $H$}, denoted $\widetilde{\mathfrak{B}}(H)$. 
It is obtained by gluing the vector spaces $\Lambda^\vee_{\mathbb{R}}(T)$, for all maximal tori $T$ of $H$, along the faces of the Weyl chambers corresponding to the same parabolic subgroups.
\subsubsection*{One-parameter subgroups and the lattice points in the cone over the Tits building}
\label{sec: one-para-subgroups and lattice points}
\hfill
%--------------------------------------------------------
%--------------------------------------------------------

Define two one-parameter subgroups $\lambda_1,\lambda_2 \colon \mathbb{G}_m \to H$ to be equivalent if $\lim_{s \to 0} \lambda_1(s)\lambda_2(s)^{-1}$ exists in $H$, i.e., the morphism 
$$
    \lambda_1\lambda_2^{-1} \colon \mathbb{G}_m \to H
$$ 
extends to a regular morphism from the affine line $\mathbb{A}^1$ to $H$.
Given a one-parameter subgroup $\lambda \colon \mathbb{G}_m \to H$, we associate a parabolic subgroup
$$
    P_{\lambda} 
        = \left\{h \in H 
        \,\big\vert\, \lim_{s \to 0} \lambda(s) h \lambda(s)^{-1} \text{ exists in } H\right\} \,,
$$
see \cite[Section 2.2, Definition 2.3/Proposition 2.6]{GIT}.
% where $\lim_{s \to 0} \lambda(s) h \lambda(s)^{-1}$ exists in $H$ means that the morphism $\lambda h \lambda^{-1} \colon \mathbb{G}_m \to H$, given by $s \mapsto \lambda(s) h \lambda(s)^{-1}$, extends to regular morphism from the affine line $\mathbb{A}^1$ to $H$.
We note that the image $\lambda(\mathbb{G}_m)$ is a closed subgroup of $P_{\lambda}$.
The parabolic subgroup $P_{\lambda}$ can also be described as follows:
$$
    P_{\lambda} 
        = \left\{ h \in H 
        \mid h \lambda h^{-1} \sim \lambda \right\}.
$$
It is straightforward to see that if $\lambda_1$ is equivalent to $\lambda_2$, then $P_{\lambda_1} = P_{\lambda_2}$.

The following proposition was established in \cite[Proposition 1.8]{MR4492498}. 
We present an alternative proof that is more elementary.

\begin{proposition}
Let $\lambda_1, \lambda_2 \colon \mathbb{G}_m \to H$ be two one parameter subgroups.
Then the following are equivalent:
\begin{enumerate}
\item[(1)] $\lambda_1$ is equivalent to $\lambda_2$.

\item[(2)] There exists $g \in P_{\lambda_1}$ such that $g \lambda_1 g^{-1} = \lambda_2$.
\end{enumerate}
\end{proposition}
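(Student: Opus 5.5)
For (2) $\Rightarrow$ (1), suppose $g \in P_{\lambda_1}$ and $\lambda_2 = g\lambda_1 g^{-1}$. We write
$$
\lambda_1(s)\lambda_2(s)^{-1} = \lambda_1(s) g \lambda_1(s)^{-1} g^{-1}.
$$
Since $g \in P_{\lambda_1}$, the limit $\lim_{s\to 0}\lambda_1(s) g\lambda_1(s)^{-1}$ exists in $H$. Multiplying on the right by the constant $g^{-1}$ preserves the existence of the limit, so $\lambda_1 \sim \lambda_2$.

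For (1) $\Rightarrow$ (2), the plan is to reduce to a common maximal torus. Both images lie in $P_{\lambda_1}$ and $P_{\lambda_2}$ respectively, and these are equal by the remark preceding the proposition. A connected one-dimensional torus $\lambda_i(\mathbb{G}_m)$ lies in a maximal torus of the connected group $P := P_{\lambda_1}$. We also use that $\lambda_i$ lies in the Levi subgroup $L_{\lambda_i} = Z_H(\lambda_i)$. Since $P = L_{\lambda_1} \ltimes U_{\lambda_1}$ with $U_{\lambda_1}=\{h : \lim \lambda_1(s)h\lambda_1(s)^{-1} = e\}$, the Levi subgroups of $P$ are all $U_{\lambda_1}$-conjugate. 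Hence there is $u \in U_{\lambda_1}\subset P$ with $uL_{\lambda_2}u^{-1} = L_{\lambda_1}$.

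Replace $\lambda_2$ by $\lambda_2' = u\lambda_2 u^{-1}$, which is still equivalent to $\lambda_1$ by the forward computation above, since $u \in P_{\lambda_2}=P$. Then $\lambda_2'$ is central in $L_{\lambda_1}$, because $\lambda_2$ is central in $L_{\lambda_2}$. Pick a maximal torus $T \subset L_{\lambda_1}$; it contains both $\lambda_1$ and $\lambda_2'$, the latter being central in $L_{\lambda_1}$. Now $\mu = \lambda_1\lambda_2'^{-1}$ is a cocharacter of $T$, and $\lim_{s\to0}\mu(s)$ exists in $H$.

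The key claim is that a cocharacter of a torus whose limit at $0$ exists in the affine group $H$ must be trivial. Under the closed embedding $T\subset H$ the limit lies in the closed subgroup $T$, a torus, and a nontrivial cocharacter $\mu\colon \mathbb{G}_m\to T$ has no limit in $T$. Thus $\lambda_1 = \lambda_2' = u\lambda_2u^{-1}$, so $\lambda_2 = u^{-1}\lambda_1 u$ with $g=u^{-1}\in P_{\lambda_1}$.

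The main obstacle I expect is justifying $P_{\lambda_1}=P_{\lambda_2}$ together with the Levi-decomposition facts. These are that $Z_H(\lambda)$ is a Levi factor of $P_\lambda$ and that Levi factors are conjugate under the unipotent radical. I would cite these from standard references (e.g.\ Springer, or \cite{GIT}) rather than reprove them. I would also double-check that commuting $\lambda_1,\lambda_2'$ indeed lie in a common torus. Here I use that $\lambda_2'$ is central in the connected reductive $L_{\lambda_1}$, hence contained in every maximal torus of $L_{\lambda_1}$.
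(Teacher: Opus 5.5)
Your proof is correct, but for (1)$\Rightarrow$(2) it takes a heavier route than the paper. The paper never uses Levi decompositions. With $P = P_{\lambda_1} = P_{\lambda_2}$, it picks maximal tori $T_i \subset P$ containing $\lambda_i(\mathbb{G}_m)$. Conjugacy of maximal tori in $P$ then gives $g \in P$ with $gT_1g^{-1} = T_2$. Now $g\lambda_1 g^{-1} \sim \lambda_1 \sim \lambda_2$, and both $g\lambda_1 g^{-1}$ and $\lambda_2$ are cocharacters of $T_2$. So they coincide by the same lemma you prove at the end: equivalent cocharacters of one torus are equal. Your own sentence placing $\lambda_i(\mathbb{G}_m)$ in a maximal torus of $P$ is exactly this starting point, and you then leave it unused.

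Your detour through $P = Z_H(\lambda_1)\ltimes U_{\lambda_1}$, $U_{\lambda_1}$-conjugacy of Levi factors, and centrality of $\lambda$ in $Z_H(\lambda)$ is valid over $\mathbb{C}$ for connected reductive $H$. Its mild payoff is that the conjugating element lies in the unipotent radical $U_{\lambda_1}$. That refinement also follows from the paper's argument: write $g = ul$ with $u \in U_{\lambda_1}$ and $l \in Z_H(\lambda_1)$, and note that $l$ centralizes $\lambda_1$.

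Two small remarks:
\begin{enumerate}
\item The step you flag as the main obstacle, $P_{\lambda_1} = P_{\lambda_2}$, is elementary. If $c(s) = \lambda_2(s)\lambda_1(s)^{-1}$ extends to $\mathbb{A}^1$, then $\lambda_2(s)h\lambda_2(s)^{-1} = c(s)\,\bigl(\lambda_1(s)h\lambda_1(s)^{-1}\bigr)\,c(s)^{-1}$, which gives one inclusion; symmetry gives the other.
\item To put $\lambda_1$ itself inside $T$, you need that $\lambda_1$ is also central in $L_{\lambda_1} = Z_H(\lambda_1)$. Alternatively, use that the commuting tori $\lambda_1(\mathbb{G}_m)$ and $\lambda_2'(\mathbb{G}_m)$ generate a torus. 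Either fact is true, but you do not state it.
\end{enumerate}
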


\begin{proof}
$(1) \implies (2)$ Since $\lambda_1 \sim \lambda_2$, we have $P_{\lambda_1} = P_{\lambda_2}$. 
Denote this common parabolic subgroup by $P$.
Since $\lambda_i(\mathbb{G}_m)$ is a closed subgroup of $P$, it is contained in a maximal torus of $P$.
Let $T_i$ be a maximal tori of $P$ containing $\lambda_i(\mathbb{G}_m)$.
Since any two maximal tori are conjugate, there exists $g \in P$ such that $gT_1g^{-1} = T_2$.
Consider the one-parameter subgroup $g \lambda_1 g^{-1}$.
Then $g \lambda_1 g^{-1} \sim \lambda_1$ since $g \in P = P_{\lambda_1}$.
Hence, $g \lambda_1 g^{-1} \sim \lambda_2$.
Moreover, $g \lambda_1 g^{-1}$ and $\lambda_2$ can be considered as one-parameter subgroups of $T_2$.
But, two one-parameter subgroups of a torus are equivalent if and only if they are equal.
Hence, $g \lambda_1 g^{-1} = \lambda_2$.

$(2) \implies (1)$ Since $g \in P_{\lambda_1}$, it follows that $\lambda_1 \sim g\lambda_1g^{-1} = \lambda_2$.
\end{proof}

Let $T$ be a maximal torus of $H$, and let $P$ be a parabolic subgroup of $H$ containing $T$.
Then
$$
    \Lambda_{P}^{\vee}(T) 
        = \left\{ \lambda \in \Lambda^{\vee}(T) \mid P_{\lambda} = P \right\}
$$
gives the face corresponding to $P$ in the extended apartment $\widetilde{A}_T$.
In other words, $\Lambda_{P}^{\vee}(T)$ consists precisely of those cocharacters of $T$ that determine the parabolic subgroup $P$.
An immediate consequence of the above proposition is the following.
If $T_1$ and $T_2$ are maximal tori of $H$ contained in a common parabolic subgroup $P$ of $H$, then there exists a unique $\Z$-linear isomorphism between $\Lambda^{\vee}(T_1)$ and $\Lambda^{\vee}(T_2)$ such that the induced $\R$-linear isomorphism between $\Lambda^\vee_{\mathbb{R}}(T_1)$ and $\Lambda^\vee_{\mathbb{R}}(T_2)$ maps the face corresponding to $P$ in $\Lambda^\vee_{\mathbb{R}}(T_1)$ onto the face corresponding to $P$ in $\Lambda^\vee_{\mathbb{R}}(T_2)$, see \cite[Proposition 1.10]{MR4492498}.

The set of \emph{lattice points in the cone over the Tits building of $H$}, denoted $\widetilde{\mathfrak{B}}_{\mathbb{Z}}(H)$, is the subset of $\widetilde{\mathfrak{B}}(H)$ obtained by taking the union of the lattices $\Lambda^\vee(T) \subset \Lambda^\vee_{\mathbb{R}}(T)$, for all maximal tori $T$ of $H$, with the understanding that if two maximal tori $T_1$ and $T_2$ are contained in a common parabolic subgroup $P$, then a lattice point in $\Lambda^\vee(T_1)$ is identified with its image in $\Lambda^{\vee}(T_2)$ under the canonical isomorphism $\Lambda^\vee(T_1) \cong \Lambda^\vee(T_2)$ induced by $P$.
Equivalently, $\widetilde{\mathfrak{B}}_{\mathbb{Z}}(H)$ is the subset of $\widetilde{\mathfrak{B}}(H)$ obtained by gluing the lattices $\Lambda^{\vee}(T) \subset \Lambda^\vee_{\mathbb{R}}(T)$, for all maximal tori $T$ of $H$ along these identifications.
Hence, $\widetilde{\mathfrak{B}}_{\Z}(H)$ can be identified with the set of equivalence classes of one-parameter subgroups of $H$, see \cite[Corollary 1.11]{MR4492498}.
Under the above identification, every element of $\B_{\Z}(H)$ determines a parabolic subgroup of $H$.

\begin{remark}
We emphasize that the assumption that $H$ is reductive is essential for the definition of the cone over the Tits building of $H$.
Indeed, the Weyl chamber decomposition of the extended apartment $\widetilde{A}_T$ relies on the existence of the root system $\Phi(H,T)$.
Which is defined when $H$ is reductive.
Furthermore, the definition of $P_{\lambda}$ as a parabolic also requires 
$H$ to be reductive, see \cite[Section 2.2, Definition 2.3/Proposition 2.6]{GIT}.
Hence, throughout this paper, we work only with the cone over the Tits building associated with connected reductive groups.
\end{remark}

% In view of \cite[Proposition 1.10]{MR4492498}, we can define the cone over the Tits building and its lattice points for an arbitrary linear algebraic group $H$ via equivalence classes of one-parameter subgroups of $H$.
% This description of the cone over the Tits building is the one used in \cite{MR4492498} and \cite{MR4767486}.

%--------------------------------------------------------
%--------------------------------------------------------
\subsubsection*{Functoriality of the cone over the Tits building}
\hfill
%--------------------------------------------------------
%--------------------------------------------------------

If $\alpha \colon H \to H'$ is a morphism of linear algebraic groups, then it induces a map  $\widehat{\alpha} \colon \B_{\Z}(H) \to \B_{\Z}(H')$, defined by $\widehat{\alpha}([\lambda]) = [\alpha \circ \lambda]$, for any one-parameter subgroup $\lambda \colon \mathbb{G}_m \to H$.
This extends to a map $\widehat{\alpha} \colon \B(H) \to \B(H')$ as follows. 
Suppose $T$ is a maximal torus in $H$. 
Then $\alpha(T)$ is a torus in $H'$, and hence is contained in a maximal torus, say $T'$, of $H'$.
If $\lambda \colon \mathbb{G}_m \to T$ is a one-parameter subgroup in $\Lambda^{\vee}(T)$, then $\alpha \circ \lambda \colon \mathbb{G}_m \to T'$ is a one-parameter subgroup in $\Lambda^{\vee}(T')$.
We note that $\Z$-linear map $\Lambda^{\vee}(T) \to \Lambda^{\vee}(T')$, given by $\lambda \mapsto \alpha \circ \lambda$, is the restriction of $\widehat{\alpha} \colon \B_{\Z}(H) \to \B_{\Z}(H')$, since no two one-parameter subgroups in $\Lambda^{\vee}(T)$ and in  $\Lambda^{\vee}(T')$ are equivalent.
Moreover, the map $\Lambda^{\vee}(T) \to \Lambda^{\vee}(T')$ extends to an $\R$-linear map $\Lambda^{\vee}_{\R}(T) \to \Lambda^{\vee}_{\R}(T')$.
This gives us a map from $\bigsqcup_{T} \Lambda_{\R}^{\vee}(T) \to \B(H')$, where $T$ varies over all maximal tori of $H$.
% Now it is enough to show that the lattice points corresponding to a parabolic subgroup $P$ contained in two different maximal tori $T_1$ and $T_2$ of $H$ maps to the same element in $\B(H)$.
Suppose $T_1$ and $T_2$ are maximal tori of $H$ contained in a common parabolic subgroup $P$ of $H$.
Then $T_2 = hT_1 h^{-1}$ for some $h \in P$.
Let $T_1'$ be the maximal torus of $H'$ such that $\alpha(T_1) \subset T_1'$, and let $T_2' = \alpha(h) T_1 \alpha(h)^{-1}$.
Then $\alpha(T_2) \subset T_2'$.
Consider the diagram
\[
% https://tikzcd.yichuanshen.de/#N4Igdg9gJgpgziAXAbVABwnAlgFyxMJZABgBpiBdUkANwEMAbAVxiRAB12AZOgWwCModAHrBONGDAC+ACgAqAfQCMAShBTS6TLnyEUZJVVqMWbTjwFDR4ybMUAmNRq3Y8BIkvJH6zVog7cfIIiYuwS0vLKAOROmiAYrroepIbUPqb+5kFWoeF2CvYx6kYwUADm8ESgAGYAThC8SGQgOBBInsa+ZuwA7liwABZ0OMADUiDUDHT8MAwACtpueiC1WGUDOOpxdQ1N1K1I9mkmfgF9g8OhjGhD484gO42IHQeIAMzHXZm9-TBDI5xrrctjV6k8ji02u9Phkzr9-lcGDc6DIBio7hQpEA
\begin{tikzcd}
\Lambda^{\vee}(T_1) \arrow[d, "\widehat{h}"'] \arrow[r, "\widehat{\alpha}"] & \Lambda^{\vee}(T_1') \arrow[d, "\widehat{\alpha(h)}"] \\
\Lambda^{\vee}(T_2) \arrow[r, "\widehat{\alpha}"]                           & \Lambda^{\vee}(T_2')                                 
\end{tikzcd}
\]
where $\widehat{h} \colon \Lambda^{\vee}(T_1) \to \Lambda^{\vee}(T_2)$ is the isomorphism, given by $\lambda \mapsto h \lambda h^{-1}$ (see\cite[Proposition 1.10]{MR4492498}), and $\widehat{\alpha(h)}$ defined similarly.
Since the $\R$-linear isomorphism $\widehat{h} \colon \Lambda^{\vee}_{\R}(T_1) \to \Lambda^{\vee}_{\R}(T_2)$ takes the face corresponding to $P$ in $\Lambda^{\vee}_{\R}(T_1)$ onto the face corresponding to $P$ in $\Lambda^{\vee}_{\R}(T_2)$ and the diagram above commutates, it follows that we get a well-defined map $\widehat{\alpha} \colon \B(H) \to \B(H')$.

We note that if $K$ is a closed subgroup of $H$, then the induced map $\B(K) \to \B(H)$ is injective.
Indeed, suppose that $\lambda_1,\lambda_2 \colon \mathbb{G}_m \to K$ are one-parameter subgroups of $K$ such that $j \circ \lambda_1 \sim_H j \circ \lambda_2$, where $j \colon K \hookrightarrow H$ is the inclusion homomorphism.
That is, $j \circ (\lambda_1\lambda_2^{-1}) = (j \circ \lambda_1)(j \circ \lambda_2)^{-1}$ extends to a regular function from the affine line $\mathbb{A}^1 \to H$.
Since $K$ is a closed subgroup, it follows that the extension factors through $K$.
% Use $f(\overline{A}) \subseteq \overline{f(A)}$.
Hence, $\lambda_1 \sim_K \lambda_2$.

%--------------------------------------------------------
%--------------------------------------------------------
\subsubsection*{Piecewise Linear Maps into the Cone}
\hfill
%--------------------------------------------------------
%--------------------------------------------------------

Let $N$ be a lattice, and let $\Sigma$ be a fan in the $\mathbb{R}$-vector space $N_{\mathbb{R}}:= N \otimes_{\Z} \R$ with support $|\Sigma|$.

\begin{definition}[Piecewise linear map]
A map $\Phi \colon | \Sigma | \to \widetilde{\mathfrak{B}}(H)$ is said to be a \emph{piecewise linear map} if it satisfies the following two conditions:
\begin{enumerate}
\item For each cone $\sigma \in \Sigma$, there exists a maximal torus $T_{\sigma}$ of $H$ such that $\Phi(\sigma) \subset \Lambda^{\vee}_{\mathbb{R}}(T_{\sigma})$.

\item For each cone $\sigma \in \Sigma$, the restriction $\left.\Phi\right|_{\sigma} \colon \sigma \to \Lambda^{\vee}_{\mathbb{R}}(T_{\sigma})$ is $\mathbb{R}$-linear.
\end{enumerate}
We say that $\Phi$ is \emph{integral}, if for each cone $\sigma$, the restriction $\left.\Phi\right|_{\sigma}$ induces a $\Z$-linear map 
$$
    \left.\Phi\right|_{\sigma} \colon \sigma \cap N \to \Lambda^{\vee}(T_{\sigma}),
$$
that is, $\Phi$ sends lattice points to lattice points.
\end{definition}

% In other words, $\Phi$ is locally linear with respect to a choice of apartment (i.e., a maximal torus) on each cone, and the linear maps on overlapping cones are compatible with the gluing that defines $\widetilde{\mathfrak{B}}(H)$. 

%--------------------------------------------------------
%--------------------------------------------------------
\subsection{Toric principal $H$-bundles}
\hfill
% \\ \vspace{-0.4em}
%--------------------------------------------------------
%--------------------------------------------------------

Let $H$ be a linear algebraic group over $\C$.
A principal $H$-bundle over a scheme $X$ is a scheme $P$ equipped with a map $\pi \colon P \to X$ satisfying the following conditions:
\begin{enumerate}
\item the morphism $\pi$ is flat, locally of finite presentation, and surjective.
    
\item the scheme $P$ is equipped with a right $H$-action $\rho \colon P \times H \to P$ such that the following diagram commutes:
\[
% https://tikzcd.yichuanshen.de/#N4Igdg9gJgpgziAXAbVABwnAlgFyxMJZABgBpiBdUkANwEMAbAVxiRAAUACAHW7wFt4nABIgAvqXSZc+QigCM5KrUYs27cZJAZseAkTLzl9Zq0QdNU3bKKKj1E2vMANXqQB045TCgBzeESgAGYAThD8SGQgOBBIiiqmbLxoIQD68pYgoeGR1DFIAEwOqmYgvCEAFrHUDHQARjAM7NJ6ciAhWL4VOJnZEYhF0bGIAMzFiebJWL1h-fH5o+NOZdxo02IUYkA
\begin{tikzcd}
P \times H \arrow[r, "\pr_1"] \arrow[d, "\rho"'] & P \arrow[d, "\pi"] \\
P \arrow[r, "\pi"]                               & {X\,.}            
\end{tikzcd}
\]

\item the map $(\pr_1,\rho) \colon P \times H \to P \times_X P$ is an isomorphism.
\end{enumerate}

Suppose $P$ is a principal $H$-bundle over $X$ and $Q$ is a principal $H'$-bundle over $X$.
A \emph{morphism of principal bundles with respect to a group homomorphism of linear algebraic groups} $\alpha \colon H \to H'$ is a map $F \colon P \to Q$ such that the following diagrams commutes:
\[
% https://tikzcd.yichuanshen.de/#N4Igdg9gJgpgziAXAbVABwnAlgFyxMJZABgBpiBdUkANwEMAbAVxiRAAUQBfU9TXfIRQAmclVqMWbAIrdeIDNjwEiARlKrx9Zq0QgAGnL5LBRACxjq2qXvYACADoO8AW3h2AEkYX9lQ5BaaVpK6HN6KAiooAKwaWiEyTqQAdOG+pjGWEjoyjs5YbnCeAOTc4jBQAObwRKAAZgBOEC5IZCA4EEjq2TYgAGLejc2t1B1Ioj2hTmhYAPqc1Ax0AEYwDOzpUSANWJUAFjiDTS2I3WOIE9ZTDjOzsjz1x0gAzKOdiBaTbE4NexDzIEWKzWGxMWx2+0ODxAQxOn3OsS+egG0NhSAAbG8kIirt8HL9-vd5GjEK92u9MUj+nlXO4nIw0Hs6GUuEA
\begin{tikzcd}
P \arrow[rr, "F"] \arrow[rd, "\pi_P"'] &   & Q \arrow[ld, "\pi_Q"] &  & P \times H \arrow[d, "\rho_P"'] \arrow[r, "F \times \alpha"] & Q \times H' \arrow[d, "\rho_Q"] \\
                                       & X &                       &  & P \arrow[r, "F"]                                             & {Q\,.}                         
\end{tikzcd}
\]

Let $\Sigma$ be a fan in an $\R$-vector space $N_{\R}$, and let $X(\Sigma)$ be the toric variety associated with $\Sigma$, with torus $\mathbb{T}$. 
Let $x_0 \in X$ be a fixed point in the open orbit of the $\mathbb{T}$-action on $X(\Sigma)$. 
It identifies the torus $\mathbb{T}$ with the open orbit via $t \mapsto t\cdot x_0$.
We say that a principal $H$-bundle over $X(\Sigma)$ is a \emph{toric principal $H$-bundle} if $P$ is equipped with a left $\mathbb{T}$-action such that the actions of $\mathbb{T}$ and $H$ commute, and $\pi$ is a $\mathbb{T}$-equivariant morphism.
A \emph{framed toric principal $H$-bundle} over $X(\Sigma)$ is a toric principal $H$-bundle $P$ equipped with a distinguished point $p_0$ in the fiber $P_{x_0}$; such a pair is denoted by $(P, p_0)$.
Specifying $p_0 \in P_{x_0}$ is equivalent to choosing an $H$-equivariant isomorphism between the fiber $P_{x_0}$ and $H$, where $H$ acts on itself by right multiplication.
A \emph{morphism of toric principal bundles} is a morphism of principal bundles (with respect to a group homomorphism as above) that is also $\mathbb{T}$-equivariant.
A \emph{morphism of framed toric principal bundles} $(P,p_0) \to (Q,q_0)$ is a morphism of toric principal bundles which maps $p_0 \in P_{x_0}$ to $q_0 \in Q_{x_0}$.

\begin{remark}
\label{rem: change of framing}
Suppose $(P,p_0)$ is a framed toric principal $H$-bundle over $X(\Sigma)$.
If $\alpha_{h_0} \colon H \to H$ is the conjugation homomorphism, given by $h \mapsto h_0^{-1}hh_0$, then the morphism $F \colon (P,p_0) \to (P,p_0 \cdot h_0)$, given by $F(p) = p \cdot h_0$, is a morphism of framed toric principal $H$-bundles with respect to the morphism $\alpha_{h_0}$, since
$
    F(p \cdot h)
        = (p \cdot h) \cdot h_0
        = (p \cdot h_0) \cdot (h_0^{-1}hh_0)
        = F(p) \cdot \alpha_{h_0}(h).
$
\end{remark}

\begin{definition}[Equivariant triviality]
Let $P$ be a toric principal $H$-bundle $P$ over $X(\Sigma)$.
\begin{enumerate}

\item[(1)] We say $P$ is \emph{equivariantly trivial} if there exists a $\mathbb{T}$-equivariant isomorphism of principal $H$-bundle between $P$ and a trivial toric principal bundle $X(\Sigma) \times H$, where $\mathbb{T}$ acts on $X(\Sigma) \times H$ by
$$
    t \cdot (x,h) = (t \cdot x , \phi(t)h)\,,
$$
for some algebraic group morphism $\phi \colon \mathbb{T} \to H$.

\item[(2)] We say $P$ is \emph{locally equivariantly trivial} if for any cone $\sigma \in \Sigma$, the restriction $\left.P\right|_{U_{\sigma}}$ is equivariantly trivial, where $U_{\sigma}$ denotes the affine open subset in $X(\Sigma)$ corresponding to the cone $\sigma \in \Sigma$.
\end{enumerate}
\end{definition}

\begin{remark}
In \cite[Lemma 2.8]{BDP2016}, it is shown that over $\C$, every toric principal $H$-bundle over $X(\Sigma)$ is locally equivariantly trivial.
In contrast, \cite[Theorem 2.5]{MR4492498} proves the same result over an arbitrary field, provided that $H$ is reductive.
\end{remark}

The following theorem is the main classification result for toric principal bundles over a toric variety, proved in \cite[Theorem 2.2]{MR4492498}.

\begin{theorem}
\label{thm: main-theorem kaveh-manon}
Let $X(\Sigma)$ be a toric variety and $H$ be a connected reductive algebraic group over $\C$.
Then there is a one-to-one correspondence between the isomorphism classes of framed toric principal $H$-bundles over $X(\Sigma)$ and the integral piecewise linear maps $\Phi \colon |\Sigma| \to \B(H)$.

Moreover, let $\alpha \colon H \to H'$ be a homomorphism of connected reductive algebraic groups over $\C$.
Let $P$ (respectively $Q$) be a framed toric principal $H$-bundle (respectively $H'$-bundle) over $X(\Sigma)$ with corresponding piecewise linear maps $\Phi \colon |\Sigma| \to \B(H)$ (respectively $\Phi' \colon |\Sigma| \to \B(H')$).
Then there exists a morphism of framed toric principal bundles $P \to Q$, which is equivariant with respect to $\alpha$, if and only if $\Phi' = \widehat{\alpha} \circ \Phi$, where $\widehat{\alpha} \colon \B (H) \to \B (H')$ is the map induced by $\alpha$ between the cones over the Tits buildings.
\end{theorem}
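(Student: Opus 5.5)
The statement is Kaveh--Manon's classification, quoted as \cite[Theorem 2.2]{MR4492498}. Here is how I would prove it. The first step is local equivariant triviality (\cite[Lemma 2.8]{BDP2016}, or \cite[Theorem 2.5]{MR4492498}). For each cone $\sigma\in\Sigma$ it gives an equivariant trivialization $P|_{U_\sigma}\cong U_\sigma\times H$, with $\mathbb{T}$ acting through a homomorphism $\phi_\sigma\colon\mathbb{T}\to H$. Using the framing $p_0$, I would normalize each trivialization so that $p_0$ corresponds to $(x_0,h_\sigma)$ for some $h_\sigma \in H$. Comparing the $\mathbb{T}$-action on the open orbit then identifies the $\mathbb{T}$-action on $P|_{\mathbb{T}x_0}\cong \mathbb{T}\times H$ (trivialized via $p_0$) with $t\cdot(x_0,e)=(tx_0,\phi'_\sigma(t))$, where $\phi'_\sigma=h_\sigma^{-1}\phi_\sigma h_\sigma$.

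Next I would define $\Phi$ on $\sigma\cap N$ by sending $v$ to the class $[\phi'_\sigma\circ\lambda_v]\in\B_\Z(H)$, where $\lambda_v$ is the cocharacter of $\mathbb{T}$ given by $v$. The key claim is that this class is intrinsic. It equals the class of the one-parameter subgroup $\mu_v\colon s\mapsto$ (the unique $h(s)$ with $\lambda_v(s)\cdot p_0=p_0\cdot h(s)$ over the orbit), up to equivalence. Moreover, $\lim_{s\to0}\lambda_v(s)x_0$ exists in $U_\sigma$ for $v\in\sigma$, so the section $s\mapsto\lambda_v(s)p_0\,\mu_v(s)^{-1}$ extends over $s=0$. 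This shows that changing the trivialization by an equivariant gauge transformation $g\colon U_\sigma\to H$ changes $\phi'_\sigma\circ\lambda_v$ by left multiplication by $g(\lambda_v(s)x_0)^{\pm1}$, and that factor has a limit as $s\to 0$. Hence the equivalence class is preserved. As a consequence, the maps agree on overlaps $\sigma\cap\tau$ and $\Phi$ is well defined.

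Piecewise linearity comes from the images: $\phi'_\sigma(\mathbb{T})$ lies in a maximal torus $T_\sigma$, so on $\sigma$ the map $\Phi$ is the restriction of the $\Z$-linear map $N\to\Lambda^\vee(T_\sigma)$, $v\mapsto\phi'_\sigma\circ\lambda_v$, extended $\R$-linearly. This gives an integral piecewise linear map. If we change the framing by an isomorphism of framed bundles, the classes are unchanged, so $\Phi$ depends only on the isomorphism class.

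For injectivity, suppose two framed bundles give the same $\Phi$. On each $\sigma$ the cocharacters $\phi'_\sigma$ and $\psi'_\sigma$ are then equivalent on every ray generator. The Proposition above (with simultaneous conjugation inside a common torus of the relevant parabolics) should give $g_\sigma\in P$ with $g_\sigma\phi'_\sigma g_\sigma^{-1}=\psi'_\sigma$, and the gauge map $t\mapsto \psi'_\sigma(t)^{-1} g_\sigma\phi'_\sigma(t)$ should extend over $U_\sigma$. Gluing these local isomorphisms, which all agree on the open orbit because they fix $p_0$, gives a global isomorphism. For surjectivity, given $\Phi$ I would choose $\phi_\sigma\colon\mathbb{T}\to T_\sigma$ realizing $\Phi|_\sigma$ and glue the trivial bundles $U_\sigma\times H$ along the open orbit via the canonical identifications. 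The main obstacle is checking that these transition maps are regular on $U_\sigma\cap U_\tau=U_{\sigma\cap\tau}$ and satisfy the cocycle condition. I would handle this by reducing to the face $\sigma\cap\tau$, where the two cocharacter systems are equivalent, and applying the same extension argument as in the injectivity step.

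The functoriality statement then follows formally. Composing with $\alpha$ sends $\phi'_\sigma$ to $\alpha\circ\phi'_\sigma$, and the induced map on classes is $\widehat\alpha$. Conversely, if $\Phi'=\widehat\alpha\circ\Phi$, then the pushforward bundle $P\times^{H,\alpha}H'$ has the same piecewise linear map as $Q$, so it is isomorphic to $Q$ by injectivity. The morphism $P\to Q$ is the canonical map $P\to P\times^{H,\alpha}H'$ followed by that isomorphism.
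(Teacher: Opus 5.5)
Your route is the one the paper sketches, following Kaveh--Manon: you take framing-normalized local equivariant trivializations, read off $\Phi|_\sigma$ from $\phi'_\sigma$, and glue trivial charts via $\phi_\tau(t)\phi_\sigma(t)^{-1}$. Your gauge-change computation for well-definedness is correct. (Your description of $\mu_v$ through $\lambda_v(s)\cdot p_0=p_0\cdot h(s)$ compares points in different fibres; what you actually use is that $s\mapsto\lambda_v(s)\cdot p_0\cdot\mu_v(s)^{-1}$ extends over $s=0$.)

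\textbf{The gap is in injectivity}, and you reuse that step for surjectivity. Two things go wrong.
\begin{enumerate}
\item $\Phi|_\sigma$ only sees the subtorus $\mathbb{T}_\sigma$ with cocharacter lattice $N_\sigma=\mathrm{span}(\sigma)\cap N$. In transverse directions $\phi'_\sigma$ is not an invariant of the framed bundle. For $\sigma=\{0\}$ we have $U_\sigma=\mathbb{T}$, and the gauge $g(t)=\phi(t)$ shows that every homomorphism $\phi\colon\mathbb{T}\to H$ occurs for the trivial framed bundle. So $g_\sigma\phi'_\sigma g_\sigma^{-1}=\psi'_\sigma$ on all of $\mathbb{T}$ cannot be expected.
\item Even granting that identity, your gauge map $t\mapsto\psi'_\sigma(t)^{-1}g_\sigma\phi'_\sigma(t)$ is just the constant $g_\sigma$. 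It sends $p_0$ to $q_0\cdot g_\sigma$, which depends on $\sigma$. So the local maps neither fix the framing nor agree on the open orbit, and nothing glues.
\end{enumerate}

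\textbf{What is missing.} A framed isomorphism is forced on the open orbit to be $t\cdot p_0\mapsto t\cdot q_0$. Hence in the normalized $\sigma$-charts its gauge map must be $t\mapsto\psi'_\sigma(t)\phi'_\sigma(t)^{-1}$. The whole content of injectivity is that this function extends over $U_\sigma$; the same statement, with $\phi_\sigma,\phi_\tau$ on $U_{\sigma\cap\tau}$, is what gives regularity of your transition maps. Write $P_\rho=P_{\phi'_\sigma\circ\lambda_{v_\rho}}$ and $Q_\sigma:=\bigcap_{\rho\in\sigma(1)}P_\rho$. The proof needs two facts.
\begin{enumerate}
\item[(a)] For $k\in H$, the map $t\mapsto\phi'_\sigma(t)\,k\,\phi'_\sigma(t)^{-1}$ extends to $U_\sigma$ if and only if $k\in Q_\sigma$. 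To see this, compare weights in a faithful representation: the characters indexing the nonzero entries of $k$ must lie in $\sigma^\vee$.
\item[(b)] Take maximal tori $T_1\supseteq\phi'_\sigma(\mathbb{T})$ and $T_2\supseteq\psi'_\sigma(\mathbb{T})$. Both lie in $Q_\sigma^\circ$, so some $g\in Q_\sigma$ conjugates $T_1$ onto $T_2$. Then $g\,\phi'_\sigma\lambda_{v_\rho}g^{-1}$ and $\psi'_\sigma\lambda_{v_\rho}$ are equivalent cocharacters of the same torus $T_2$, hence equal. This gives $\psi'_\sigma=g\phi'_\sigma g^{-1}$ on $\mathbb{T}_\sigma$ only.
\end{enumerate}

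Now set $\phi''=g^{-1}\psi'_\sigma g$ and $c(t)=\phi''(t)\phi'_\sigma(t)^{-1}$. Then
$\psi'_\sigma(t)\phi'_\sigma(t)^{-1}=g\cdot c(t)\cdot\phi'_\sigma(t)\,g^{-1}\phi'_\sigma(t)^{-1}$.
\begin{itemize}
\item The function $c$ is invariant under $\mathbb{T}_\sigma$-translation. It is therefore pulled back from $\mathbb{T}/\mathbb{T}_\sigma$, which receives a morphism from $U_\sigma$, so $c$ extends.
\item The last factor extends by (a), since $g^{-1}\in Q_\sigma$.
\end{itemize}

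With this lemma, the cocycle condition in surjectivity is automatic, because it holds on the dense orbit. The rest of your outline then goes through, including functoriality via $P\times^{H,\alpha}H'$.
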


The idea of the proof of above theorem is as follows. 
Let $\Phi\colon |\Sigma|\to \widetilde{\mathfrak{B}}(H)$ be an integral piecewise linear map. 
For each cone $\sigma\in\Sigma$, the restriction $\left.\Phi\right|_{\sigma}$ determines a homomorphism $\phi_\sigma\colon \mathbb{T}_{\sigma}\to H$, where $\mathbb{T}_{\sigma}$ denotes the stabilizer of the unique orbit $O_{\sigma}$ corresponding to $\sigma$.
We then extend it to a homomorphism $\phi_\sigma\colon \mathbb{T} \to H$. 
On each affine toric chart $X_\sigma$, consider the trivial principal $H$-bundle $\mathcal{P}_\sigma = X_\sigma \times H$, where $\mathbb{T}$ acts diagonally on $X_\sigma$ in the usual way and on $H$ via $\phi_\sigma$. 
For two cones $\sigma,\sigma'\in\Sigma$ with $\tau=\sigma\cap\sigma'$, define the transition function $\psi_{\sigma,\sigma'}\colon X_\tau\to H$ on the open torus orbit by  
\[
    \psi_{\sigma,\sigma'}(t\cdot x_0)
        =\phi_{\sigma'}(t)\phi_\sigma(t)^{-1}.
\]
The compatibility condition on $\Phi$ ensures that $\psi_{\sigma,\sigma'}$ extends to a regular map on all of $X_\tau$; moreover, the transition functions satisfy the cocycle condition. 
Thus the local bundles $\mathcal{P}_\sigma$ glue together to give a framed toric principal $G$-bundle $\mathcal{P}_\Phi$. 
Conversely, given a locally equivariantly trivial framed toric principal $H$-bundle $\mathcal{P}$, the equivariant local trivializations yield homomorphisms $\phi_\sigma$, and hence integral linear maps on the cones of $\Sigma$ to $\B(H)$.
After choosing the trivializations compatibly with the framing, these maps agree on intersections as maps to $\widetilde{\mathfrak{B}}(H)$, and therefore glue together to give an integral piecewise linear map $\Phi_{\mathcal{P}}\colon |\Sigma|\to \widetilde{\mathfrak{B}}(H)$. 
Finally, one checks that the two constructions are inverse to each other, establishing the desired bijection.

%--------------------------------------------------------
%--------------------------------------------------------
\section{Preliminaries on action of a Picard stack} 
%--------------------------------------------------------
%--------------------------------------------------------

%--------------------------------------------------------
%--------------------------------------------------------
\subsection{Picard stacks}
%--------------------------------------------------------
%--------------------------------------------------------

Throughout the article, by a stack over a scheme $S$ we mean a stack on the site $({\rm Sch}/S)_{\rm \acute{e}tale}$  of $S$-schemes with \'etale topology \cite[Definition 4.6.1] {Olsson}. 
Recall that a Picard stack over $S$ is a stack $\mathcal{G}$ over $S$ equipped with a composition law given by morphism of stacks 
$$
    m \colon \mathcal{G}\times_S \mathcal{G} \to \mathcal{G}
$$
along with an associativity $2$-arrow and a commutativity $2$-arrow satisfying some compatibility conditions, see \cite[Expose XVIII, Definition 1.4.2]{SGA4} and \cite[Definition B.1]{FMN}. 
By \cite[Expose XVIII, 1.4.4]{SGA4}, there exists a couple $(e,\epsilon)$, where $e \colon S \to \mathcal{G}$ is a neutral section and $\epsilon \colon e \cdot e \Rightarrow e$ is a $2$-arrow. 
In particular, a Picard stack over $S$ is a group stack in the sense of \cite[Definition 3.1.1]{Breen-Theorie}. 
Any Picard stack over $S$ can be constructed as follows \cite[Expose XVIII, \S 1.4.11 and Proposition 1.4.15]{SGA4}: 
Let $G^{\bullet}=[G^{-1}\xrightarrow{d} G^{0}]$ be a complex of sheaves of abelian groups on $S$. 
% On $S$'' in the last line mean ``on the site $\mathrm{Sch}/S$ with \'etale topology
The Picard stack $\mathcal{G}$ associated to $G^{\bullet}$ is the stackification of the prestack $\mathcal{G}^{\rm pre}$ given by
\begin{itemize}
  \item[(I)] For $ U \to S $, we have $ \text{Ob}(\mathcal{G}^{\rm pre}(U)) = G^0(U) $.
  
  % \item[(II)] If $ x, y \in G^0(U) $, a morphism from $ x $ to $ y $ is an element $ f \in G^{-1}(U) $ such that $ df = y - x $.

  \item[(II)] If $ g_0, g_0' \in G^0(U) $, a morphism from $ g_0 $ to $ g_0' $ is an element $ g_1 \in G^{-1}(U) $ such that $ dg_1 = g_0' - g_0 $.
  
  \item[(III)] The composition law for morphisms is given by addition in $ G^{-1}(U) $.
  
  \item[(IV)] The functor %$ + $
  $\mathcal{G}^{\rm pre} \times_S \mathcal{G}^{\rm pre} \to \mathcal{G}^{\rm pre}$ is defined by the addition laws in $ G^0(U) $ and $ G^{-1}(U) $.
  
  \item[(V)] The associativity and commutativity isomorphisms are the identity morphisms, represented by the zero element of $G^{-1}(U)$.
\end{itemize}

% {\color{red}{FMN in remark 1.12 only states that the stack mentioned above is the quotient stack $[G^0/G^{-1}]$  for $G^0, G^{-1}$ diagonalizable groups, why? Deligne's construction is for any homomorphism of any sheaf of abelian groups, and it seems to me that that construction is nothing but the quotient stack.}}

% \textcolor{blue}{Yes. I also agree this is the quotient stack. In the construction of $[X/G^{-1}]$ below, if we replace $X$ by $G^{0}$, we will get back $[G^{0}/G^{-1}]$. The equation 
% $$
%     \sigma(d(f),x)=y \text{ becomes } d(f)+x = y \iff d(f) = y-x.
% $$
% This is because now the $\sigma \colon G^{0} \times G^{0} \to G^{0}$ is the ``addition'' map, since $G^0$ is a sheaf of abelian groups. The extra structure comes since $G^{0}$ is a sheaf of abelian groups. }

%--------------------------------------------------------
%--------------------------------------------------------
\subsection{Action of Picard stacks}
\label{sec: action of picard stacks}
%--------------------------------------------------------
%--------------------------------------------------------

Since a Picard stack $\mathcal{G}$ over $S$ is a group stack, we have the notion of $\mathcal{G}$ acting on a stack $\mathscr X$ over $S$ consisting of the following data: 
\begin{itemize}
    \item a morphism of $S$-stacks called an action map:
    \begin{align*}
        a \colon \mathcal{G} \times_S \mathscr X & \to \mathscr X \\
        (g,x) & \to g \cdot x
    \end{align*}

    \item an associativity $2$-arrow 
    $
    \mu \colon % a \circ (m \times \mathrm{id}_{\mathscr{X}}) \implies a \circ (\mathrm{id}_{\mathcal{G}} \times a)
    $
\begin{equation}
\label{diag: action of a picard stack}
% https://tikzcd.yichuanshen.de/#N4Igdg9gJgpgziAXAbVABwnAlgFyxMJZABgBpiBdUkANwEMAbAVxiRAB12BbOnACwDGjYAHEAvgAJOeLvAD6AZSndeg4eOUz5Szj35wBAJ2AANMSDGl0mXPkIoyARiq1GLNrtVCGoydKyycIrKenwGxmYWViAY2HgERABM5C70zKyIHCr83r6aAdohquGm5pbWcXZJpM7Uae6ZnvpGpRYuMFAA5vBEoABmhhBcSGQgOBBIjnVuGSBc+YFF-IZcwFhQYnLATWEtZubUDHQARjAMAAo28fYghlidfDhR-YPDiFNjE4gAzNPpbHRniABkMRtRxkhkq5-o1snwVmsNlsdrlxH52Fo4BJAeVga9IeCvr9oQ0QDjoiC3h8IYgoQwsGBZlAIExjgxWNQ+DA6FAkGAmAwGOC6FgGGxIIyQH9SbomG0xEA
\begin{tikzcd}
\mathcal{G} \times_S \mathcal{G} \times_S \mathscr{X} \arrow[d, "m \times \mathrm{id}_{\mathscr{X}}"'] \arrow[rr, "\mathrm{id}_{\mathcal{G}} \times a"] &  & \mathcal{G} \times_S \mathscr{X} \arrow[d, "a"] \\
\mathcal{G} \times_S \mathscr{X} \arrow[rr, "a"] \arrow[rru, "\mu", Rightarrow,shorten <=8pt,  shorten >=8pt]                                                                         &  & \mathscr{X},                                    
\end{tikzcd}
\end{equation}
also denoted by $\mu \colon (g_1\cdot g_2) \cdot x \to g_1 \cdot (g_2 \cdot x)$.

    \item an identity $2$-arrow 
$
    \eta \colon 
    % a \circ (e \times \mathrm{id}_{\mathscr{X}}) \implies \mathrm{id}_{\mathscr{X}}
$ 
\[
% https://tikzcd.yichuanshen.de/#N4Igdg9gJgpgziAXAbVABwnAlgFyxMJZABgBpiBdUkANwEMAbAVxiRAB12BbOnACwDGjYAHEAvgAJOeLvAD6AZSndefOAIBOwABpiQY0uky58hFGQCMVWoxZtOPfuq279hkBmx4CRC+Wv0zKyIHCpOmjp6BkZepr6kVtSBdiH61jBQAObwRKAAZhoQXEh+IDgQSGQ2QWwwyjLwyo58GlzAWFBicsAOqs6RUe4FRZXU5UgATEm2wSB0bvmFxYil44hT1Smhza3tnd294S5ietQMdABGMAwACsbeZiAaWJl8OAsgw8tVawDMZ1gwLMoBAmBcGKxqHwYHQoEgwEwGAwxnQsAw2JAgSBpjUQpwYDh5mIKGIgA
\begin{tikzcd}
\mathcal{G} \times_S \mathscr{X} \arrow[r, "a"] \arrow[rd, "\eta", swap, Rightarrow, pos= .1, shorten >=40pt]                       & \mathscr{X} \\
\mathscr{X} \arrow[u, "e \times \mathrm{id}_{\mathscr{X}}"] \arrow[ru, "\mathrm{id}_{\mathscr{X}}"'] & {}         ,
\end{tikzcd}
\]
also denoted by $\eta \colon e \cdot x \to x$.
% , which induces an isomorphism $a(e,x) = x$ where $e \colon S\to \mathcal{G}$ is the identity section \cite[Remark B.7]{FMN}.
\end{itemize} 
These data satisfy some compatibility relations, see \cite[Definition B.12]{FMN} and \cite[\S 3.4]{aldrovandi-NoohiII}.

The key example that we are going to consider in this section is the following: Suppose that we have a complex $G^{\bullet}=[G^{-1}\xrightarrow{d} G^0]$ of sheaves of abelian groups on $S$. 
Let $X$ be scheme over $S$ and let $G^0$ act on $X$ via an action map $\sigma \colon G^0\times X\to X$. 
Then we have an action of the Picard stack $\mathcal{G}$ associated to $G^{\bullet}$ on the quotient stack $[X/G^{-1}]$ as follows: 
Let us denote by $[X/G^{-1}]^{\rm pre}$ the quotient prestack (\cite[\href{https://stacks.math.columbia.edu/tag/044O}{Tag 044O}]{stacks-project}). 
Recall that 
\begin{comment}
\begin{itemize}
    \item[(I)] For $U\to S$, we have {\rm Ob}($[X/G^{-1}]^{\rm pre}(U))=X(U)$.
    \item[(II)] For $x,y\in X(U)$ a morphism from $x$ to $y$ is given by an element $f\in G^{-1}(U)$ such that $f\cdot x:=\sigma(d(f),x)=y$.
\end{itemize}
\end{comment}
\begin{itemize}
    \item[(I)] $\mathrm{Ob}([X/G^{-1}]^{\rm pre})$ consists of pairs $(U,x)$ where $U$ is a scheme over $S$ and $x$ is an $S$-morphism from $U$ to $X$. 
    In particular, we have {\rm Ob}($[X/G^{-1}]^{\rm pre}(U))=X(U)$.
    
    \item[(II)] A morphism $(U,x) \to (U',x')$ consists of a pair $(f,g)$ where $f \colon U \to U'$  is a $S$-morphism and $g_1' \in G^{-1}(U')$ such that $g_1' \cdot x := \sigma(dg_1',x) = x' \circ f$.
    In particular, for $x,x' \in X(U)$ a morphism $x \to x'$ is given by an element $g_1' \in G^{-1}(U)$ such that 
    $$
        g_1' \cdot x := \sigma(d(g_1'),x) = x'.
    $$
\end{itemize}
Note that we have a morphism of prestacks
  \begin{equation}
  \label{eqn:action on prestacks}
        \mathcal{G}^{\rm pre}\times_S [X/G^{-1}]^{\rm pre}\to [X/G^{-1}]^{\rm pre}
  \end{equation}
  
  \begin{itemize}
  \item[(I)] On objects it is defined as follows: For $U\to S$ and $g_0\in G^0(U), x\in X(U)$ we define 
  $$
    (g,x) \mapsto \sigma(g,x).
    $$
  
  \item[(II)] On morphisms it is defined as follows: For $U\to S$, $g_0,g_0'\in G^0(U)$ and $g_1 \in G^{-1}(U)$ such that $d(g_1)=g_0'-g_0$ (i.e., $g_1$ is a morphism from $g_0$ to $g_0'$) and $x,x' \in X(U)$ with $g_1'\in G^{-1}(U)$ such that $g_1'\cdot x=x'$ (i.e., $g_1'$ is a morphism from $x$ to $x'$) we define 
  $$
    (g_1,g_1') \mapsto g_1g_1'.
  $$ 
  It can be checked easily that $g_1g_1'$ is a morphism from $g_0x \to g_0'x'$.
  
  \item[(III)] The associativity and identity isomorphisms are provided by the zero element of $ G^{-1}(U) $.
  \end{itemize} 
Since stackification commutes with products \cite[\href{https://stacks.math.columbia.edu/tag/04Y1}{Tag 04Y1}]{stacks-project}, by stackifiying the morphism (\ref{eqn:action on prestacks}) we get an action of $\mathcal{G}$ on the stack $[X/G^{-1}]$. 
Note that the associativity and identity isomorphism and the compatibility conditions follow from \cite[\href{https://stacks.math.columbia.edu/tag/04W9}{Tag 04W9}]{stacks-project}.

We now define the notion of a $\mathcal{G}$-equivariant morphism, following \cite[Eqn.~(6.1.5)]{Breen-bitorseur}:

\begin{definition}
\label{defn: equivariant map}
Let $\mathscr{X},\mathscr{Y}$ be two stacks over $S$ with an action of a Picard stack $\mathcal{G}$. 
Let $a_{\mathscr X}, a_{\mathscr{Y}}$ be the corresponding action maps. 
A \emph{$\mathcal{G}$-equivariant map} from $\mathscr{X}$ to $\mathscr{Y}$ consists of the following data:
    \begin{itemize}
        \item a morphism of $S$-stacks 
        $
            \pi \colon \mathscr{X}\to \mathscr{Y}
        $
        
        \item a $2$-arrow $q$:
% $q \colon a_{\mathscr Y}\circ (\mathrm{id}_{\mathcal{G}} \times \pi) \implies \pi\circ a_{\mathscr{X}}$
\begin{equation}
\label{diag: equivariant map 2-isomorphism}
% https://tikzcd.yichuanshen.de/#N4Igdg9gJgpgziAXAbVABwnAlgFyxMJZABgBpiBdUkANwEMAbAVxiRAB12BbOnACwDGjYAHEAvgAJOeLvCndefOAIBOwABpiQY0uky58hFAEZyVWoxZtOPfsrWbtukBmx4CRMsfP1mrRBwK-EIMopLSWLJw8rZKqsAAmlo6em6GRKbe1L5WATaK9onJ5jBQAObwRKAAZioQXEhkIDgQSKYWfmx0APrA+XbxmsnOtfWN1C1IAEzZlv6BsSpcwFhQYr39gsLi4ewycmgg1Ax0AEYwDAAK+u5GICpYZXw4TjV1DYgzza2IAMyznQCPT6QTiaiSwzeY0Q7UmfwBuRcrxAow+Xzh7QYWDA8ygECYpwYrGofBgdCgSDATAYDAmdCwDDYkBxRw6iIAjqyTucrjd0gEidUXmIKGIgA
\begin{tikzcd}
\mathcal{G} \times \mathscr{X} \arrow[r, "a_{\mathscr{X}}"] \arrow[d, "\mathrm{id}_{\mathcal{G}} \times \pi"'] & \mathscr{X} \arrow[d, "\pi"] \\
\mathcal{G} \times \mathscr{Y} \arrow[r, "a_{\mathscr{Y}}"] \arrow[ru, "q", Rightarrow, shorten <=8pt, shorten >=8pt]                      & \mathscr{Y}   ,      
\end{tikzcd}
\end{equation}
also denoted by $q \colon g\cdot \pi(x) \to \pi(g \cdot x)$.
\end{itemize}
These data must satisfy the following conditions:
\begin{enumerate}
    \item For every chart $U$,  objects $g,g' \in \mathcal{G}(U)$ and $x \in \mathscr{X}(U)$, the following diagram commutes
\begin{equation}
\label{diag equivariant 1}
% https://tikzcd.yichuanshen.de/#N4Igdg9gJgpgziAXAbVABwnAlgFyxMJZABgBpiBdUkANwEMAbAVxiRAAoBzAAgB1eAxlAg5unAOQBKPoOGj+aLOwAekkAF9S6TLnyEUARnJVajFm078hIruKtyFS1dP50ATsjeleIS7JHcAI4+FBpaIBjYeAREAEzG1PTMrIi+9jaOtumizmHaUXpEZAYmSeapmVzZYlLVqnkROtH6yPEliWYpIJl+1ji2Mn3cueomMFCc8ESgAGZuEAC2SGQgOBBIRqbJbPwLTAD6wLt0OAAWcAJu3ACa6iDUDHQARjAMAApNhakMMDM4DXNFstqGskABmDrbVJBe4gR4vd6fGKpNxYTinf6aWbzJaIeKrdaIAAskPKIECsPhrw+BWRcN+mPCgNxEIJSBJWzJmV2ByOvAWJ3Ol24AA11GoHs9qUj9PS-hoKOogA
\begin{tikzcd}
(g \cdot g') \cdot \pi(x) \arrow[r, "\mu_{\mathscr Y}"] \arrow[d, " q"'] & {g\cdot(g'\cdot\pi(x)) \ar[r,"g\cdot q"]} & g\cdot(\pi(g'\cdot x)) \arrow[d, "q"] \\
\pi((g\cdot g')\cdot x) \arrow[rr, "\pi(\mu_{\mathscr X})"]              &                                           & \pi(g\cdot(g' \cdot x))              
\end{tikzcd}
\end{equation}
where $\mu_{\mathscr X}$ and $\mu_{\mathscr Y}$ are the  associativity 2-arrows for the action of $\mathcal{G}$ on $\mathscr{X}$ and $\mathscr{Y}$ respectively.
    
    \item For every chart $U$ and every object $x \in \mathscr{X}(U)$, the following diagram commutes:
\begin{equation}
\label{diag equivariant 2}  
% https://tikzcd.yichuanshen.de/#N4Igdg9gJgpgziAXAbVABwnAlgFyxMJZABgBpiBdUkANwEMAbAVxiRBgAIAdLgYygg5uXNFgAUADwCUIAL6l0mXPkIoAjOSq1GLNj1FiYPfoI7S5CkBmx4CRDWq31mrRCH3jzsrTCgBzeCJQADMAJwgAWyQyEBwIJA1tFzYARwsQ8KjEGLikACZqZ103HhgcOgB9YB4IuhwACzheUOAATVlZEGoGOgAjGAYABSVbVRBQrD96nHSQMMiE6lzEAqTi9xFxUvKKmrrG5uAADVkZb1kgA
\begin{tikzcd}
e \cdot \pi(x) \arrow[r, "q"] \arrow[rd, "\eta_{\mathscr{Y}}"'] & \pi(e\cdot x) \arrow[d, "\pi(\eta_\mathscr{X})"] \\
                                                                & \pi(x)                                          
\end{tikzcd}
\end{equation}
where $\eta_{\mathscr X}$ and $\eta_{\mathscr Y}$ are the  identity 2-arrows for the action of $\mathcal{G}$ on $\mathscr{X}$ and $\mathscr{Y}$ respectively.
\end{enumerate} 
\end{definition}

\begin{remark}
    If the Picard stack is a group scheme, then our definition of the $\mathcal{G}$-equivariant map is compatible with the one given by Romagny in \cite[Definition 1.3(ii)]{Romagny}.
    %\item {\color{red} In the definition of $\mathcal{G}$-torsors in \cite[\S 3.4]{aldrovandi-NoohiII}, where $\mathcal{G}$ is a group stack, the condition (2) is not included.}
\end{remark}

\begin{definition}
[{\cite[Definition 1.3(i), Definition 2.1(ii)]{Romagny}}]
\label{defn: strict action}
Let $H$ be a group scheme over $S$, and let $\mathscr{X}$ and $\mathscr{Y}$ be stacks over $S$ equipped with $H$-actions.
\begin{enumerate}
    \item An action of $H$ on $\mathscr{X}$ is said to be \emph{strict} if the associativity $2$-arrow $\mu_H$ and the identity $2$-arrow $\eta_H$ are identity $2$-isomorphisms.

    % {\color{blue}
    %     strict action means $\mu_H \colon x \cdot (h_1 \cdot h_2) \to (x \cdot h_1) \cdot h_2$ and $\eta_H \colon x \cdot e \to x$ are identity (for my reference).
    % }
         
    \item An $H$-equivariant morphism from $\mathscr{X}$ to $\mathscr{Y}$ is said to be \emph{strict} if the $2$-arrow $q_H$ is the identity $2$-isomorphism.

    % {\color{blue}
    %     strict morphism means $q_H \colon \pi(x) \cdot h = \pi(x) \to \pi(x \cdot h)$ is identity (for my reference).
    % }
\end{enumerate}
\end{definition}

\begin{remark}
If $\pi \colon \mathscr{X} \to \mathscr{Y}$ is an $H$-equivariant morphism of stacks equipped with strict $H$-actions, then $q_H \colon e \cdot \pi(x) \to \pi(e \cdot x)$ is the identity map by diagram \eqref{diag equivariant 2}.
% \textcolor{red}{I don't think I can conclude that $q_H \colon g \cdot \pi(x) \to \pi(g \cdot x)$ is identity in this case, i.e, a morphism between two stacks equipped with strict action can be non-strict.}
\end{remark}

Following Romagny \cite[Section 1]{Romagny}, any action of a group scheme $H$ on an algebraic stack $\mathcal{X}$ can be replaced by an equivalent stack $\mathcal{X}^{\mathrm{str}}$ equipped with a strict $H$-action. 
The strictification construction encodes these coherence isomorphisms into the object data, producing a stack on which the associativity and identity axioms hold strictly. Moreover, the
construction is functorial and yields a $2$-equivalence $\mathcal{X}^{\mathrm{str}} \simeq \mathcal{X}$, which is itself an $H$-morphism.
Therefore, throughout this paper we will consider only strict $H$-actions.
We describe Romagny's result in more detail in \Cref{sec: Appendix (Strict Actions)}.

%--------------------------------------------------------
%--------------------------------------------------------
\subsection{$\mathcal{G}$-equivariant principal $H$-bundles}
%--------------------------------------------------------
%--------------------------------------------------------

Let $\mathscr{X}$ be an algebraic stack over $S$ \cite[8.1.4]{Olsson}.
Recall that for a group scheme $H$ over $S$ we have the notion of (right) $H$-torsors over $\mathscr{X}$, that is, a sheaf over the big \'etale site $\mathscr{X}_{\rm \acute{e}tale}$ with a action of $H$ satisfying the usual torsor properties.
Here, $\mathscr{X}_{\rm \acute{e}tale}$  is the structure of site on $\mathscr X$ inherited from $({\rm Sch}/S)_{\rm \acute{e}tale}$ \cite[\href{https://stacks.math.columbia.edu/tag/06TP}{Definition 06TP}]{stacks-project}; see also \cite[\href{https://stacks.math.columbia.edu/tag/03AH}{Definition 03AH}]{stacks-project} and \cite[\S 4.5.1]{Olsson} for the definition of torsor on a site. 
For a more general discussion where $H$ is any group stack, see \cite[Definition 6.1]{Breen-bitorseur} \cite[\S 3.4]{aldrovandi-NoohiII} for $H$ any group stack.
On the other hand, just as in the case of schemes \cite[Definition 4.5.4]{Olsson}, one can define principal $H$-bundles on $\mathscr X$ as follows:

\begin{definition}
\label{defn: principal bundle}
A \emph{(right) principal $H$-bundle} over $\mathscr X$ is a pair $(\mathscr P \xrightarrow{\pi} \mathscr X, \rho \colon \mathscr{P}\times_S H \to \mathscr{P})$ such that 
\begin{enumerate}
    \item $\pi$ is representable by schemes,
    % algebraic spaces
    flat, locally of finite presentation, and surjective.
         
    \item $\rho$ is a strict group action of $H$ on $\mathscr{P}$ and $\pi$ is a strict $H$-invariant morphism.
         
    \item The map $(\pr_1, \rho) \colon \mathscr P\times_S H\to \mathscr{P}\times_{\mathscr X} \mathscr P$ is an equivalence.
\end{enumerate}
\end{definition}

\begin{remark}
Although not all morphisms between strict $H$-algebraic stacks are themselves strict, each is equivalent to a strict $H$-morphism; see \Cref{prop: strictification morphism}. 
Since we will be working with the category of principal $H$-bundles over $\mathscr{X}$ (see \Cref{sec: cat of principal bundles}), which is obtained from the corresponding $2$-category by taking 2-equivalence classes of 1-morphisms, we may, without loss of generality, assume that the projection to the base $\pi \colon \mathscr{P} \to \mathscr{X}$ is strict. 
Hence, we will assume that all principal $H$-bundles are strict $H$-algebraic stacks and that their projections to the base are strict $H$-morphisms.
\end{remark}

\begin{remark}
\begin{enumerate}
\item If the structure morphism $H \to S$ is flat, affine and locally of finite presentation, then the natural functor from the category of principal $H$-bundles on $\mathscr X$ to the category of $H$-torsors on $\mathscr X$ is an equivalence, as in \cite[Proposition 4.5.6]{Olsson}.

\item There is a canonical equivalence between the category of principal $H$-bundles over $\mathscr X$ in the sense of \Cref{defn: principal bundle} and in the sense of \cite[\S 1.2]{Biswas-Majumdar-Wong}.
Indeed, by \cite[Proposition 1.2]{Biswas-Majumdar-Wong}, a principal bundle in the latter sense is equivalent to a morphism of stacks $\phi \colon \mathscr X \to BH$.
Given a bundle $(\pi \colon \mathscr P \to \mathscr X,\rho)$ that satisfies our definition, the representability of $\pi$ ensures that for every object $x \colon U\to\mathscr X$, the pullback $\mathscr P_U := \mathscr P \times_{\mathscr X} U$ is a scheme over $U$, and the action $\rho$ equips it with the structure of a classical principal $H$-bundle, thus defining a morphism $\phi \colon \mathscr X \to BH$.
Conversely, pulling back the universal $H$-bundle $\mathrm{pt} \to BH$ along any morphism $\phi:\mathscr X \to BH$ yields a bundle $\mathscr P \to \mathscr X$ whose total space is representable by schemes and which satisfies the torsor axioms, and hence gives an object of our category.

% \item Perhaps the strictness of the actions and the morphism is not necessary. 
% The actions can be strictified by \cite[Proposition 1.5]{Romagny}. 
% For a $G$-equivariant morphism $\mc M\to \mc N$ (with $G$ acting on $\mc M$ and $\mc N$ via strict action) we can perhaps replace $\mc M$ by the stack $\mc M\times_{\mc N}\mc N$ and consider the projection $\mc M\times_{\mc N}\mc N\to \mc N$. 
% It seems to me that this is a strict $G$-equivariant morphism and this morphism is isomorphic to the morphism $\mc M\to \mc N$.
\end{enumerate}
\end{remark}

\begin{lemma}
\label{lemma: pullback of a principal bundle is a principal bundle}
If $\pi \colon \mathscr{P} \to \mathscr{X}$ is a principal $H$-bundles over an algebraic stack $\mathscr{X}$ and $X \to \mathscr{X}$ is a smooth atlas, then the induced morphism $\mathscr{P} \times_{\mathscr{X}} X \to X$ is a principal $H$-bundle.
\end{lemma}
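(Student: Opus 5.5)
The plan is to verify the three conditions of \Cref{defn: principal bundle} for $P_X := \mathscr{P} \times_{\mathscr{X}} X$ with its projection $\pi_X \colon P_X \to X$, using only base change. Throughout, $X$ is a scheme (an atlas), so $P_X$ should turn out to be a scheme and $\pi_X$ a principal $H$-bundle in the classical sense recalled in the section on toric principal $H$-bundles.

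\emph{Step 1 (representability and the properties of $\pi_X$).} Since $\pi$ is representable by schemes, the base change $P_X = \mathscr{P} \times_{\mathscr{X}} X$ along the morphism $X \to \mathscr{X}$ from a scheme is a scheme, and $\pi_X$ is a morphism of schemes. Flatness, local finite presentation and surjectivity of a representable morphism are by definition tested after base change to schemes, and all three are stable under base change. Hence $\pi_X$ is flat, locally of finite presentation and surjective.

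\emph{Step 2 (the $H$-action).} Define $\rho_X \colon P_X \times_S H \to P_X$ as $\rho \times_{\mathscr{X}} \mathrm{id}_X$. This is well defined because $\pi$ is strictly $H$-invariant, so $\pi \circ \rho = \pi \circ \pr_1$ holds on the nose, not merely up to a $2$-arrow. Since $P_X$ is a scheme, the action is automatically strict, and $\pi_X \circ \rho_X = \pi_X \circ \pr_1$ holds. Associativity and the unit axiom for $\rho_X$ follow from the strictness of $\rho$, namely from the fact that $\mu_H$ and $\eta_H$ are identities.

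\emph{Step 3 (the torsor condition).} Write down the chain of canonical equivalences
\[
P_X \times_X P_X \simeq (\mathscr{P} \times_{\mathscr{X}} \mathscr{P}) \times_{\mathscr{X}} X, \qquad P_X \times_S H \simeq (\mathscr{P} \times_S H) \times_{\mathscr{X}} X,
\]
where $\mathscr{P} \times_S H$ maps to $\mathscr{X}$ via $\pi \circ \pr_1$. Under these identifications, $(\pr_1, \rho_X)$ corresponds to the base change of $(\pr_1, \rho)$ along $X \to \mathscr{X}$. Base change of an equivalence is an equivalence, so $(\pr_1,\rho_X)$ is an equivalence of schemes, that is, an isomorphism.

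The main obstacle I expect is the bookkeeping in Step 3. One has to check that the two $2$-fibre-product identifications are compatible with $(\pr_1,\rho)$, which requires tracking the $2$-arrows in the fibre products of stacks. Strictness of $\rho$ and of $\pi$ makes these $2$-arrows identities, so the squares commute strictly. I would therefore verify the identification objectwise: on $U$-points, an object of $P_X$ is a triple $(p, x, \alpha \colon \pi(p) \to \bar{x})$, and $\rho_X$ sends $((p,x,\alpha),h)$ to $(p h, x, \alpha)$.
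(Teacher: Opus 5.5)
Your proposal is correct and is essentially the paper's argument. The paper also exhibits $P \times_S H \to P \times_X P \to P \to X$ as the $2$-cartesian base change of $\mathscr{P} \times_S H \xrightarrow{(\pr_1,\rho)} \mathscr{P} \times_{\mathscr{X}} \mathscr{P} \to \mathscr{P} \to \mathscr{X}$ along $X \to \mathscr{X}$, pulls back the action from $\rho$, and concludes by base-change stability (Stacks Project, Tag 04XD); you just spell out the three conditions of the definition that the paper checks in one line, and your argument only uses that $X$ is a scheme, not that the atlas is smooth.
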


\begin{proof}
Let $P := \mathscr{P}\times_{\mathscr X}X$. 
Note that we have the following $2$-cartesian diagram
\begin{equation*}
% https://tikzcd.yichuanshen.de/#N4Igdg9gJgpgziAXAbVABwnAlgFyxMJZABgBpiBdUkANwEMAbAVxiRAAUACAHW7wFt4AfQDKnABIgAvqXSZc+QigCM5KrUYs2XXgOEANTu2myQGbHgJEATGur1mrRBxNyLiomWXqHW57346HAALOABjACdgdikePixBOFEJVzN5SyVkVW97TScQAKDQyOipXQThYEKQ8Kj9KVjq4qiY1PMFKxRbHI1HNiba0rb0jxQAZjtevxB9YfdO5Ame33yBkvreUgA6aXUYKABzeCJQADMIiH4kMhAcCCQJqacwJgYGagY6ACMYBnYRzogBgwU44VLnS7Xah3JCqJ5IF5vD7fX7-eZKIEgsEyM4XK6IOEwxAAFlyfUQiPeQJRfwBGOBoPBeNh0PuiFs8Iprypnx+tPRbAZ2NMEPxHKJAFYyX5Kci+WiOvSsUzIYhHkTSZyCtwIsF7nLUXTBcqcSBRUhNZLpc9uQb+YrjYzTeb2aykAA2Z3MxDut2IADsXtVUtubMDFCkQA
\begin{tikzcd}
P \times_S H \arrow[d] \arrow[r]         & P \times_X P \arrow[d] \arrow[r]                      & P \arrow[d] \arrow[r] & X \arrow[d]      \\
\mathscr{P} \times_S H \arrow[r, "\rho"] & \mathscr{P}\times_{\mathscr{X}} \mathscr{P} \arrow[r] & \mathscr{P} \arrow[r] & {\mathscr{X}\,.}
\end{tikzcd}
\end{equation*}
Hence, it follows from \cite[\href{https://stacks.math.columbia.edu/tag/04XD}{Tag 04XD}]{stacks-project} that the morphism $P \to X$ is a principal $H$-bundle, where the action morphism $P \times_S H \to P$ is obtained by pulling back the action morphism $\mathscr{P} \times_S H \to \mathscr{P}$ along the morphism $P \to \mathscr{P}$.  
\end{proof}

\begin{definition}
Let $\pi \colon \mathscr{P} \to \mathscr{X}$ and $\pi' \colon \mathscr{Q} \to \mathscr{X}$ be principal $H$-bundles over a stack $\mathscr{X}$.
A \emph{morphism of principal $H$-bundles} over $\mathscr X$ is an $H$-equivariant morphism of stacks $f \colon \mathscr{P} \to \mathscr{Q}$, together with a $2$-isomorphism $\zeta \colon \pi \Rightarrow \pi' \circ f$ such that for every chart $U$, every object $p \in \mathscr{P}(U)$, and every $h \in  H(U)$ the following diagram commutes:
\[
% https://tikzcd.yichuanshen.de/#N4Igdg9gJgpgziAXAbVABwnAlgFyxMJZABgBpiBdUkANwEMAbAVxiRAB120sAKNTgMZQIOAAQALAJQgAvqXSZc+QigBM5KrUYs2nbgHIeAMz6DhYqdLkLseAkXUBGTfWatEHLlkMm0k0WYiEpKcpNTWIBi2ykRkztSuOh56vH6y8pGKdirIjqTxWm66Xj58klaaMFAA5vBEoEYAThAAtkhkIDgQSHmFSZ4AXjA4dOkNzW2IHV1IAMzUDFhg7iDCTABGDKzU4jB0UGyQyyDUI1gMhwSsEU2tc6fdiAAsCdornEMjYyC3ky+dj3UIEWxw8a022xAu32l2Opzo51h1wyvx6DyQQMSKwAjgB9IyyCgyIA
\begin{tikzcd}
\pi(p\cdot h) \arrow[rr, "\zeta"] \arrow[d, Rightarrow, no head] &                                           & \pi'(f(p\cdot h)) \arrow[d, "q_f"] \\
\pi(p) \arrow[r, "\zeta"]                                        & \pi'(f(p)) \arrow[r, Rightarrow, no head] & {\pi'(f(p) \cdot h)\,,}           
\end{tikzcd}
\]
where $q_f$ is the $2$-arrow corresponding to the $H$-equivariant morphism $f$, see diagram \eqref{diag: equivariant map 2-isomorphism}.
\end{definition}

\begin{lemma}
\label{lemma: morphism of principal bundles is an isomorphism}
If $f \colon \mathscr{P} \to \mathscr{Q}$ is a morphism of principal $H$-bundles over an algebraic stack $\mathscr{X}$, then the morphism $f$ is an equivalence.
\end{lemma}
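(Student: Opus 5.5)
The plan is to reduce to the case of schemes by pulling back along a smooth atlas, and then descend the resulting isomorphism. Choose a smooth surjective atlas $u \colon X \to \mathscr{X}$ from a scheme $X$. By \Cref{lemma: pullback of a principal bundle is a principal bundle}, $P := \mathscr{P}\times_{\mathscr{X}} X$ and $Q := \mathscr{Q}\times_{\mathscr{X}} X$ are principal $H$-bundles over $X$. Since $\pi$ and $\pi'$ are representable by schemes, $P$ and $Q$ are schemes. The pair $(f,\zeta)$ induces a morphism $f_X \colon P \to Q$ over $X$: on objects it sends $(p, x, \alpha\colon \pi(p)\to u(x))$ to $(f(p), x, \alpha\circ\zeta^{-1})$. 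The compatibility diagram in the definition of a morphism of principal $H$-bundles, together with the strictness of the $H$-actions and of $\pi,\pi'$, is exactly what makes $f_X$ $H$-equivariant for the pulled-back actions.

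Next, I would show that $f_X$ is an isomorphism, using the classical argument for torsors over a scheme. Both $P$ and $Q$ are fppf-locally trivial over $X$, because $\pi$ is flat, locally of finite presentation and surjective. After base change along an fppf cover $V \to X$ over which $P$ admits a section $s$, the composite $f_X\circ s$ is a section of $Q_V$. Both $P_V$ and $Q_V$ are then identified with $V\times_S H$, and $f_X$ becomes the map $(v,h)\mapsto (v,h)$ by $H$-equivariance, hence an isomorphism. Being an isomorphism is fpqc-local on the target, so $f_X$ is an isomorphism of schemes.

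Finally, I would descend. There is a $2$-cartesian square with $f_X$ over $f$, obtained by viewing $P \to \mathscr{P}$ and $Q\to\mathscr{Q}$ as base changes of $u$. Using $\zeta$, one checks that $P \simeq \mathscr{P}\times_{\mathscr{Q}} Q$. Indeed, $\mathscr{P}\times_{\mathscr Q}(\mathscr Q\times_{\mathscr X}X)\simeq \mathscr P\times_{\mathscr X} X$ via $\pi'\circ f \cong \pi$. Since $Q\to\mathscr{Q}$ is smooth and surjective, being the base change of $u$, and being an equivalence is local on the target for representable morphisms, $f$ is representable and its base change $f_X$ is an isomorphism. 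Hence $f$ is an equivalence. Concretely, we can also construct the quasi-inverse by descending $f_X^{-1}$ along the groupoid $X\times_{\mathscr X}X \rightrightarrows X$, since $f_X^{-1}$ is compatible with the two pullbacks.

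The main obstacle is the bookkeeping of $2$-arrows in the first and last steps. For the first step, I need to verify that $\zeta$ and $q_f$ make $f_X$ a strictly $H$-equivariant morphism. For the last step, I need to verify that the square relating $f_X$ and $f$ is genuinely $2$-cartesian. I would handle both by working with explicit objects of the $2$-fibre products, as in the description of $f_X$ above, and invoking the commutative diagram in the definition.
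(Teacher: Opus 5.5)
Your proposal is correct and follows essentially the same route as the paper: pull back along a smooth atlas $X \to \mathscr{X}$, use that the induced map $\mathscr{P}\times_{\mathscr{X}}X \to \mathscr{Q}\times_{\mathscr{X}}X$ is a morphism of principal $H$-bundles over a scheme and hence an isomorphism, and descend along the resulting $2$-cartesian square. The paper cites Stacks Tag 04XD for the descent step and takes the scheme case for granted, which you prove via fppf-local trivialization; the one claim you leave unjustified, that $f$ is representable, is easily supplied, since $f$ factors as the graph $\mathscr{P}\to\mathscr{P}\times_{\mathscr{X}}\mathscr{Q}$ (a base change of the diagonal of $\pi'$) followed by the projection to $\mathscr{Q}$ (a base change of $\pi$), and both of these are representable by schemes.
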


\begin{proof}
Let $X \to \mathscr{X}$ be a smooth atlas.
Consider the $2$-cartesian diagram
\[
% https://tikzcd.yichuanshen.de/#N4Igdg9gJgpgziAXAbVABwnAlgFyxMJZABgBoBGAXVJADcBDAGwFcYkQAdDgW3pwAs4AYwBOwAAoBfEJNLpMufIRTkK1Ok1bsuvAcLEBFabPnY8BIgCY1NBizaJOPPoNHAAGpK6kAdDLkgGGZKVqTE6nZaju7+pooWKmERmg5Ouq6GkgAEXHjc8AD6wDou+h6S2TEmgQrmyiRJtinaznpuUjkceYXFrRnllTLqMFAA5vBEoABmIhDcSGQgOBBIqhr27FOxIDNzqzTLSJbVu-OIAMwHK4jHAadIACxXqyezZwCsz4jEr3uIn0trg9fmcnoCkOdJJRJEA
\begin{tikzcd}
\mathscr{P} \times_{\mathscr{X}} X \arrow[d] \arrow[r] & \mathscr{Q} \times_{\mathscr{X}} X \arrow[d] \arrow[r] & X \arrow[d]      \\
\mathscr{P} \arrow[r, "f"]                             & \mathscr{Q} \arrow[r]                                  & {\mathscr{X}\,.}
\end{tikzcd}
\]
Since $\mathscr{P} \times_{\mathscr{X}} X \to \mathscr{Q} \times_{\mathscr{X}} X$ is a morphism of principal $H$-bundles over $X$, it is an equivalence.
Hence, by \cite[\href{https://stacks.math.columbia.edu/tag/04XD}{Tag 04XD}]{stacks-project}, it follows that the morphism $f$ is an equivalence.
\end{proof}

%-------------------------------------------------------------------

Let $\mathcal{G}$ be a Picard stack over $S$ acting on a stack $\mathscr X$ over $S$.
Following \cite[Definition~3.1.8]{Breen-Theorie}, we define the notion of a (right) $\mathcal{G}$-equivariant $H$-principal bundles as follows:

\begin{definition}
\label{def: equivariant bundle}
A \emph{(right) $\mathcal{G}$-equivariant principal $H$-bundle} over $\mathscr{X}$ consists of the following data:
\begin{itemize}
    \item a principal $H$-bundle $\mathscr{P}$ over $\mathscr{X}$ with action morphism $a_H \colon \mathscr{P} \times_S H \to \mathscr{P}$,

    \item an action of the Picard stack $\mathcal{G}$ on $\mathscr{P}$ with action morphism $a_{\mathscr{P}} \colon \mathcal{G}\times_S \mathscr{P} \to \mathscr{P}$,

    \item a commutativity $2$-arrow $b$ (i.e., the two actions of $\mathscr{P}$ commutes):
\begin{equation}
\label{diag: G action commutes with H action}
% https://tikzcd.yichuanshen.de/#N4Igdg9gJgpgziAXAbVABwnAlgFyxMJZABgBpiBdUkANwEMAbAVxiRAB12BbOnACwDGjYAHEAvgAJOeLvAD6AZSndefOAIBOwAAqTpWWXEUSAEiDGl0mXPkIoyARiq1GLNpx78hDUXvYz5JQ9VdS1dc0sQDGw8AiIAJnJnemZWRA4VflCdPwCjJTMLKxjbBNInahS3dOCszRzOUmoxZxgoAHN4IlAAMw0ILiQyEBwIJAdqBjoAIxgGbWtYuxANLHa+HBBK1zSMzz4NLmAsKDE5YFrBYXFcg3gJOnOTMQje-sHERJGxxABmbdSbEewGerxAfQG42ooyQ-xcgPSwMu3l8LyK4PeQ2hPy+VV2SMyVx8N2UeWU+0Ox1OckKkQhHwm3yQuJ2bGmWxADCwYF2UAgTGmDFY1D4MDoUCQYCYDAY0LoWAYbEgPPMFDEQA
\begin{tikzcd}
\mathcal{G} \times_S \mathscr{P} \times_S H \arrow[d, "\mathrm{id}_{\mathcal{G}} \times a_{H}"'] \arrow[rr, "a_{\mathcal{G}} \times \mathrm{id}_H"] &  & \mathscr{P} \times_S H \arrow[d, "a_{H}"] \\
\mathcal{G} \times_S \mathscr{P} \arrow[rr, "a_{\mathcal{G}}"] \arrow[rru, "b", Rightarrow,  shorten <=8pt, shorten >=8pt]                                                         &  & {\mathscr{P}\,,}                         
\end{tikzcd}
\end{equation}
also denoted by $b \colon g\cdot (x \cdot h) \to (g \cdot x) \cdot h$, 
\end{itemize}
such that the projection $\pi \colon \mathscr{P} \to \mathscr{X}$ is a $\mathcal{G}$-equivariant morphism (in the sense of Definition \ref{defn: equivariant map}, with $2$-arrow $q \colon a_{\mathscr X} \circ (\mathrm{id}_{\mathcal{G}} \times \pi) \implies \pi\circ a_{\mathscr{P}}$).
These data must satisfy the following conditions:
\begin{enumerate}
    \item For every chart $U$,  objects $g,g' \in \mathcal{G}(U)$, $x \in \mathscr{P}(U)$ and $h \in H(U)$, the following diagram commutes
    \begin{equation}
    \label{diag: equi-prin-bundle diag 1}
    % https://tikzcd.yichuanshen.de/#N4Igdg9gJgpgziAXAbVABwnAlgFyxMJZABgBpiBdUkANwEMAbAVxiRAAoBzTgcgEp2ADwAWfEAF9S6TLnyEUAJnJVajFm3ZdefQX2ESpIDNjwEiZAIwr6zVohCcuPIaLGTpJuUSVXqN9fZcTrp6Bh6yZigWpL6qtmyOwXpuKjBQnPBEoABmAE4QALZIZCA4EEjRcQEgAEYg1Ax0NTAMAAoypvIgDDDZOGEgeYXF1GVISlV2IAA60wVMAPrAswV0OMJwAMa5wK3i4vXdTS3tnpEguVicwv3ug-lFiJVjiADMfmpTK4vLc2sb212+wABLNNlAIDhgfoGsc2h0vPYen0BkNHhMXgAWD7xeycUHTcGQ4F1O5opDY0rlN446qkijiIA
\begin{tikzcd}
(gg')(xh) \arrow[rr, "b"] \arrow[d, "\mu_{\mathscr{P}}"'] &                          & ((gg')x)h \arrow[d, "\mu_{\mathscr{P}} \cdot h"] \\
g(g'(xh)) \arrow[r, "g \cdot b"]                          & g((g'x)h) \arrow[r, "b"] & (g(g'x))h                                       
\end{tikzcd},
    \end{equation}
where $\mu_{\mathscr P}$ and $\mu_{\mathscr X}$ are the  associativity 2-arrows for the action of $\mathcal{G}$ on $\mathscr{P}$ and $\mathscr{X}$ respectively.

\item For every chart $U$, every object $g \in \mathcal{G}(U)$, $x \in \mathscr{P}(U)$ and $h,h' \in H(U)$, the following diagram commutes
    \begin{equation}
    \label{diag: equi-prin-bundle diag 2}
    % https://tikzcd.yichuanshen.de/#N4Igdg9gJgpgziAXAbVABwnAlgFyxMJZABgBpiBdUkANwEMAbAVxiRAHMAKAD04As+AcgCUwkAF9S6TLnyEUZAIxVajFm07tuw-kLGTp2PASIAmcivrNWiEJp59RQiVJAYjcs6WXUr625ya2o7OBm4yxvLIiha+ajYcgdwhIhIqMFDs8ESgAGYAThAAtkhkIDgQSDGq1mwARiDUDHR1MAwAChGetvlY7Hw4LnmFJYjm5ZWIAMxxtbZ1AAQAOksAxlAQOAuhrgXFpdQVSAAsTVhgCRtMdQys1HwwdFBskBeN5XRYDC8ErGF7o1OEyQ4z8CQa-xGVUOkxmIAY50uEGut3eDyePzeh0+31srz+FHEQA
\begin{tikzcd}
g(x(hh')) \arrow[d, "b"'] \arrow[r, Rightarrow, no head] & g((xh)h') \arrow[r, "b"] & (g(xh))h' \arrow[d, "b \cdot h'"] \\
(gx)(hh') \arrow[rr, Rightarrow, no head]                &                          & ((gx)h)h'                        
\end{tikzcd}
    \end{equation}

    \item For every chart $U$ and every object $x \in \mathscr{P}(U)$, the following diagram commutes

\begin{equation}
\label{diag: equi-prin-bundle diag 3}
% https://tikzcd.yichuanshen.de/#N4Igdg9gJgpgziAXAbVABwnAlgFyxMJZABgBpiBdUkANwEMAbAVxiRBgH1gAdbgWzo4AFgGNGwAOIBfKbxFQIOAAQAKAB5K5C5Z2AAJKQEoQU0uky58hFAEZyVWoxZsNWxUt0GTZkBmx4CIgAme2p6ZlZEEDVvc38rIjIbB3DnKJVdXgFhMQZJGU1ueXc1Q0LinQ49WN8LAOtkO2Swp0j2LizBUXFpKXLtJRjTOMtAlBDmxwiXEwcYKABzeCJQADMAJwg+JDIQHAgkOym0kF4YHDoO-i64EXXgAAUZEGoGOgAjGAYHuoSohhgqxwNQ2Wx21H2SAAzC1plF3i8QG9Pt9fmMQOssAshMDhiBQdtEEdIYgQsdImAmAwGK8Pl8fvF0QCgYiGFgwG0FEx3gDEUIYHQoGxIByQZtCWSSQBWWFpSnU2kohmjaxIwHA17szkQbm86j8wXCgisPEE6EQg6IAAssopVJpSLpqMZquZGqRWrYXJ5rH1AqFURFJp8ZutFqQMvJbDOFyu2SEt3uTykrKdyvqbDdsykQA
\begin{tikzcd}
e_{\mathcal{G}}\cdot (x \cdot e_{H}) \arrow[r, "\eta_{\mathscr{P}}"] \arrow[d, "b"'] & x \cdot e_{H} \arrow[r, Rightarrow, no head]            & x \arrow[d, Rightarrow, no head] \\
(e_{\mathcal{G}} \cdot x) \cdot e_H \arrow[r, Rightarrow, no head]                   & e_{\mathcal{G}} \cdot x \arrow[r, "\eta_{\mathscr{P}}"] & x                               
\end{tikzcd}
\end{equation}
where $\eta_{\mathscr P}$ and $\eta_{\mathscr X}$ are the identity 2-arrows for the action of $\mathcal{G}$ on $\mathscr{P}$ and $\mathscr{X}$ respectively.
% {\color{red}
% \item May need to include if the $H$-action/morphism are non-strict, since the proposition below won't be valid.
% \[
% % https://tikzcd.yichuanshen.de/#N4Igdg9gJgpgziAXAbVABwnAlgFyxMJZABgBpiBdUkANwEMAbAVxiRAB120sAKAcwAEnAMZQIOATwAeQ9qPECAFgEplIAL6l0mXPkIoyARiq1GLNp249+s+RKnLbYiSo1aQGbHgJFD5E-TMrIgggiLOslYy4Qqumtpeer6kxtSB5iGWvGFyEQ5OsW4Juj4oAEz+aWbBoQUSPFnSjjEuavEeOt76yBWppkFsNi2RvA7NuYXqJjBQfPBEoABmAE4QALZIZCA4EEiG7Svrm9Q7SGUHqxuIFdu7iAAsF0eIfrdIAMxPV+8ndwCsXyQ91+SABFHUQA
% \begin{tikzcd}
% \pi(g \cdot (x \cdot h)) \arrow[d] \arrow[r] & g \cdot \pi(x \cdot h) \arrow[r] & g \cdot (\pi(x) \cdot h) \arrow[d] \\
% \pi((g \cdot x) \cdot h) \arrow[r]           & \pi(g \cdot x) \cdot h \arrow[r] & (g \cdot \pi(x)) \cdot h          
% \end{tikzcd}
% \]
% }
\end{enumerate}
\end{definition}

\begin{proposition}
The diagrams \eqref{diag: equi-prin-bundle diag 1}, \eqref{diag: equi-prin-bundle diag 2} and \eqref{diag: equi-prin-bundle diag 3} are commutative if the following diagram is commutative
\begin{equation}
\label{diag equivariant bundle}
% https://tikzcd.yichuanshen.de/#N4Igdg9gJgpgziAXAbVABwnAlgFyxMJZABgBpiBdUkANwEMAbAVxiRAB120sAKAc04BjKBBwACHgA8hI8QAsAlArGc6AJ2Rq1pdhy68ARgt0UQAX1LpMufIRQAmclVqMWbTtx78ZosZOPswr6KKuzqyFCkMACOTIymFlbYeAREZACMzvTMrIggAoGyoZ7ShcHKqhraMXEMCZYgGMm2ROmkmdTZbnkFQeIevP7mDU02qQ7tWa65ep69RUNmzjBQfPBEoABmahAAtkgAzNQ4EEgALJ3TbNHDWzv7iI4gJ0hkLjnX5hRmQA
\begin{tikzcd}
{\pi(g\cdot (x\cdot h)) \ar[rr,"\pi(b)"]}          &                              & {\pi((g\cdot x)\cdot h) \ar[d,equal]} \\
{g\cdot \pi(x\cdot h) \ar[r,equal]} \arrow[u, "q"] & g\cdot \pi(x) \arrow[r, "q"] & \pi(g\cdot x) ,                       
\end{tikzcd}
\end{equation}
for every chart $U$, every object $g \in \mathcal{G}(U)$, $x \in \mathscr{P}(U)$ and $h \in H(U)$.
\end{proposition}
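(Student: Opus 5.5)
The idea is to use that $\pi\colon\mathscr{P}\to\mathscr{X}$ is a principal $H$-bundle, so $\pi$ is representable by schemes. Hence, for any object of $\mathscr{P}(U)$, arrows in the fibre of $\mathscr{P}$ lying over a fixed arrow of $\mathscr{X}$ are unique. More precisely, by Definition~\ref{defn: principal bundle}(1), the functor $\mathscr{P}(U)\to\mathscr{X}(U)$ is faithful: for objects $y,y'\in\mathscr{P}(U)$ the map $\mathrm{Hom}_{\mathscr{P}(U)}(y,y')\to\mathrm{Hom}_{\mathscr{X}(U)}(\pi(y),\pi(y'))$ is injective. This holds because a morphism representable by algebraic spaces is faithful on fibre categories. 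So, to show that two parallel arrows in $\mathscr{P}(U)$ agree, it suffices to show that their images under $\pi$ agree in $\mathscr{X}(U)$. I will apply this to each of \eqref{diag: equi-prin-bundle diag 1}, \eqref{diag: equi-prin-bundle diag 2} and \eqref{diag: equi-prin-bundle diag 3}. I use strictness of the $H$-action and of $\pi$ throughout, so that $\pi(y\cdot h)=\pi(y)$ and $\pi(\phi\cdot h)=\pi(\phi)$ for arrows $\phi$.

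The key step is the reformulation of \eqref{diag equivariant bundle}: it says that
\[
\pi(b)=q_{g,x}\circ q_{g,xh}^{-1}\colon \pi(g(xh))\to\pi((gx)h)=\pi(gx).
\]
For \eqref{diag: equi-prin-bundle diag 2}, I apply $\pi$ to both composites $g(x(hh'))\to((gx)h)h'$. Each composite is built from instances of $b$ and $b\cdot h'$, and $\pi(b\cdot h')=\pi(b)$. Substituting the formula above, each side becomes a telescoping product of $q$'s, and both collapse to $q_{g,x}\circ q_{g,x}^{-1}$ with the appropriate arguments, namely $q_{g,x}\circ q_{g,x(hh')}^{-1}$. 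Here I use $x(hh')=(xh)h'$ strictly, so the $q$'s with these arguments coincide.

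For \eqref{diag: equi-prin-bundle diag 3}, I apply $\pi$ and use \eqref{diag equivariant 2} for $\pi$, which gives $\pi(\eta_{\mathscr P})\circ q=\eta_{\mathscr X}$. Together with $x\cdot e_H=x$ and $b=$ the arrow with $\pi(b)=q\circ q^{-1}$, this shows that both sides map to $\eta_{\mathscr X}\circ q^{-1}$-type expressions that agree. The identity $\pi(b)=\mathrm{id}$ for $g=e$ comes out directly.

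For \eqref{diag: equi-prin-bundle diag 1}, I apply $\pi$ and rewrite $\pi(\mu_{\mathscr P})$ using \eqref{diag equivariant 1}, namely $\pi(\mu_{\mathscr P})=q\circ(g\cdot q)\circ\mu_{\mathscr X}\circ q^{-1}$. I also rewrite $\pi(g\cdot b)$, which requires $q$ to be natural in its arguments (it is a $2$-arrow, hence natural): $\pi(g\cdot b)=q\circ(g\cdot\pi(b))\circ q^{-1}$. After substituting and using the formula for $\pi(b)$, both sides reduce to the same word in $q$, $g\cdot q$ and $\mu_{\mathscr X}$. I expect this to be the main obstacle: the bookkeeping, making sure the naturality of $q$ and the functoriality of $g\cdot(-)$ are applied to the correct arguments. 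Careful cancellation should make both paths equal to $q_{g,g'x}\circ(g\cdot q_{g',x})\circ\mu_{\mathscr X}\circ q_{gg',xh}^{-1}$. Faithfulness of $\pi$ then concludes all three diagrams.
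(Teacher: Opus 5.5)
Your proposal is correct and follows essentially the same route as the paper. Faithfulness of $\pi$ (from representability by schemes) reduces each diagram to its image under $\pi$, and there the relation $\pi(b)\circ q_{g,xh}=q_{g,x}$, diagram \eqref{diag equivariant 1} and naturality of $q$ settle it. Your telescoping computations are the algebraic form of the paper's diagram chases, and your common value $q_{g,g'x}\circ(g\cdot q_{g',x})\circ\mu_{\mathscr X}\circ q_{gg',xh}^{-1}$ for both paths of \eqref{diag: equi-prin-bundle diag 1} is right.

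For \eqref{diag: equi-prin-bundle diag 3}, your observation $\pi(b_{e,x,e_H})=q_{e,x}\circ q_{e,x}^{-1}=\mathrm{id}$, hence $b_{e,x,e_H}=\mathrm{id}$ by faithfulness, already suffices. So the appeal to \eqref{diag equivariant 2}, which the paper makes, is not actually needed.
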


\begin{proof}
    As the map $\pi \colon \mathscr P\to \mathscr X$ is representable by schemes, it is a faithful functor \cite[\href{https://stacks.math.columbia.edu/tag/04Y5}{Tag 04Y5}]{stacks-project}. 
    In particular, the diagrams \eqref{diag: equi-prin-bundle diag 1}, \eqref{diag: equi-prin-bundle diag 2} and \eqref{diag: equi-prin-bundle diag 3} are commutative if and only if they are commutative after applying $\pi$.
    
    Consider the diagram \eqref{diag: equi-prin-bundle diag 1} after applying $\pi$:
    \[
    % https://tikzcd.yichuanshen.de/#N4Igdg9gJgpgziAXAbVABwnAlgFyxMJZARgBoAGAXVJADcBDAGwFcYkQAdDtLACl4DmAgOQBKXgA8AFqNEgAvqXSZc+QigDMFanSat2XHv0EjRE0TIVKQGbHgJEyxHQxZtEnbnwGDhkmbJWynZqRFrONK76HoZ8gr7mFnKKwaoOKABMpBkueu4gArH8IuaWKTYq9urI5NqReewmYkXSyda2adVZEbpuBl7xCUltqVWO2bl9HgIABEUi-oHlHWMotTn1UwW+LQFBFSHpyAAsdb3RnkZNZiMHnUSnPVH580PJOjBQAvBEoABmACcIABbJC1EA4CBIMjnF4DABGchojHo8JgjAACpVQh5GDA-jh9oCQWCaJCkFlYf0jFxgcwAPrAWn0HBSOAAYwBwAx8nkSJAKLRmOx6RAAKwAikhPKxNBiBh5MQWipMQGtIZTI4wJZbM53N5cw47KgEBwM0syNR6Kxh3UAvx0ussqQAFYyVDEODnuwAI5EoFyymKgBsmwu80NxtNM0R-pJiFDEI9yu9qqMsZlAaQpyTSETqZAfsz8YA7O6KWH8kWnVnEGXc4gcwXZlwo2airHLUKbfdcQ643K3Q2AByV-p0xnM1kcrkADV5IC71pFdvFksd-1ro4b9ebkZNZurm-jCo9AE5kVgwPkTcx4XjFyApDB6FB2JBr4+cPQsIx3wQ2BoOApCwAkwWLOVlUVYhwUYK8bwgO8HxoZ9X3-T8yR-P8PA-NgIKQC8Gxgsc0z4dVJy1HUZ31PkFEoeQgA
\begin{tikzcd}
(gg')\pi(xh) \arrow[r, "q"] \arrow[dd, "\mu_{\mathscr{X}}"'] & \pi((gg')(xh)) \arrow[rr, "\pi(b)"] \arrow[d, "\pi(\mu_{\mathscr{P}})"'] &                                    & \pi(((gg')x)h) \arrow[d, "\pi(\mu_{\mathscr{P}} \cdot h)"] \arrow[r, Rightarrow, no head] & \pi((gg')x) \arrow[d, "\pi(\mu_{\mathscr{P}})"] \\
                                                             & \pi(g(g'(xh))) \arrow[r, "\pi(g \cdot b)"]                               & \pi(g((g'x)h)) \arrow[r, "\pi(b)"] & \pi((g(g'x))h) \arrow[r, Rightarrow, no head]                                             & \pi(g(g'x))                                     \\
g(g'\pi(xh)) \arrow[r, "g \cdot q"]                          & g \pi(g'(xh)) \arrow[u, "q"] \arrow[r, "g \cdot \pi(b)"]                 & g\pi((g'x)h) \arrow[u, "q"]        &                                                                                           &                                                
\end{tikzcd}
    \]
    where the left commutative block is the diagram \eqref{diag equivariant 1}, and the bottom commutative square expresses that $q$ is a natural transformation.
    Hence, the diagram \eqref{diag: equi-prin-bundle diag 1} is commutative if and only if the outer diagram is commutative.
    After resolving the outer diagram using the relation $\pi(b) \circ q = q$, we get the following diagram 
    \[
    % https://tikzcd.yichuanshen.de/#N4Igdg9gJgpgziAXAbVABwnAlgFyxMJZAZgBoAGAXVJADcBDAGwFcYkQAdDtLACl4DmAgOQBKAB6iQAX1LpMufIRRkAjNTpNW7Lj0GDhkqbPnY8BIuQoaGLNohCCRo3X3EALY3JAYzSy6TqNLbaDgIGrrweol6mihYoAEyBNlr2IAKRBpKeMhowUALwRKAAZgBOEAC2SFYgOBBIqsFpOtx8XFXMAPrAnfQ47nAAxuXAAArS0lI0jPQARjCM4wrmyiCMMKU4Mt4V1UjJ9Y2IZJp2bV29-YMjYwAaUyCzC0srfgkg5VgC7jsmIH2NVONAaSAALC0LmEuMMoBAcAACACOuzKlWBkOOTShoRAqIBQMOoJOdRC6QJlGkQA
\begin{tikzcd}
(gg')\pi(xh) \arrow[d, "\mu_{\mathscr{X}}"'] \arrow[rrr, "q"] &  &                             & \pi((gg')x) \arrow[d, "\pi(\mu_{\mathscr{P}})"] \\
g(g'\pi(xh)) \arrow[rr, "g\cdot q"]                           &  & g\pi((g'x)h) \arrow[r, "q"] & \pi(g(g'x))                    ,              
\end{tikzcd}
    \]
 which is the diagram \eqref{diag equivariant 1}.

    Now consider the diagram \eqref{diag: equi-prin-bundle diag 2} after applying $\pi$:
    \[
    % https://tikzcd.yichuanshen.de/#N4Igdg9gJgpgziAXAbVABwnAlgFyxMJZARgBpiBdUkANwEMAbAVxiRAB120sAKAcx4APHgAsRAcgCU0kAF9S6TLnyEUZAExVajFm07ce-QZNESZ8xdjwEiAZnJb6zVog5de-ISOlm5CkBhWKnakmtROuq76HkaS3r4WAUrWqsjqDuE6Lm4GAl5xUpJ+lso2KAAsGdrOeu6egt5FiYGlqZVh1ZE5HnzG3sVJQWUkpAAMjllsfJyk0UKmhQMtKUSjoRM1UXW9Tf7LwSjp45mbINPss3X5CXvJB8hrx53Z55cGxnJaMFB88ESgADMAE4QAC2SDWIBwECQZGetQMACMitQGHRETAGAAFO5lEAMGAAnADYFgpDpKEwxD2eFbJEAAk4AGMoBAcPSboCQeDEJDoUhKvisGBsqymIiCSBqCIYHQoGxICKpVC6FgGAqCKxEqSeYL+YgKRFsnNkSTubDqPqaQxhaKIOLJdLZfLXIrWJbVerXZqzWSDZaqQBWVG2thiiXukAyuUapUetWxrX+HVIGn6gBsIaVrnDjqjzsTypwnsL2vNiGDlKQmdp3R4prLfoAHAGLULsyBc5Hoy7wD7416+0rGzyAJytxCCo1sACOvp5AHYJ5Dp645yOkEuq4hx-mY964+27Q7I8WEwek1y-cQ+VStz3C1njxGiyWL-PYbekC3a+uKLIgA
\begin{tikzcd}
{g\,\pi(x)} \arrow[r, Rightarrow, no head] \arrow[dd, "q"] & {g\,\pi(x(hh'))} \arrow[d, "q"] \arrow[r, Rightarrow, no head]    & {g\,\pi((xh)h')} \arrow[d, "q"]    &                                                                            &                                \\
                                                           & \pi(g(x(hh'))) \arrow[d, "\pi(b)"] \arrow[r, Rightarrow, no head] & \pi(g((xh)h')) \arrow[r, "\pi(b)"] & \pi((g(xh))h') \arrow[d, "\pi(b \cdot h')"] \arrow[r, Rightarrow, no head] & \pi(g(xh)) \arrow[d, "\pi(b)"] \\
\pi(gx) \arrow[r, Rightarrow, no head]                     & \pi((gx)(hh')) \arrow[rr, Rightarrow, no head]                    &                                    & \pi(((gx)h)h') \arrow[r, Rightarrow, no head]                              & \pi((gx)h)                    
\end{tikzcd}
    \]
    where the left commutative block is diagram \eqref{diag equivariant bundle}.
    Hence, the diagram \eqref{diag: equi-prin-bundle diag 2} is commutative if and only if the outer diagram is commutative.
    After resolving the upper row using the relation $\pi(b) \circ q = q$, we get the following diagram
    \[
    % https://tikzcd.yichuanshen.de/#N4Igdg9gJgpgziAXAbVABwnAlgFyxMJZAJgBoAGAXVJADcBDAGwFcYkQAdDtLACgHNeADwAWASjEgAvqXSZc+QijIBGanSat2XHrwFCx46bJAZseAkXIV1DFm0Qh+XUjr4Hjc84quk1NOy1HN31JGS8FSxQVGwDNBycAAhcQ0TD1GCh+eCJQADMAJwgAWyRrEBwIJBiNe21uPgAjSRpGekaYRgAFeQslEAKsfhEcTxBCkqQyCqrEAGY4uscARxBW9s6e7yiBoZGxidL5mkrqmhEYeih2SDA2Vqw79igIZkbGe4r6LEYbgjZwuMikdpqdEAAWB5PRwvN4fNYgC5XP5PE7fX6OW4AkyHJCQmZlRZBECrKSUKRAA
\begin{tikzcd}
{g\,\pi(x)} \arrow[d, "q"'] \arrow[r, Rightarrow, no head] & {g \,\pi(xh)} \arrow[r, "q"] & \pi(g(xh)) \arrow[d, "\pi(b)"'] \\
\pi(gx) \arrow[rr, Rightarrow, no head]                    &                              & \pi((gx)h)                     ,
\end{tikzcd}
    \]
    which is diagram \eqref{diag equivariant bundle}.

    Now consider the diagram \eqref{diag: equi-prin-bundle diag 3} after applying $\pi$:
    \[
    % % https://tikzcd.yichuanshen.de/#N4Igdg9gJgpgziAXAbVABwnAlgFyxMJZARgBpiBdUkANwEMAbAVxiRAB120sAKGAfWCcAtnRwALAMaNgAcQC+8zpKgQcAAh4APdctUaBwABLyAlKZDzS6TLnyEUAJnJVajFm07dtu9irXqhiYWVjbYeAREAMwu1PTMrIgcXLxaIdYgGOH2RGSOrvEeSV68fIIiYlIyCvK+-hppdfqB-EbpYXaRTqT5ce6Jyd6GFRLSDHKKTQFplhlZnQ7IMb1uCZ4p2u2ZthGLZAAMBf1sw+yio9WTegElPtcGrVvzu0T7pId9a0mn51XjNVMNLcZqFttkusg3itCgNfENymdKmMJrV7uoQa4YFAAObwIigABmACcIMIkG8QDgIEgyKsioNeJwYDg6AjfnBJETgAAFRQWagMOgAIxgDG5OxySQYMAJOFmhJJZMQFKpSBidIGtyF-JAgpFYolXRARKw2PEctBxNJNOoqsQzl1wtF4vBDl1Mrl1HEMDoUDYkDArAFWEDbFUTCF0pAtroWAY-oIrEtiqQDrtAFZPkUwEwGAwBU6Da62NLZdHdSGBuHI0GQN7fQnQ8nrYh1XaACwF-UuhYlj3l+t+pIB2sMSthiARqMxuONpMZK1KzuU6mITOO7uGt2lz0a9beJkstmVDlc3lmeUgRdIABsttXFJhbAAjuW9c6t2wTWaLQuU4gAHZ71vL0fSHcBEzfcckmradKVjeNh0g5slSAlckAADizAZXy7D9iySb9zUva9ECw9DW1AhskNDYNQxgyca3LFlZxopMKHkIA
\begin{tikzcd}
e_{\mathcal{G}} \cdot \pi(x) \arrow[r, Rightarrow, no head] \arrow[dd, "q"'] & e_{\mathcal{G}} \cdot \pi(x \cdot e_H) \arrow[d, "q"']                                              &                                                                   &                                       \\
                                                                             & \pi(e_{\mathcal{G}}\cdot (x \cdot e_{H})) \arrow[r, "\pi(\eta_{\mathscr{P}})"] \arrow[d, "\pi(b)"'] & \pi(x \cdot e_{H}) \arrow[r, Rightarrow, no head]                 & \pi(x) \arrow[d, Rightarrow, no head] \\
 \pi(e_{\mathcal{G}} \cdot x) \arrow[r, Rightarrow, no head]                 & \pi((e_{\mathcal{G}} \cdot x) \cdot e_H) \arrow[r, Rightarrow, no head]                             & \pi(e_{\mathcal{G}} \cdot x) \arrow[r, "\pi(\eta_{\mathscr{P}})"] & \pi(x)                               
\end{tikzcd}
    \]
    where the left commutative block is diagram \eqref{diag equivariant bundle}.
    Hence, the diagram \eqref{diag: equi-prin-bundle diag 2} is commutative if and only if the outer diagram is commutative.
    After resolving the upper row using the relation $\pi(\eta_\mathscr{P}) \circ q = \eta_{\mathscr{X}}$, we get the following diagram
    \[
    % % https://tikzcd.yichuanshen.de/#N4Igdg9gJgpgziAXAbVABwnAlgFyxMJZARgBoAGAXVJADcBDAGwFcYkQYB9YAHR4Ft6OABYBjJsADiAX2kACPqKgQcCnmiwAKAB5qlKuVwASAShDTS6TLnyEUAJgrU6TVuz4ade5aq7Aj0mYWVth4BETkpMTODCxsiCBqnn58giLijFKy3gbaQZYgGKG2EU40sW4JKQJCYhIy8oo+SVp55gVFNuEOUTGu8SAerUHOMFAA5vBEoABmAE4Q-EiRIDgQSGQuce48MDj0nKm1cKJzwAAa0iA0jPQARjCMAArWYXYgjDAzOO2zC0uIADMNDWyxuWDAA2UzDun2uIGEMHoUHYkEh8P2WEYqIIbGCIHmiyQwNW60Qji2lRAAEd4bcHs9XiUEnMsONhD98YSARTQYgACzlfo7Tx8PYHXg1EQnM5PWT5P5ExCbPmChFIlEJNFscHohLQ2E61b0LE49HSSjSIA
\begin{tikzcd}
e_{\mathcal{G}} \cdot \pi(x) \arrow[r, Rightarrow, no head] \arrow[d, "q"'] & e_{\mathcal{G}} \cdot \pi(x \cdot e_H) \arrow[r, "\eta_\mathscr{X}"] & \pi(x \cdot e_{H}) \arrow[d, Rightarrow, no head] \\
 \pi(e_{\mathcal{G}} \cdot x) \arrow[rr, "\pi(\eta_{\mathscr{P}})"]         &                                                                      & \pi(x)                                        
\end{tikzcd}
\]
which is diagram \eqref{diag equivariant 2}.
\end{proof}

\begin{definition}
\label{defn: equi morphism of principal bundles}
Let $\pi \colon \mathscr{P} \to \mathscr{X}$ and $\pi' \colon \mathscr{Q} \to \mathscr{X}$ be principal $H$-bundles over a stack $\mathscr{X}$.   
A \emph{morphism of $\mathcal{G}$-equivariant principal $H$-bundles} over $\mathscr{X}$ is a morphism of principal $H$-bundles $f \colon \mathscr{P} \to \mathscr{Q}$ which is also $\mathcal{G}$-equivariant such that for every chart $U$, every object $p \in \mathscr{P}(U)$, and every $g \in  \mathcal{G}(U)$ the following diagram commutes:
\[
% https://tikzcd.yichuanshen.de/#N4Igdg9gJgpgziAXAbVABwnAlgFyxMJZABgBpiBdUkANwEMAbAVxiRAB120sAKAcwAEnAMZQIOAWgCUIAL6l0mXPkIoATOSq1GLNp24ByHgDN+Q9qPGSpM+Yux4CRDQEYt9Zq0QcuWI4JExCVNpKU5SajsQDAcVIjI3ag9dbwCLIPNuHmk5BWilR1VkF1JE7U82NMsJfT8TbJs5LRgoPngiUGMAJwgAWyQyEBwIJBLylJ8ALxgcOlzOnv7EAGZqYaQAFiSdLxAqjM5p2fmQbr7RtZHEDXHdgEcAfWMTs6XB9ZWo183LpDVZCiyIA
\begin{tikzcd}
\pi(g \cdot p) \arrow[rr, "\zeta"] \arrow[d] &                              & \pi'(f(g \cdot p)) \arrow[d, "q_f"] \\
g \cdot \pi(p) \arrow[r, "g \cdot \zeta"]    & g \cdot \pi'(f(p)) \arrow[r] & {\pi'(g \cdot f(p))\,,}            
\end{tikzcd}
\]
where $\zeta \colon \pi \Rightarrow \pi' \circ f$ is a 2-isomorphism, and $q_f$ is the $2$-arrow corresponding to the $\mathcal{G}$-equivariant morphism $f$, see diagram \eqref{diag: equivariant map 2-isomorphism}.
\end{definition}

%--------------------------------------------------------
%--------------------------------------------------------
\subsection{Category of $\mathscr{G}$-equivariant principal $H$-bundles}
\label{sec: cat of principal bundles}
%--------------------------------------------------------
%--------------------------------------------------------

The 2-category of $\mathscr{G}$-equivariant principal $H$-bundles over $\mathscr{X}$ is defined as follows:
\begin{itemize}
    \item[(I)] an object is a $\mathscr{G}$-equivariant principal $H$-bundle $\pi \colon \mathscr{P} \to \mathscr{X}$.
    
    \item[(II)] a 1-morphism $(\pi_1 \colon \mathscr{P} \to \mathscr{X} ) \to (\pi_2 \colon \mathscr{Q} \to \mathscr{X} )$ is a pair $(f,\beta)$ where $f \colon \mathscr{P} \to \mathscr{Q}$ is a $\mathscr{G}$-equivariant morphism of principal $H$-bundles and $\beta \colon \pi_1 \to \pi_2 \circ f$ is a 2-morphism. 

    \item[(III)] a 2-morphism between $(f,\beta),\,(f',\beta') \colon (\pi_1 \colon \mathscr{P} \to \mathscr{X} ) \to (\pi_2 \colon \mathscr{Q} \to \mathscr{X})$ is a 2-morphism $\alpha \colon f \to f'$ such that $(\id_{\pi_2} \star \alpha) \circ \beta = \beta'$, i.e., for each object $p$ of $\mathscr{P}$ the following diagram commutes:
    \[
    % https://tikzcd.yichuanshen.de/#N4Igdg9gJgpgziAXAbVABwnAlgFyxMJZARgBoAGAXVJADcBDAGwFcYkQAdDtLAfWIAUaAJQgAvqXSZc+QinKli1Ok1bsuPXgCYBAMyHDREqdjwEiWxcoYs2iTtz47dAcgPCupAHTjlMKADm8ESgugBOEAC2SAogOBBIZCq26hwARjA49LzAaGIgNIz0GYwACtJmciBhWAEAFjjikiDhUTE08UiWyWr2XBlZLrxoTaER0YhJnYjdNr0OmjpcTGh12SK+YkA
\begin{tikzcd}
                                          & \pi_1(p) \arrow[ld, "\beta_{p}"'] \arrow[rd, "\beta'_p"] &                   \\
\pi_2(f(p)) \arrow[rr, "\pi_2(\alpha_p)"] &                                                          & {\pi_2(f'(p))\,.}
\end{tikzcd}
    \]
\end{itemize} 

Note that as $\pi_2$ is representable by schemes, it is faithful by \cite[\href{https://stacks.math.columbia.edu/tag/04Y5}{Tag 04Y5}]{stacks-project}.
Hence, for each object $p$ in $\mathscr{P}$, the morphism $\alpha_p$ is determined by the above diagram.
Consequently, the 2-morphism $\alpha$, if it exists, is unique.

By the category of $\mathscr{G}$-equivariant principal $H$-bundles over $\mathscr{X}$, we mean 
\begin{itemize}
    \item[(I)] an object is again a $\mathscr{G}$-equivariant principal $H$-bundle $\pi \colon \mathscr{P} \to \mathscr{X}$.
    \item[(II)] a morphism is a 2-isomorphism class of 1-morphisms in the 2-category of $\mathscr{G}$-equivariant principal $H$-bundles over $\mathscr{X}$.
\end{itemize}
This is indeed a category by the observation above, see also \cite[\href{https://stacks.math.columbia.edu/tag/04ZQ}{Tag 04ZQ}]{stacks-project}

% Are we considering homotopy category of 2-Category of $\mathscr{G}$-equivariant principal $H$-bundles?

% , it follows that the 2-category of isomorphism classes of $\mathcal{G}$-equivariant principal $H$-bundles on a stack $\mathscr{X}$ over $S$ is a category.

We conclude this section by presenting its main result.

\begin{theorem}
\label{thm-equivalence}
Let $G^{\bullet}=[G^{-1} \xrightarrow{d} G^{0}]$ be a morphism of abelian group schemes over $S$ such that $G^{-1},\,G^0$ are flat and locally of finite presentation over $S$. 
Let $\mathcal{G}$ be the associated Picard stack. 
Let $X$ be a scheme over $S$ and let $G^{0}$ acts on $X$ over $S$. 
Let $H$ be an group scheme over $S$.  
Then there is an equivalence of categories between the category of $\mathcal{G}$-equivariant principal $H$-bundles on $[X/G^{-1}]$ and the category of $G^0$-equivariant principal $H$-bundles on $X$.
\end{theorem}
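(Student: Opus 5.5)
The plan is to construct two functors and check that they are mutually quasi-inverse. Let $p \colon X \to [X/G^{-1}]$ denote the canonical atlas. It is a $G^{-1}$-torsor, hence flat and locally of finite presentation, since $G^{-1}$ is.

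\textbf{Pullback functor.} Given a $\mathcal{G}$-equivariant principal $H$-bundle $\mathscr{P} \to [X/G^{-1}]$, set $P := \mathscr{P} \times_{[X/G^{-1}]} X$. As in \Cref{lemma: pullback of a principal bundle is a principal bundle}, this is a principal $H$-bundle on $X$; the same argument works for flat, locally finitely presented atlases. To define the $G^0$-action on $P$, note that there is a natural morphism of group stacks $G^0 \to \mathcal{G}$, sending an object $g_0 \in G^0(U)$ to itself. Composing with this morphism gives an action of $G^0$ on $\mathscr{P}$. The map $p$ is strictly $G^0$-equivariant, because on the prestack level the $\mathcal{G}$-action on objects is just $\sigma$. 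The $G^0$-action on $P$ is then the fibre-product action: $g_0$ sends $(y, x, \alpha\colon \pi(y) \to p(x))$ to $(g_0 y,\ g_0 x,\ q^{-1}\circ (g_0\cdot\alpha))$.

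Several checks are needed here:
\begin{itemize}
\item the associativity and unit 2-arrows of $\mathcal{G}$ restricted to $G^0$ are identities on the prestack;
\item $P$ is an honest scheme, since $\pi$ is representable, so the action is a genuine action;
\item the commutativity 2-arrow $b$ makes the $G^0$- and $H$-actions on $P$ commute.
\end{itemize}
The last point uses diagram \eqref{diag equivariant bundle} and the faithfulness of $\pi$. On morphisms, a 1-morphism $(f,\zeta)$ pulls back to a map $P \to Q$, and 2-isomorphic 1-morphisms give equal maps by faithfulness. So we obtain a functor $\Psi$.

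\textbf{Descent functor.} Conversely, let $P \to X$ be a $G^0$-equivariant principal $H$-bundle. Restricting along $d$ gives a $G^{-1}$-action on $P$ commuting with $H$ and covering the $G^{-1}$-action on $X$. Set $\mathscr{P} := [P/G^{-1}]$. Since $P = X\times_{[X/G^{-1}]}[P/G^{-1}]$, and representability by schemes, flatness, finite presentation and the torsor isomorphism can all be checked after flat base change, $[P/G^{-1}] \to [X/G^{-1}]$ is a principal $H$-bundle. For this I would use descent for affine morphisms when $H$ is affine; in general, representability by algebraic spaces is the honest output.

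The $\mathcal{G}$-action on $[P/G^{-1}]$ is built exactly as the action on $[X/G^{-1}]$ constructed earlier. One writes it on prestacks ($(g_0,y)\mapsto g_0 y$ on objects and $(g_1,g_1')\mapsto g_1g_1'$ on morphisms) and then stackifies. The resulting action strictly commutes with $H$ on the prestack level, so $b$, $q$, $\mu$ and $\eta$ are all identities, and the coherence diagrams hold trivially. This gives a functor $\Phi$.

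\textbf{Quasi-inverse checks.}
\begin{itemize}
\item $\Psi\Phi(P) \cong P$ canonically, by the cartesian square above, and the $G^0$-actions agree by construction.
\item For $\Phi\Psi(\mathscr{P}) \simeq \mathscr{P}$, one uses that $\mathscr{P} \simeq [P/G^{-1}]$ with $P = \mathscr{P}\times_{[X/G^{-1}]}X$. This holds because $P \to \mathscr{P}$ is the pullback of the $G^{-1}$-torsor $X \to [X/G^{-1}]$.
\end{itemize}

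\textbf{Main obstacle.} The hardest step is checking that this equivalence is $\mathcal{G}$-equivariant, with all 2-arrows compatible in the sense of \Cref{defn: equi morphism of principal bundles}. The issue is that the original $\mathcal{G}$-action on $\mathscr{P}$ carries nontrivial $\mu$, $\eta$, $b$ and $q$, whereas the rebuilt action is strict. I would handle this by showing that a $\mathcal{G}$-action on a stack over $[X/G^{-1}]$ is determined, up to unique 2-isomorphism, by its restriction to $G^0$ together with the compatibility with $G^{-1}$-morphisms. The reason is that $G^0 \to \mathcal{G}$ is an epimorphism of stacks (essentially surjective locally), so data on $\mathcal{G}$ descends from data on $G^0$.

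Full faithfulness then follows from two facts:
\begin{itemize}
\item morphisms of bundles over $[X/G^{-1}]$ correspond to $G^{-1}$-equivariant morphisms over $X$, by descent along $p$;
\item $\mathcal{G}$-equivariance reduces to $G^0$-equivariance, by the same epimorphism argument together with the faithfulness of $\pi_2$ noted in \Cref{sec: cat of principal bundles}.
\end{itemize}
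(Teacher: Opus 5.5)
Your proposal follows the paper's argument: pull back along the atlas $X\to[X/G^{-1}]$ with the $G^0$-action induced through $G^0\to\mathcal{G}$, and conversely form $[P/G^{-1}]$ with the $H$- and $\mathcal{G}$-actions written on prestacks and stackified, so that $b$, $q$, $\mu$, $\eta$ are identities; in places you are more careful than the paper (an fppf rather than smooth atlas, only algebraic spaces unless $H$ is affine, and explicit use of \eqref{diag equivariant bundle} to make the $G^0$- and $H$-actions on $P$ commute). The one step you should write out in $\Phi\Psi(\mathscr{P})\simeq\mathscr{P}$, which the paper also leaves implicit, is that the torsor $G^{-1}$-action on $P=\mathscr{P}\times_{[X/G^{-1}]}X$ coincides with $d$ followed by your $G^0$-action; this follows by applying the $\mathcal{G}$-action to the arrows $g_1\colon 0\to d(g_1)$ of $\mathcal{G}$ and using naturality of $q$ together with \eqref{diag equivariant 2}.
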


% \begin{theorem}
% \label{thm-equivalence}
% Let $G^{\bullet}=[G^{-1} \xrightarrow{d} G^{0}]$ be a morphism of abelian group schemes over $S$ such that $G^{-1},\,G^0$ are flat and locally of finite presentation over $S$. 
% Let $\mathcal{G}$ be the associated Picard stack. 
% Let $X$ be a scheme over $S$ and let $G^{0}$ acts on $X$ over $S$. 
% Let $H$ be an group scheme over $S$.  
% Then there is a one-to-one correspondence between the isomorphism classes of $\mathcal{G}$-equivariant principal $H$-bundles on $[X/G^{-1}]$ and the isomorphism classes of $G^0$-equivariant principal $H$-bundles on $X$.
% \end{theorem}

\begin{proof}
Suppose we are given a $G^0$-equivariant principal $H$-bundle $P\to X$. 
Let $\mathscr{X}:=[X/G^{-1}]$, and define $\mathscr{P}:=[P/G^{-1}]$. 
Note that the following square is $2$-cartesian:
\[
% https://tikzcd.yichuanshen.de/#N4Igdg9gJgpgziAXAbVABwnAlgFyxMJZARgBoAGAXVJADcBDAGwFcYkQAdDgW3pwAs4AYwBOwAAoBfEJNLpMufIRRli1Ok1bsuvAcLEANSV1I1Z87HgJFyFdQxZtEIcTLkgMlpTdJqaDrWcDGXUYKABzeCJQADMRCG4kWxAcCCRicxA4hKQAJhpUpMzsxMQAZgK0xAz3ErzKpDLJSkkgA
\begin{tikzcd}
P \arrow[r] \arrow[d] & \mathscr{P} \arrow[d] \\
X \arrow[r]           & {\mathscr{X}\,,}     
\end{tikzcd}
\]
where, by the 2-Yoneda lemma, $X \to \mathscr{X}$ (similarly $P \to \mathscr{P}$) is the canonical morphism corresponding to the trivial principal $G^{-1}$-bundle over $X$ and the action map of $G$ on $X$.
% One way to prove this is first to show the same diagram at the level of prestacks and then use the fact the stackification of fibered diagram are fibered \cite[\href{https://stacks.math.columbia.edu/tag/04Y1}{Tag 04Y1}]{stacks-project}.
% https://mathoverflow.net/questions/235364/do-equivariant-morphisms-induce-representable-maps-of-quotient-stacks
We first show that the morphism $\mathscr{P} \to \mathscr{X}$ is representable by schemes. 
Suppose $T \to \mathscr{X}$ is a morphism, where $T$ is a scheme. 
Consider the following $2$-cartesian diagram
\begin{equation}
\label{diag: cart-diag for rep-by-schemes}
% https://tikzcd.yichuanshen.de/#N4Igdg9gJgpgziAXAbVABwnAlgFyxMJZAZgBoAGAXVJADcBDAGwFcYkQAdDgW3pwAs4AYwBOwAAoBfEJNLpMufIRRkAjNTpNW7LrwHCxADUldSAOhlyQGbHgJEATBQ0MWbRJx59BoiZID6hpbytkqOpOo0rtoeQbIhivYoqhEuWu4gACoABFx43PD+wLreBsDGktlxVjaJysgpVFHp7Dl5WAVwRSX6vhW5Xr1iUoHB1gp29eSpzW6tY7WTRNNNmnMebRz5hcWDPkaSlT37fjIaMFAA5vBEoABmIhDcSNMgOBBIqvEgD09ITm8PohyN9fs9EGRAZ9QY9wQD3khiDC-ogACw0BEQ5HggCsGKBDmxSDxULRRLR+KQADZySTMQB2cn0ymIGmUSRAA
\begin{tikzcd}
T \times_{\mathscr{X}} \mathscr{P} \arrow[d] & T \times_{\mathscr{X}} P \arrow[r] \arrow[d] \arrow[l] & P \arrow[r] \arrow[d] & \mathscr{P} \arrow[d] \\
T                                            & T \times_{\mathscr{X}} X \arrow[r] \arrow[l]                       & X \arrow[r]                       & {\mathscr{X}\,.}     
\end{tikzcd}
\end{equation}
% Idea for commutative diagram \cite[\href{https://stacks.math.columbia.edu/tag/04XD}{Tag 04XD}]{stacks-project}
By \cite[Theorem 3.1.10 (Algebraicity of Quotient Stacks)]{alper2025stacks}, the morphism $X \to \mathscr{X}$ is a principal $(G^{-1})$-bundle and, in particular, is smooth, surjective and representable by schemes.
Hence, $T \times_{\mathscr{X}} X$ and $T \times_{\mathscr{X}} P$ are schemes.
% \[
% % https://tikzcd.yichuanshen.de/#N4Igdg9gJgpgziAXAbVABwnAlgFyxMJZABgBoBGAXVJADcBDAGwFcYkQAVAAgB0e8AtvAD6wPgPo4AFnADGAJ2AANAL4rePCdLmKACipArS6TLnyEU5CtTpNW7DoeMgM2PASJXiNhizaJODUERMU1JGQVlNS4lJxM3cyIybxpfewDuPmC4UXFwnSj1PO1I-WFYlRsYKABzeCJQADN5CAEkMhAcCCRyIyaWtsQAJhounr6QZtakAGZR7sRiCanBuc6FocqVIA
% \begin{tikzcd}
% T \times_{\mathscr{X}} P \arrow[d] \arrow[r] & T \times_{\mathscr{X}} X \arrow[d] \\
% T \times_{\mathscr{X}} \mathscr{P} \arrow[r]             & T                                 
% \end{tikzcd}
% \]
Since $T \times_{\mathscr{X}} X \to T$ is a surjective smooth morphism of schemes, applying \cite[Proposition 2.3.17 (Descent Criterion for an fppf Sheaf to be a Scheme)]{alper2025stacks} to the left square in diagram \eqref{diag: cart-diag for rep-by-schemes}, it follows that $T \times_\mathscr{X} \mathscr{P}$ is a scheme. 
Hence, $\mathscr{P} \to \mathscr{X}$ is  representable by schemes.

% Since the morphism $X \to \mathscr{X}$ is flat, locally of finite presentation and surjective, by \cite[\href{https://stacks.math.columbia.edu/tag/04ZP}{Tag 04ZP}]{stacks-project} and \cite[\href{https://stacks.math.columbia.edu/tag/04XD}{Tag 04XD}]{stacks-project}, it follows that $\mathscr{P}\to \mathscr X$ is representable by algebraic spaces, flat, locally of finite presentation and surjective.

\noindent
To define the action of $H$ on $\mathscr P$, we first define an action on $H$ on $\mathscr{P}^{\rm pre}:=[P/G^{-1}]^{\rm pre}$ induced by the action of $H$ on $P$:
\begin{align*}
    \mathscr{P}^{\rm pre} \times_S H \to \mathscr{P}^{\rm pre}\,. 
\end{align*}
For a scheme $T \to S$, the action on objects is given by $(p \,, h) \mapsto p \cdot h$, and the action on morphisms  is given by $(p_1 \xrightarrow{g} p_2 \,, h \xrightarrow{\id} h) \mapsto (p_1 \cdot h \xrightarrow{g} p_2 \cdot h)$, where $p \,, p_1 \,, p_2 \in \mathscr{P}(T), h \in H(T)$, and $g \in G^{-1}(T)$. 
Note that the action of $H$ is well defined at the level of morphisms, since
% $$
%     g \cdot (p_1 \cdot h) 
%         = d(g) \cdot (p_1 \cdot h)
%         = (d(g) \cdot p_1) \cdot h
%         = (g \cdot p_1) \cdot h
%         = p_2 \cdot h
% $$
the action of $G^0$ on $P$ commutes with the action of $H$.
By stackification, we get a strict action of $H$ on $\mathscr{P}$.

\noindent
Consider the following 2-cartesian diagram
\begin{equation}
\label{diag: pull-diag principal bundle}
% https://tikzcd.yichuanshen.de/#N4Igdg9gJgpgziAXAbVABwnAlgFyxMJZABgBpiBdUkANwEMAbAVxiRAAUACAHW7wFt4AfQDKnABIgAvqXSZc+QigCM5KrUYs2XXgOEANTu2myQGbHgJEATGur1mrRBxNyLiomWXqHW57346HAALOABjACdgdikePixBOFEJVzN5SyVkVW97TScQAKDQyOipXQThYEKQ8Kj9KVjq4qiY1PMFKxRbHI1HNiba0rb0jxQAZjtevxB9YfdO5Ame33yBkvreUgA6aXUYKABzeCJQADMIiH4kMhAcCCQJqacwJgYGagY6ACMYBnYRzogBgwU44VLnS7Xah3JCqJ5IF5vD7fX7-eZKIEgsEyM4XK6IOEwxAAFlyfUQiPeQJRfwBGOBoPBeNh0PuiFs8Iprypnx+tPRbAZ2NMEPxHKJAFYyX5Kci+WiOvSsUzIYhHkTSZzZdT5XTBcqcSBRUhNZLpc9uXLUXrnEKVWLWUgAGyG42IJ2OxAAdldzMQUtubJ9FCkQA
\begin{tikzcd}
P \times_S H \arrow[d] \arrow[r] & P \times_X P \arrow[d] \arrow[r]                      & P \arrow[d] \arrow[r] & X \arrow[d]      \\
\mathscr{P} \times_S H \arrow[r] & \mathscr{P}\times_{\mathscr{X}} \mathscr{P} \arrow[r] & \mathscr{P} \arrow[r] & {\mathscr{X}\,.}
\end{tikzcd}
\end{equation}
Since the morphism $X \to \mathscr{X}$ is smooth and surjective, by 
% smooth implies flat \cite[\href{https://stacks.math.columbia.edu/tag/01VF}{Tag 01VF}]{stacks-project}
% smooth implies locally of finite presentation \cite[\href{https://stacks.math.columbia.edu/tag/01VE}{Tag 01VE}]{stacks-project} 
\cite[\href{https://stacks.math.columbia.edu/tag/04XD}{Tag 04XD}]{stacks-project}, it follows that $\mathscr{P}\to \mathscr X$ is flat, locally of finite presentation and surjective, as these properties hold for $P \to X$. 
Note that the morphism $P \times_X P \to \mathscr{P} \times_{\mathscr{X}} \mathscr{P}$ is smooth and surjective, since $P \to \mathscr{P}$ is as well.
Hence, by \cite[\href{https://stacks.math.columbia.edu/tag/04XD}{Tag 04XD}]{stacks-project}, it follows that the morphism $\mathscr P\times_S H\to \mathscr P\times_{\mathscr X} \mathscr P$ is an equivalence.
Hence, $\mathscr P\to \mathscr{X}$ is a principal $H$-bundle.

\noindent
% Clearly, there is an action of $\mathcal{G}$ on $\mathscr P$ such that $\mathscr P\to \mathscr{X}$ is $\mathscr{G}$-equivariant.
We now construct an action of $\mathcal{G}$ on $\mathscr{P}$ and $\mathscr{X}$ commuting with the action of $H$, such that the morphism $\mathscr{P} \to \mathscr{X}$ is $\mathcal{G}$-equivariant in the sense of \Cref{defn: equivariant map}.
Following \Cref{sec: action of picard stacks}, we first define the action of 
$\mathcal{G}^{\rm pre}$ on $\mathscr{P}^{\rm pre}$ and $\mathscr{X}^{\rm pre}$ at the prestack level and then pass to its stackification. 
This yields an action of $\mathcal{G}$ on $\mathscr{P}$ and $\mathscr{X}$ such that the morphism $\mathscr{P} \to \mathscr{X}$ is $\mathcal{G}$-equivariant, since the following diagram
\[
% https://tikzcd.yichuanshen.de/#N4Igdg9gJgpgziAXAbVABwnAlgFyxMJZABgBpiBdUkANwEMAbAVxiRAB12BbOnACwDGjYAHEAvgD1gnAE5cABGhkwx8zni7wA+gGU13XnzgCZwAAqTp7OYuViQY0uky58hFGQCMVWoxZtOHn4hBlFLWQUlFX0NbT1Aw2NTAA1w60i7BycQDGw8AiIAJnIfemZWRA4DfiTzNJso+0dnPLci0m9qMv9KhJqTYFSpCNsVBx8YKABzeCJQADMZCC4kMhAcCCRPagY6ACMYBjMXfPcQGSwpvhwshaWVxGL1zcQAZmaQReWt6g2kV66fgqIDoWisQUEwnEpH0ENqqSa2S+DzWf0egPKbFB4MMITCpD6RgGFiaFDEQA
\begin{tikzcd}
\mathcal{G}^{\rm pre} \times_S \mathscr{P}^{\rm pre} \arrow[d] \arrow[rr, "{a_{\mathcal{G},\mathscr{P}}}"] &  & \mathscr{P}^{\rm pre} \arrow[d] \\
\mathcal{G}^{\rm pre} \times_S \mathscr{X}^{\rm pre} \arrow[rr, "{a_{\mathcal{G}, \mathscr{X}}}"]          &  & \mathscr{X}^{\rm pre}          
\end{tikzcd}
\]
commutes strictly.
Similarly, the strict commutativity of the diagram
\[
% https://tikzcd.yichuanshen.de/#N4Igdg9gJgpgziAXAbVABwnAlgFyxMJZABgBpiBdUkANwEMAbAVxiRAB12BbOnACwDGjYAHEAvgD1gnAE5cABGhkwx8zni7wA+gGU13XnzgCZwAAqTp7OYuWr1WTXF3yAEiDGl0mXPkIoyAEYqWkYWNk4efiEGUUtZBSUVfQ1tPUjDY1MLKQTbFQ8vEAxsPAIiACZyEPpmVkQOA34s83jrRLsUxzS3Qu9Sv0rSYOpa8IaM5pNW3Pb8sQ8QmCgAc3giUAAzGQguJDIQHAgkQOoGOgAjGAYzHzL-EBksFb4cEFGw+saovjlgLCgYi0Vh+MTi9nYqTg8jowNcC08Wx2e0QVUOx0QAGYPnU2LDgPC+iBtrsTtQjkhsaFcQ18ZNBMJxKR6S0LAiiiSUQcKaiceMQHSmgzYkyWdM2V0nPofn8AUD3GIKGIgA
\begin{tikzcd}
\mathcal{G}^{\rm pre} \times_S \mathscr{P}^{\rm pre} \times_S H \arrow[d, "\mathrm{id}_{\mathcal{G}} \times a_{H}"'] \arrow[rr, "{a_{\mathcal{G},\mathscr{P}} \times \mathrm{id}_H}"] &  & \mathscr{P}^{\rm pre} \times_S H \arrow[d, "a_{H}"] \\
\mathcal{G}^{\rm pre} \times_S \mathscr{P}^{\rm pre} \arrow[rr, "{a_{\mathcal{G},\mathscr{P}}}"]                                                                                      &  & \mathscr{P}^{\rm pre}                              
\end{tikzcd}
\]
implies that, after stackification, the action of $\mathcal{G}$ and $H$ on $\mathscr{P}$ commutes in the sense of \Cref{def: equivariant bundle}.

Conversely, let $\pi \colon \mathscr{P} \to \mathscr{X}$ be an $\mathcal{G}$-equivariant principal $H$-bundle on $\mathscr{X}$. 
Define $P := \mathscr{P}\times_{\mathscr X}X$. 
% Using diagram~\eqref{diag: pull-diag principal bundle}, it follows from \cite[\href{https://stacks.math.columbia.edu/tag/04XD}{Tag 04XD}]{stacks-project} that the morphism $P \to X$ is a principal $H$-bundle, where the action morphism $P \times_S H \to P$ is obtained by pulling back the action morphism $\mathscr{P} \times_S H \to \mathscr{P}$ along the morphism $P \to \mathscr{P}$.
Then, by \Cref{lemma: pullback of a principal bundle is a principal bundle}, it follows that $P$ is a principal $H$-bundle with the action morphism $P \times_S H \to P$ obtained by pulling back the action morphism $\mathscr{P} \times_S H \to \mathscr{P}$ along the morphism $P \to \mathscr{P}$.

\noindent
We now define an action of $G^0$ on $P$ which commutes with the action of $H$, and for which the morphism $P \to X$ is $G^0$-equivariant.
Consider the following $2$-commutative diagram
\[
% https://tikzcd.yichuanshen.de/#N4Igdg9gJgpgziAXAbVABwnAlgFyxMJZARgBoAGAXVJADcBDAGwFcYkQAdDgW3pwAsAxk2ABxAL4ACLnm7wA+gGVJABWkdZC5QAkQ40uky58hFGWLU6TVuy68BwxmKkyscuEtV6DIDNjwERABMFJYMLGyIIGqu7p66+ob+JsGkFjThNlEqXKQ0ib5GAabI5Glh1pEgogB6wOQuGm5aXgV+xoEoZVQZley15OqaHsoxTXE6epYwUADm8ESgAGYAThDcSGUgOBBIxAWr65s0O0hBB2sbiGTbu4gAzBdHiCG3SI8+h1cALCd3+59LkgAKx-TZPK6gt6Ib7iSjiIA
\begin{tikzcd}
G^0 \times_S P \times_S H \arrow[r] \arrow[d] & \mathcal{G} \times_S P \times_S H \arrow[d] \arrow[r] & P \times_S H \arrow[d] \\
G^{0} \times_S P \arrow[r]                    & \mathcal{G} \times_S P \arrow[r]                      & {P\,,}                
\end{tikzcd}
\]
where the right $2$-commutative square is obtained by pulling back the diagram \eqref{diag: G action commutes with H action} along the morphism $P \to \mathscr{P}$, and the left $2$-commutative square is a pullback square along the natural morphism $G^{0} \times_S P \to \mathcal{G} \times_S P$.
Then the $G^{0}$-action on $P$ is defined by the composition $G^{0}\times_S P \to \mathcal{G} \times_S P \to P$. 
(Note that we still need to show that this is indeed an action. This can be done by pulling back diagram~\eqref{diag: action of a picard stack}, exactly as in the argument we are currently carrying out, which will be completed shortly.)
Since the outer diagram lies in the category of schemes, we get that the outer diagram is commutative.
Hence, the actions of $G^{0}$ and $H$ on $P$ commute.
Similarly, pulling back the diagram \eqref{diag: equivariant map 2-isomorphism} along the morphism $X \to \mathscr{X}$, one can show that $P \to X$ is $G^{0}$-equivariant.

If $P \to Q$ is a $G^0$-equivariant isomorphism of $G^0$-equivariant principal $H$-bundles over $X$, then the following $2$-cartesian diagram
\[
% https://tikzcd.yichuanshen.de/#N4Igdg9gJgpgziAXAbVABwnAlgFyxMJZABgBpiBdUkANwEMAbAVxiRAAUQBfU9TXfIRRkAjFVqMWbZOwD0AcQB6wALQiuFbrxAZseAkRHlx9Zq0QgAilr57Bh0mOqmpF5JYXK1GgDo-S1BrUMFAA5vBEoABmAE4QALZIZCA4EEgATDzRcYmIRilpiADMWSCxCUnUqUjq2uW56VWFJRRcQA
\begin{tikzcd}
P \arrow[r] \arrow[d]  & Q \arrow[d]     \\
{[P/G^{-1}]} \arrow[r] & {[Q/G^{-1}]\,,}
\end{tikzcd}
\]
together with \cite[\href{https://stacks.math.columbia.edu/tag/04XD}{Tag 04XD}]{stacks-project} implies that the morphism $[P/G^{-1}] \to [Q/G^{-1}]$ is an equivalence. 
Furthermore, using stackification arguments as before, one easily sees that this is a $\mathcal{G}$-equivariant equivalence.
The converse follows from arguments similar to those in \Cref{lemma: morphism of principal bundles is an isomorphism}.
\end{proof}

\begin{remark}
\label{rem: trivial bundle under equivalence}
If $\mathscr{X} \times_S H$ is the trivial principal $H$-bundle over $\mathscr{X}$, then its pullback to $X$ is the trivial principal $H$-bundle $X \times_S H$.
Therefore, the equivalence in \Cref{thm-equivalence} sends the trivial bundle to the trivial bundle.
\end{remark}

%--------------------------------------------------------
%--------------------------------------------------------
\section{Equivariant principal bundles on toric stacks and Tits buildings}
\label{sec: toric principal bundles on toric stacks}
%--------------------------------------------------------
%--------------------------------------------------------

In this section, we classify toric principal bundles over a toric Deligne–Mumford stack. 
We begin by recalling the construction of toric Deligne–Mumford stacks.

\begin{definition}
\label{defn: stacky fan}
A \emph{stacky fan} is a triple $\boldsymbol{\Sigma} = (N, \Sigma, \beta)$ with the following components:
\begin{enumerate}
\item $N$ is a finitely generated abelian group.

\item $\Sigma$ is a rational simplicial fan in $N_{\mathbb{Q}} = N \otimes_{\mathbb{Z}} \mathbb{Q}$ with $n$ rays, denoted by $\rho_1,\rho_2,\ldots, \rho_n$, which span $N_{\mathbb{Q}}$.

\item $\beta \colon \mathbb{Z}^n \to N$ is a homomorphism such that for any $1\leq i\leq n$, the element $\beta(e_i)$ is on the ray $\rho_i$, where $e_1,e_2,\ldots, e_n$ is the canonical basis of $\mathbb{Z}^n$.
\end{enumerate}
\end{definition}

We briefly recall the construction of the toric Deligne--Mumford stack associated with a stacky fan $\boldsymbol{\Sigma} = (N,\Sigma, \beta \colon \Z^n \to N)$. 
From the morphism $\beta \colon \mathbb{Z}^n \to N$, we can construct $\beta^{\vee} \colon (\mathbb{Z}^n)^{\vee} \to DG(\beta)$ \cite[\S 2]{BCS}. 
This, in turn, induces a morphism 
$$
    G := {\rm Hom}_{\mathbb{Z}}(DG(\beta),\mathbb{C}^*) \longrightarrow (\mathbb{C}^*)^n.
$$ 
Let ${\mathbb{C}}[z_{1}, \ldots, z_{n}]$ be the coordinate ring of ${\mathbb{A}}^n$. 
Let
\[
    J_\Sigma := 
    \left\langle \prod\limits_{\rho_i\notin \sigma (1)}z_{i} : \sigma \in \Sigma \right\rangle,
\]
where $\sigma(1)$ denotes the collection of rays of $\sigma$, for $\sigma\in \Sigma$.
Define 
$
    Z := \mathbb{A}^n \setminus V(J_\Sigma).
$ 
Then $Z$ is a smooth toric variety with the standard torus $(\C^*)^n \subset \mathbb A^n$, see \cite[Remark 7.6]{FMN}. 
% Z is smooth, see \cite[Page 198]{BCS}
Then the toric Deligne--Mumford stack associated with the stacky fan $\boldsymbol{\Sigma}$ is defined as $\mathscr X(\boldsymbol{\Sigma}):=[Z/G]$. 
Note that the stacky torus $\mc T=[(\C^*)^n/G]$ acts on $\mathscr X(\boldsymbol{\Sigma})$.
The toric Deligne--Mumford stack $\mathscr{X}(\boldsymbol{\Sigma})$ is a toric orbifold if and only if $N$ is free (see \cite[Lemma 7.15]{FMN}),
%If the torsion $N_{\rm tor} = 0$, then $\mathscr X(\boldsymbol{\Sigma})$ is a toric orbifold \cite[Lemma 7.15]{FMN}, 
and the toric Deligne Mumford structure of $\mathscr{X}(\boldsymbol{\Sigma})$ (in the sense of \cite[Definition 3.1]{FMN}) uniquely determines the stacky fan $\boldsymbol{\Sigma}$ \cite[Theorem 7.17]{FMN}. 
Otherwise, it is not unique. 
Following \cite{FMN}, we assume throughout that a Deligne--Mumford stack is separated and of finite type over $\mathbb{C}$.

For a ray $\rho\in \Sigma$, let $v_{\rho}\in N/N_{\rm tor}$ be the primitive generator of $\rho$ in $N/N_{\rm tor}\otimes \Q=N_{\Q}$. 
We define $a_{\rho}\in \N$ by $\beta(e_i) = a_{\rho_i}v_{\rho_i}$. 
Geometrically, $a_{\rho}$ is the cardinality of the generic stabilizer of the divisor corresponding to $\rho$ in the rigidification  $\mathscr{X}(\boldsymbol{\Sigma})^{\rm rig}$ of $\mathscr{X}(\boldsymbol{\Sigma})$ \cite[Remark 7.19]{FMN}. 
Note that this also gives an intrinsic characterization of $a_{\rho}$ (i.e., only depending on the toric structure of $\mathscr X(\boldsymbol{\Sigma})$ and not depending on the stacky fan $\boldsymbol{\Sigma}$).

Suppose $z_0$ is a fixed point in the open orbit of $Z$. 
% It provides an identification of the torus $(\C^*)^n$ with the open orbit via $t \mapsto t \cdot z_0$.
Since, by \Cref{thm-equivalence}, the category of $\mc{T}$-equivariant principal $H$-bundles on $\mathscr{X}(\boldsymbol{\Sigma})$ is equivalent to the category of $(\C^*)^n$-equivariant principal $H$-bundles on $Z$. 
We may introduce the following definition:

\begin{definition}
\label{defn: framed principal bundle}
A \emph{framed $\mathcal{T}$-equivariant principal $H$-bundle} $\mathscr{P}$ over $\mathscr{X}(\boldsymbol{\Sigma})$ is the framed $(\mathbb{C}^*)^n$-equivariant principal $H$-bundle $\mathscr{P}_Z := \mathscr{P} \times_{\mathscr{X}(\boldsymbol{\Sigma})} Z$ on $Z$, i.e., $\mathscr{P}_Z$ together with a choice of a point $p_0$ in the fiber $(\mathscr{P}_Z)_{z_0}$.
\end{definition}

A framing of a $\mathcal{T}$-equivariant principal $H$-bundle $\mathscr{P}$ on $\mathscr{X}(\boldsymbol{\Sigma})$ can also be viewed as a choice of a point $[p_0]$ in the fiber $(\mathscr{P})_{[z_0]}$, where $[z_0]$ denotes the image $z_0$ under the map $(\C^*)^n \to \mathcal{T}$.
This can be seen from the following 2-cartesian cube:
\[
% https://tikzcd.yichuanshen.de/#N4Igdg9gJgpgziAXAbVABwnAlgFyxMJZABgBoAWAXVJADcBDAGwFcYkQAtEAX1PU1z5CKAEwVqdJq3YAdGQFt6OABZwAxgCdgADW4AKOQCMIjKHACe8442ByAylgDmi7gEoefEBmx4CRMiISDCxsiCAGMgDCAHoAVK7RhLz8PkJEYoE0wdJhcooqakzAACrcHimCfigAzKTVQVKhIHlKqprAAAplyV4CvsLItQCMDSGyCq0a8sB2aDBq+nKR7j3elQNDpCNZjeP5bVpdAPocAARyePLwR7YTKuqH3Nznd8pTM3MLEcvlvalVyE29R2Y1yrwenW4J1+a36RE2xFGOWar3es3miyiKwkMCgjngRFAADMNBB5EgyCAcBAkEMeiSyUhyDRqbT6aTyYgAKwsmmIWqSUEgZAALyOiN+DM5ADZeUgROzGYgAOxyxDERWcgWsxCbKn0LCMdjKCAQADWko58rVlJwBqNYRN5stStlVL5ys1SB57qZIORyDQ4soLplaq5XsQYl9-MjAA41dHDDAwFAkABaaqUxj0ZOMDp9NJhDROZQ4ED+ppi4ihpAJmM+5OppBZyvjLBp+Nqt2MLBgJpQehwZS42sqtXkbiUbhAA
\begin{tikzcd}
                                     & \mathrm{Spec}(\C) \arrow[ldd, "z_0"', bend right] \arrow[rrd, "\id", bend left] \arrow[d, dashed] &                                  &                                                               \\
                                     & \mathscr{P}_Z \times_{\mathscr{P}} \mathrm{Spec}(\C) \arrow[ld] \arrow[dd] \arrow[rr]             &                                  & \mathrm{Spec}(\C) \arrow[ld, "{[z_0]}"] \arrow[dd, "{[p_0]}"] \\
(\C^*)^n \arrow[dd, hook] \arrow[rr] &                                                                                                   & \mathcal{T} \arrow[dd, hook]     &                                                               \\
                                     & \mathscr{P}_Z \arrow[ld] \arrow[rr]                                                               &                                  & \mathscr{P} \arrow[ld]                                        \\
Z \arrow[rr]                         &                                                                                                   & \mathscr{X}(\boldsymbol{\Sigma}) &                                                              
\end{tikzcd}
\]
and the universal property of pullbacks.
Intuitively, we pick the unique point lying in the intersection of the fiber $(\mathscr{P}_Z)_{z_0}$ and the orbit $[p_0] = G \cdot p_0 \subset \mathscr{P}_Z$.
Hence, we will denote a framing of a $\mathcal{T}$-equivariant principal $H$-bundle $\mathscr{P}$ over $\mathscr{X}(\boldsymbol{\Sigma})$ by either $(\mathscr{P},[p_0])$ or $(\mathscr{P}_Z,p_0)$, depending on the situation.

% \begin{remark}
% Note that a framing of a $\mathcal{T}$-equivariant principal $H$-bundle $\mathscr{P}$ on $\mathscr{X}(\boldsymbol{\Sigma})$ gives a point $[z_0] \in \mathcal{T}$ and a point in the fiber $[p_0] \in (\mathscr{P})_{[z_0]}$.
% Conversely, 
% % in certain cases, a framing can be obtained from a point in the stacky torus together with a point in the fiber above it.
% % Let $\mathscr{X}(\boldsymbol{\Sigma})$ be a Deligne--Mumford orbifold with Deligne--Mumford torus $\mathcal{T}$.
% % Note that $\mathcal{T}$ is an ordinary torus.
% suppose $[z_0]$ is a fixed point in open orbit of $\mathscr{X}(\boldsymbol{\Sigma})$, that is, a point in the image of the inclusion $\mathcal{T} \hookrightarrow \mathscr{X}(\boldsymbol{\Sigma})$.
% If there exists a section $s \colon \mathcal{T} \to (\C^*)^n$
% % of the geometric quotient $(\C^*)^n \to \mathcal{T}$
% , then a framing of $\mathcal{T}$-equivariant principal $H$-bundle $\mathscr{P}$ on $\mathscr{X}(\boldsymbol{\Sigma})$ is a point $[p_0]$ in the fiber $(\mathscr{P})_{[z_0]}$.
% This can be seen by replacing the map $z_0 \colon \mathrm{Spec}(\C) \to (\C^*)^n$ with $s \circ z_0 \colon \mathrm{Spec}(\C) \to (\C^*)^n$ in the above diagram.
% Note that the framing we obtain here is with respect to the point $s([z_0]) \in Z$.
% If needed, we can use the $(\C^*)^n$-action to a obtain a framing over a fixed point $z_0 \in Z$.
% \end{remark}

\begin{theorem}
\label{thm: equivalence with piecewise linear maps}
Let $H$ be a connected reductive algebraic group over $\C$. 
There is a one-to-one correspondence between the isomorphism classes of framed $\mathcal{T}$-equivariant principal $H$-bundles on $\mathscr{X}(\boldsymbol{\Sigma})$ and piecewise linear maps $\Phi \colon |\Sigma| \to \widetilde{\mathfrak{B}}(H)$ such that $\Phi(a_{\rho} v_{\rho})$ is a lattice point of the building, where $\widetilde{\mathfrak {B}}(H)$ is the cone over the Tits building of $H$.
\end{theorem}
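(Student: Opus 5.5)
The plan is to reduce everything to the toric variety $Z$ with torus $(\C^*)^n$ and then invoke \Cref{thm: main-theorem kaveh-manon}. By \Cref{thm-equivalence}, applied with $G^{-1}=G$, $G^0=(\C^*)^n$ and $d=\beta^\vee$, the category of $\mathcal{T}$-equivariant principal $H$-bundles on $\mathscr{X}(\boldsymbol{\Sigma})=[Z/G]$ is equivalent to the category of $(\C^*)^n$-equivariant principal $H$-bundles on $Z$. By \Cref{defn: framed principal bundle}, framings correspond to framings of $\mathscr{P}_Z$ at $z_0$. Hence isomorphism classes of framed $\mathcal{T}$-equivariant bundles are in bijection with isomorphism classes of framed toric principal $H$-bundles on $Z$. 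The fan of $Z$ is the fan $\widehat{\Sigma}$ in $\R^n$ whose cones are $\widehat{\sigma}=\mathrm{cone}(e_i : \rho_i\in\sigma(1))$ for $\sigma\in\Sigma$. By \Cref{thm: main-theorem kaveh-manon}, these classes are in bijection with integral piecewise linear maps $\widehat{\Phi}\colon|\widehat{\Sigma}|\to\B(H)$.

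The second step is to identify such maps $\widehat{\Phi}$ with the maps $\Phi$ of the statement. Because $\Sigma$ is simplicial, $\beta_\R\colon\R^n\to N_\R$ restricts, for each $\sigma$, to a linear isomorphism $\widehat{\sigma}\to\sigma$ sending $e_i\mapsto a_{\rho_i}v_{\rho_i}$. These restrictions are compatible with faces and give a homeomorphism $|\widehat{\Sigma}|\to|\Sigma|$ that is linear on cones. Define $\Phi=\widehat{\Phi}\circ(\beta_\R|_{|\widehat\Sigma|})^{-1}$; the maximal torus $T_\sigma$ used for $\Phi$ on $\sigma$ is the one used for $\widehat{\Phi}$ on $\widehat\sigma$. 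Then $\Phi$ is piecewise linear. Conversely, any piecewise linear $\Phi$ gives a piecewise linear $\widehat\Phi=\Phi\circ\beta_\R$, and the two operations are mutually inverse.

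Integrality must be translated carefully, and this is the main point. The lattice $\widehat\sigma\cap\Z^n$ is the free monoid on the $e_i$ with $\rho_i\in\sigma(1)$, since $\widehat{\sigma}$ is a smooth cone. So $\widehat\Phi|_{\widehat\sigma}$ sends lattice points to $\Lambda^\vee(T_\sigma)$ if and only if each $\widehat\Phi(e_i)=\Phi(a_{\rho_i}v_{\rho_i})$ lies in $\Lambda^\vee(T_\sigma)$. Here $\Lambda^\vee(T_\sigma)$ is a group, so integrality on generators implies integrality on all of $\widehat\sigma\cap\Z^n$ by linearity.

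The one subtlety is that being a lattice point of $\B_\Z(H)$ should be independent of the chosen apartment $T_\sigma$. This holds because, by the discussion before the definition of $\B_\Z(H)$, the canonical identifications $\Lambda^\vee(T_1)\cong\Lambda^\vee(T_2)$ for tori in a common parabolic are $\Z$-linear and are compatible with the gluing of real apartments. Therefore "$\Phi(a_\rho v_\rho)$ is a lattice point of the building" is exactly the integrality of $\widehat\Phi$ on every cone containing $\rho$. Composing the bijections then gives the theorem.

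I expect the main obstacle to be the bookkeeping in the first step. One has to check that the equivalence of \Cref{thm-equivalence} respects framings and isomorphism classes of framed objects. A morphism of framed $\mathcal{T}$-bundles must correspond exactly to a framed morphism of $(\C^*)^n$-bundles on $Z$, with no residual $G$-ambiguity. I would handle this using the 2-cartesian cube following \Cref{defn: framed principal bundle}: it shows that the point $[p_0]$ determines a unique $p_0\in(\mathscr{P}_Z)_{z_0}$, and that framed morphisms pull back to framed morphisms.
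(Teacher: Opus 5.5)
Your proposal is correct and takes essentially the same route as the paper. Both arguments reduce via \Cref{thm-equivalence} to framed $(\C^*)^n$-equivariant bundles on $Z$, apply the Kaveh--Manon classification to the smooth fan of $Z$, and transport along the cone-wise bijection $\widehat{\sigma}\to\sigma$, $e_\rho\mapsto a_\rho v_\rho$, that simpliciality provides. Your additional checks, namely global injectivity of $|\widehat{\Sigma}|\to|\Sigma|$ and apartment-independence of being a lattice point, supply details the paper leaves implicit.
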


% \begin{theorem}
% There is an equivalence of categories between the category of framed $\mathcal{T}$-equivariant principal $H$-bundles on $\mathscr{X}(\boldsymbol{\Sigma})$ and the category of piecewise linear maps $\Phi \colon |\Sigma| \to \widetilde{\mathfrak {B}}_{\rm sph}(H)$ such that $\Phi(a_{\rho} v_{\rho})$ is a lattice point of the building.
% \end{theorem}

% Here $\widetilde{\mathfrak {B}}(H)$ is the cone over the Tits building of $H$ (\cite[Definition 1.5]{MR4492498}). 
% See \cite[Definition 2.1]{MR4492498} for the definition of piecewise linear maps to buildings.

\begin{proof}
By \Cref{thm-equivalence} and \Cref{defn: framed principal bundle}, the category of framed $\mc{T}$-equivariant principal $H$-bundles on $\mathscr{X}(\boldsymbol{\Sigma})$ is equivalent to the category of framed $(\C^*)^n$-equivariant principal $H$-bundles on $Z$.
By \cite[Proposition 5.1.9]{CLS}, the fan $\widetilde \Sigma$ of $Z$ is given by the collection $\{\widetilde \sigma \,|\, \sigma \in \Sigma \}$, where $\widetilde \sigma$ is the cone generated by $e_i$'s such that $\beta(e_i)\in \sigma(1)$. 
By \cite[Theorem 2.2]{MR4492498}, we get that the category of $(\C^*)^n$-equivariant principal $H$-bundles on $Z$ is equivalent to the category of integral piecewise linear maps from $|\widetilde{\Sigma}| \to \widetilde{\mathfrak{B}}(H)$.
Since $\widetilde{\Sigma}$ is a smooth fan, the integrality condition on the piecewise linear map is equivalent to saying that the images of $e_{\rho}$ are lattice points in the building.
% (Here the integrality condition means that the images of $e_{\rho}$ are lattice points of the building \cite[Definition 1.5, Definition 2.1]{MR4492498}). 
Note that the map $\widetilde \Sigma \to \Sigma$ is in fact a bijection. 
This follows from the fact that $\widetilde \sigma \to \sigma$ is a bijection, since $\Sigma$ is simplicial, i.e., the rays of $\sigma$ are linearly independent. 
Hence, we get a piecewise linear map $|\Sigma|\to \widetilde{\mathfrak{B}}(H)$.  
The last property follows from the fact that 
under $\widetilde{\sigma}\to \sigma$, the basis vectors $e_{\rho}\mapsto a_{\rho}v_{\rho}$. 
This completes the proof.
\end{proof}

Let $a,b \in \Z_{> 0}$ and let $d = \gcd(a,b)$.
The \emph{stacky projective line} is defined as the quotient stack
$$
    \mathbb{P}(a,b) 
        := \left[\left(\mathbb{C}^{2}\setminus \{(0,0)\}\right)/\mathbb{C}^*\right]\,
$$
where the action of $\C^*$ is given by
$$
    t \cdot (z_0,z_1) = (t^{a}z_0,t^bz_1)
$$
for any $t \in \C^*$ and $(z_0,z_1) \in \C^{2}\setminus \{(0,0)\}$.
The stack $\mathbb{P}(a,b)$ is a complete toric Deligne--Mumford stack with Deligne--Mumford torus $\mathcal{T} = \left[(\C^*)^2/\C^*\right] \simeq \C \times B\mu_d$, where $\mu_d$ is the finite cyclic group of order $d$.

\noindent
The projective stack $\mathbb{P}(a,b)$ is described by the following stacky fan $(N,\Sigma,\beta)$:
\begin{itemize}
\item The lattice $N$ is $\mathbb{Z}\oplus \mathbb{Z}/d\mathbb{Z}$.

\item The fan $\Sigma$ is the complete fan in $N_{\mathbb Q}\cong \mathbb Q$ with two rays $\rho_1 = \langle 1 \rangle = \mathbb Q_{\ge 0}$ and $\rho_2 = \langle -1 \rangle = \mathbb Q_{\le 0}$, together with the zero cone.

\item The map $\beta \colon \mathbb{Z}^2 \to N$ is given by $\beta(e_1)=(b',1)$ and $\beta(e_2)=(-a',1)$, where $a' = a/d$ and $b' = b/d$. 
Note that we can take $\beta(e_2) = (-a',0)$ as well.
\end{itemize}

\begin{corollary}
The set of isomorphism classes of (framed) $\mathcal{T}$-equivariant line bundles over the stacky projective line $\mathbb{P}(a,b)$ is in bijection with the set $\Z \oplus \Z$.
\end{corollary}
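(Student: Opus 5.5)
We apply \Cref{thm: equivalence with piecewise linear maps} with $H = \Gm = \GL(1)$ and then compute the relevant piecewise linear maps explicitly. Since $\Gm$ is a torus, it has a unique maximal torus, namely $\Gm$ itself. Its only parabolic subgroup is $\Gm$, so the Tits building is empty. The cone over it, $\B(\Gm)$, is therefore the single extended apartment $\Lambda^\vee_{\R}(\Gm) = \R$, with lattice points $\B_{\Z}(\Gm) = \Lambda^\vee(\Gm) = \Z$. No gluing is needed here. This matches the fact that two one-parameter subgroups $s\mapsto s^{k}$ and $s\mapsto s^{l}$ of $\Gm$ are equivalent only when $k=l$.

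Hence a piecewise linear map $\Phi \colon |\Sigma| \to \B(\Gm)$ is just a map $|\Sigma| \to \R$ that is linear on each cone of $\Sigma$. For $\mathbb{P}(a,b)$ the fan is complete in $N_{\Q}\cong \Q$, with maximal cones $\rho_1 = \Q_{\ge 0}$ and $\rho_2 = \Q_{\le 0}$. We take $N_{\R}=\R$ and the rays $\R_{\ge0}$, $\R_{\le0}$, as in the definition of piecewise linear map. The primitive generators are $v_{\rho_1} = 1$ and $v_{\rho_2} = -1$ in $N/N_{\rm tor} = \Z$, and $a_{\rho_1} = b'$, $a_{\rho_2} = a'$, read off from $\beta(e_1) = (b',1)$ and $\beta(e_2) = (-a',1)$.

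A linear map on a ray is determined by its value at any nonzero point. So $\Phi$ is determined by the pair $(\Phi(b'), \Phi(-a'))\in \R^2$, and every pair occurs, because the two rays meet only in the zero cone, where $\Phi(0)=0$ automatically. The condition in \Cref{thm: equivalence with piecewise linear maps} is that $\Phi(a_{\rho}v_{\rho})\in \B_{\Z}(\Gm)=\Z$. This says exactly that $\Phi(b')\in\Z$ and $\Phi(-a')\in\Z$. The assignment $\Phi\mapsto(\Phi(b'),\Phi(-a'))$ is therefore a bijection onto $\Z\oplus\Z$. Note that $\Phi$ need not be integral in the naive sense: $\Phi(1)=\Phi(b')/b'$ may be non-integral. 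This reflects the stacky structure.

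It remains to identify framed $\mathcal{T}$-equivariant principal $\Gm$-bundles with framed $\mathcal{T}$-equivariant line bundles. The standard equivalence takes a line bundle $L$ to its frame bundle $L\setminus 0$, and conversely takes a $\Gm$-bundle $P$ to $P\times^{\Gm}\C$. It is compatible with the $\mathcal{T}$-action and with framings. By \Cref{thm-equivalence}, it suffices to verify this on $Z=\C^2\setminus\{0\}$ with its $(\C^*)^2$-action, where it is classical.

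The only point needing care is the parenthetical ``(framed)''. The bijection with $\Z\oplus\Z$ is for framed classes. For $H=\Gm$, changing the framing $p_0\mapsto p_0 h_0$ acts via conjugation by $h_0$ (see \Cref{rem: change of framing}), which is trivial because $\Gm$ is abelian. Also, any framing can be moved to any other by the $\Gm$-action, which is an equivariant automorphism. So forgetting the framing does not change the set of isomorphism classes, and the statement holds with or without framings. I expect this bookkeeping about framings to be the only step needing care; the rest is direct.
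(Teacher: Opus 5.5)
Your proposal is correct and follows essentially the paper's proof: apply the classification theorem with $H=\Gm$, use $\B(\Gm)\cong\R$ with lattice points $\Z$ to read off the pair $(\Phi(b'),\Phi(-a'))\in\Z\oplus\Z$, and then drop the framing because the conjugation in the change-of-framing remark is trivial for the abelian group $\Gm$. Your departures are minor: you derive $\B(\Gm)=\R$ directly instead of citing Kaveh--Manon, you spell out the line bundle/$\Gm$-bundle dictionary, and you use the equivariant automorphism $p\mapsto p\cdot h_0$ directly rather than going through the functoriality part of Kaveh--Manon's theorem.
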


\begin{proof}
By the preceding theorem, the set $S$ of isomorphism classes of framed $\mathcal{T}$-equivariant line bundles over $\mathbb{P}(a,b)$ is in bijection with the set of piecewise linear maps $\Phi \colon |\Sigma| \to \B(\Gm)$ such that $\Phi(b')$ and $\Phi(a')$ are lattice points of the building.
By \cite[Example 1.14]{MR4492498}, the cone over the Tits building $\B(\Gm)$ is isomorphic to $\R$ with its lattice points are identified with the integers $\Z$.
Since there are no compatibility conditions to check, the set $S$ is in bijection with $\Z \oplus \Z$.

Let $L$ be a $\mathcal{T}$-equivariant line bundle over $\mathbb{P}(a,b)$.
We claim that any two framings of $L$ are isomorphic as framed equivariant line bundles.
A change of framing of $L$ can be realized by a bundle homomorphism of $L$ with respect to a conjugation homomorphism $\alpha$ of the group $\Gm$, see \Cref{rem: change of framing}.
Hence, by \Cref{thm: main-theorem kaveh-manon}, the piecewise linear map corresponding to two different framings of $L$ are related by composition with $\widehat{\alpha}$.
Since $\Gm$ is abelian, $\alpha$ is identity, and hence so is $\widehat{\alpha}$.
Hence, the piecewise linear maps corresponding to two different frames of $L$ are identical.
Therefore, they are isomorphic are framed equivariant line bundles. 
Thus, the set of isomorphism classes of $\mathcal{T}$-equivariant line bundles over $\mathbb{P}(a,b)$ is also in bijection with $\Z \oplus \Z$.
\end{proof}

%--------------------------------------------------------
%--------------------------------------------------------
\section{Equivariant automorphism group}
%--------------------------------------------------------
%--------------------------------------------------------

Let $H$ be a connected reductive algebraic group over $\C$.
Recall from \Cref{sec: one-para-subgroups and lattice points} that the lattice points $\B_{\Z}(H)$ in the cone over the Tits building of $H$ can be identified with the set of equivalence classes of one-parameter subgroups of $H$.
Furthermore, given an equivalence class of one-parameter subgroups of $H$, represented by $\lambda \colon \mathbb{G}_m \to H$, we can associate it with the parabolic subgroup
$$
    P_{\lambda} 
        = \left\{ h \in H 
        \mid h \lambda h^{-1} \sim \lambda \right\}.
$$
Hence, each point in $\B_{\Z}(H)$ determines a parabolic subgroup of $H$.
With this setup, we can now state the following result, which generalizes \cite[Theorem 5.5]{MR4960069}.

\begin{theorem}
Let $H$ be a connected reductive algebraic group over $\C$, and let $\scrP$ be a $\mathcal{T}$-equivariant principal $H$-bundle over $\scrXE$.
Let $(\scrP_Z,p_0)$ be a framing of $\scrP$, and let $\Phi \colon |\Sigma| \to \widetilde{\mathfrak{B}}(H)$ be the corresponding piecewise linear map, see \Cref{thm: equivalence with piecewise linear maps}. 
If $\Aut_{\mathcal{T}}(\scrP)$ is  the group of $\mathcal{T}$-equivariant automorphisms of $\scrP$ in the sense of \Cref{defn: equi morphism of principal bundles}, then
$$
    \Aut_{\mathcal{T}}(\scrP) 
        \cong \bigcap_{\rho \in \Sigma(1)} P_{\rho}\,,
$$
where $P_{\rho}$ is the parabolic subgroup in $H$ corresponding to the lattice point $\Phi(a_{\rho}v_{\rho}) \in \widetilde{\mathfrak{B}}_{\Z}(H)$. 
\end{theorem}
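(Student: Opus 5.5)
The plan is to move the whole problem to the toric variety $Z$ and compute there. By \Cref{thm-equivalence}, restricting along the atlas $Z\to\scrXE$ gives an equivalence between the category of $\mathcal{T}$-equivariant principal $H$-bundles on $\scrXE$ and the category of $(\C^*)^n$-equivariant principal $H$-bundles on $Z$. An equivalence of categories identifies automorphism groups, so
$$
    \Aut_{\mathcal{T}}(\scrP)\cong \Aut_{(\C^*)^n}(\scrP_Z).
$$
In the proof of \Cref{thm: equivalence with piecewise linear maps}, the fan $\widetilde\Sigma$ of $Z$ consists of the smooth cones $\widetilde\sigma$. The framed bundle $(\scrP_Z,p_0)$ corresponds to an integral piecewise linear map $\widetilde\Phi\colon|\widetilde\Sigma|\to\B(H)$ with $\widetilde\Phi(e_\rho)=\Phi(a_\rho v_\rho)$. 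It therefore suffices to prove, for a toric variety with fan $\widetilde\Sigma$ whose rays are $e_\rho$, that
$$
    \Aut_{(\C^*)^n}(\scrP_Z)\cong\bigcap_\rho P_{\widetilde\Phi(e_\rho)}.
$$
This is the toric-variety statement \cite[Theorem 5.5]{MR4960069}. I would either cite it or reprove it as follows.

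The proof on $Z$ starts by evaluating at the framing. An equivariant automorphism $F$ is determined by its restriction to the open orbit, because $\scrP_Z$ is separated and reduced and the open orbit is dense. On the open orbit, equivariance forces $F(t\cdot p_0)=t\cdot p_0\cdot h$, where $h\in H$ is defined by $F(p_0)=p_0\cdot h$. This gives an injective group homomorphism $\Aut(\scrP_Z)\to H$, $F\mapsto h$, with $F(p_0\cdot h')=p_0\cdot hh'$. It remains to identify the image, namely the set of $h$ for which $p\mapsto t\cdot p_0\cdot h\,h'$ extends over all of $Z$.

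Extension is checked chart by chart. Use the local equivariant trivialisations $\scrP|_{U_{\widetilde\sigma}}\cong U_{\widetilde\sigma}\times H$ with $\mathbb{T}$ acting through $\phi_{\widetilde\sigma}$, normalised by the framing as in the Kaveh--Manon construction. In this chart the candidate automorphism becomes left multiplication by the function
$$
    t\mapsto\phi_{\widetilde\sigma}(t)\,h\,\phi_{\widetilde\sigma}(t)^{-1}.
$$
Here $\phi_{\widetilde\sigma}$ is the homomorphism $\C^{*n}\to H$ whose restriction to the rays is given by the cocharacters $\lambda_\rho=\widetilde\Phi(e_\rho)$. These cocharacters lie in a common maximal torus $T_{\widetilde\sigma}$. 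Because $\widetilde\sigma$ is smooth, $U_{\widetilde\sigma}\cong\C^k\times(\C^*)^{n-k}$. The function extends over $U_{\widetilde\sigma}$ if and only if it extends in each coordinate direction separately, since it is a product over commuting cocharacters. This holds if and only if $\lim_{s\to0}\lambda_\rho(s)h\lambda_\rho(s)^{-1}$ exists for each ray $\rho$ of $\widetilde\sigma$, which means $h\in P_{\lambda_\rho}$. Intersecting over all cones gives the image $\bigcap_\rho P_\rho$. Conversely, every such $h$ produces an automorphism, and it is automatically compatible with the $H$-action and with the $\mathcal{T}$-equivariance data of \Cref{defn: equi morphism of principal bundles}.

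The main obstacle is the coordinatewise extension claim. Write $\phi_{\widetilde\sigma}(t)=\prod_\rho\lambda_\rho(t_\rho)$ with the $\lambda_\rho$ in $T_{\widetilde\sigma}$, and decompose $h$ with respect to the big cell of the root decomposition for $T_{\widetilde\sigma}$. Regularity on $\C^k$ is then equivalent to nonnegativity of the pairings $\langle\alpha,\lambda_\rho\rangle$ for every root $\alpha$ that appears in $h$, and this is exactly the condition $h\in\bigcap_\rho P_{\lambda_\rho}$. A second, smaller point is that the parabolic $P_\rho$ depends only on the equivalence class of $\lambda_\rho$. It is therefore independent of the cone used to compute it, as noted after the proposition on equivalent one-parameter subgroups.
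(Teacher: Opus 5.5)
Your proposal is correct and follows the paper's proof: pass to $\Aut_{(\C^*)^n}(\scrP_Z)$ via \Cref{thm-equivalence}, invoke the toric-variety result on $Z$ (the paper cites \cite[Theorem 4.2]{MR4767486}), and translate via $e_\rho\mapsto a_\rho v_\rho$. In your optional reproof, the big-cell step needs adjusting, since an element $h\in\bigcap_{\rho\in\widetilde\sigma(1)}P_{\lambda_\rho}$ need not lie in any big cell $U^-T_{\widetilde\sigma}U$ (for instance, a representative of a nontrivial Weyl group element centralizing all $\lambda_\rho$); for sufficiency, instead write $h$ as a product of elements of $T_{\widetilde\sigma}$ and of root groups $U_\alpha$ with $\langle\alpha,\lambda_\rho\rangle\ge 0$ for all $\rho\in\widetilde\sigma(1)$, which is possible because this intersection is connected and therefore generated by $T_{\widetilde\sigma}$ and such $U_\alpha$.
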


\begin{proof}
Following the notations of \Cref{sec: toric principal bundles on toric stacks}, by \Cref{thm-equivalence}, it follows that 
$$
    \Aut_{(\C^*)^n}(\scrP_Z) \cong \Aut_{\mathcal{T}}(\scrP)\,.
$$ 
If $\widetilde{\Phi} \colon |\widetilde{\Sigma}| \to \widetilde{\mathfrak{B}}(H)$ is the piecewise linear map corresponding to the framed bundle $(\scrP_Z,p_0)$, then \cite[Theorem 4.2]{MR4767486} implies that 
$$
    \Aut_{(\C^*)^n}(Z)
        \cong \bigcap_{\rho \in \widetilde{\Sigma}(1)} P_{\rho}
$$
where $P_{\rho}$ is the parabolic subgroup in $H$ corresponding to the lattice point $\widetilde{\Phi}(e_{\rho}) \in \widetilde{\mathfrak{B}}_{\Z}(H)$. 
Since the piecewise linear map $\Phi \colon |\Sigma| \to \B(H)$ corresponding to the framed bundle $\scrP$ is given by the composition $|\Sigma| \to |\widetilde{\Sigma}| \xrightarrow{\widetilde{\Phi}} \widetilde{\mathfrak{B}}(H)$, where $\Sigma \to \widetilde{\Sigma}$ is the bijection which maps $e_{\rho}$ to $a_{\rho}v_{\rho}$, the desired result follows.
\end{proof}

%--------------------------------------------------------
%--------------------------------------------------------
\section{Equivariant reduction of structure group}
%--------------------------------------------------------
%--------------------------------------------------------

%--------------------------------------------------------
%--------------------------------------------------------
%\subsection{Associated bundle}
%--------------------------------------------------------
%--------------------------------------------------------

Let $\mathscr{Q}$ be a principal $K$-bundle over an algebraic stack $\mathscr{X}$.
Suppose $F$ is a scheme with a (right) $K$-action.
We define the associated stack $\mathscr{Q} \times^K F$ as follows:
for each smooth atlas $X \to \mathscr{X}$, we have $(\mathscr{Q} \times^K F)(X) := \mathscr{Q}_X \times^K F$, where 
$$
    \mathscr{Q}_X \times^K F  = \frac{\mathscr{Q}_X \times F}{(q,f) \sim (q \cdot k ,\, k^{-1} \cdot f)}\, \quad \forall\, q \in \mathscr{Q}_X, \, k \in K\,, f \in F\,,
$$
and $\mathscr{Q}_X = \mathscr{Q} \times_{\mathscr{X}} X$.
This construction indeed defines a stack, see \cite[Section 1.4]{Biswas-Majumdar-Wong}.

The proof of the following proposition is similar to the arguments used to show that $\mathscr{P}$ is a principal $H$-bundle over $\mathscr{X}$ in \Cref{thm-equivalence}, and is therefore omitted.

\begin{proposition}
Let $K$ be a closed subgroup of a linear algebraic group $H$ over $\C$. 
Suppose $X \to \mathscr{X}$ is a smooth atlas of an algebraic stack $\mathscr{X}$ representable by schemes.
If $\mathscr{Q}$ be a principal $K$-bundle over $\mathscr{X}$, then $\mathscr{Q} \times^K H$ is a principal $H$-bundle over $\mathscr{X}$ in the sense of \Cref{defn: principal bundle}.
\end{proposition}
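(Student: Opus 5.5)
The plan is to check the three conditions of \Cref{defn: principal bundle} for $\mathscr{P} := \mathscr{Q} \times^K H$. Each one reduces, via the atlas $X \to \mathscr{X}$, to the corresponding statement for the ordinary principal $H$-bundle $\mathscr{Q}_X \times^K H$ over $X$, in the same way as the treatment of $[P/G^{-1}]$ in \Cref{thm-equivalence}.

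First I would identify the pullback. By \Cref{lemma: pullback of a principal bundle is a principal bundle}, $\mathscr{Q}_X = \mathscr{Q}\times_{\mathscr{X}} X$ is a principal $K$-bundle over the scheme $X$. By the classical associated bundle construction, which applies since $K$ is a closed subgroup of $H$, the scheme $P_X := \mathscr{Q}_X \times^K H$ is a principal $H$-bundle over $X$. Here $H$ acts by right multiplication on the second factor, and this action commutes with the diagonal relation $(q,h)\sim(qk,k^{-1}h)$.

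By construction of the associated stack as in \cite[Section 1.4]{Biswas-Majumdar-Wong}, the assignment $X \mapsto \mathscr{Q}_X\times^K H$ is compatible with base change along morphisms of atlases. This gives a $2$-cartesian square
\[
\begin{tikzcd}
P_X \arrow[r] \arrow[d] & \mathscr{P} \arrow[d] \\
X \arrow[r] & \mathscr{X}\,.
\end{tikzcd}
\]
Establishing this square is the step I expect to be the main obstacle. One has to check that the descent datum of $\mathscr{Q}_X$ relative to $X\times_{\mathscr{X}} X \rightrightarrows X$ induces one on $P_X$, and that the stack obtained by descent is exactly $\mathscr{P}$. I would handle it by noting that $\mathscr{Q}_X\times^K H$ is functorial in $\mathscr{Q}_X$ and commutes with base change of principal $K$-bundles over schemes. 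The two pullbacks of $P_X$ to $X\times_{\mathscr{X}}X$ are then canonically identified, with the cocycle condition inherited from that of $\mathscr{Q}$.

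Given the square, representability follows as in diagram \eqref{diag: cart-diag for rep-by-schemes}. For a scheme $T\to\mathscr{X}$, the fibre product $T\times_{\mathscr{X}}\mathscr{P}$ becomes a scheme after the smooth surjective base change $T\times_{\mathscr{X}}X\to T$. By \cite[Proposition 2.3.17]{alper2025stacks}, together with the fact that $P_X\to X$ is affine (as $H$ is affine), $T\times_{\mathscr{X}}\mathscr{P}$ is therefore itself a scheme.

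Flatness, local finite presentation and surjectivity descend from $P_X\to X$ by \cite[\href{https://stacks.math.columbia.edu/tag/04XD}{Tag 04XD}]{stacks-project}.

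For the $H$-action, I would define it on each $P_X$ by right multiplication. It is strict, and the projection is strictly invariant, because these properties hold on every chart and the actions are compatible with the descent isomorphisms. This gives condition (2).

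Finally, for condition (3), the map $(\pr_1,\rho)\colon \mathscr{P}\times_S H\to\mathscr{P}\times_{\mathscr{X}}\mathscr{P}$ pulls back along the smooth surjection $P_X\times_X P_X\to\mathscr{P}\times_{\mathscr{X}}\mathscr{P}$ to the isomorphism $P_X\times_S H\cong P_X\times_X P_X$. It is therefore an equivalence by \cite[\href{https://stacks.math.columbia.edu/tag/04XD}{Tag 04XD}]{stacks-project}, exactly as in diagram \eqref{diag: pull-diag principal bundle}.
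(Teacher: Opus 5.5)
Your proposal is correct and follows the route the paper indicates. The paper omits this proof, saying it mirrors the argument for $[P/G^{-1}]$ in \Cref{thm-equivalence}: identify the pullback to the atlas as the classical associated bundle $\mathscr{Q}_X\times^K H$, obtain the $2$-cartesian square, and then descend representability, flatness, local finite presentation, surjectivity and the equivalence $(\pr_1,\rho)$ along smooth surjections. Your remark that $P_X\to X$ is affine (because $H$ is affine) is a worthwhile addition, since that is what makes the descent criterion for schemes applicable in the representability step.
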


\begin{definition}[Equivariant reduction of structure group]
Let $K$ be a closed subgroup of a linear algebraic group $H$ over $\C$. 
We say that a $\mathcal{T}$-equivariant principal $H$-bundle $\mathscr{P}$ over $\mathscr{X}(\boldsymbol{\Sigma})$ admits an \emph{equivariant reduction of structure group to $K$} if there exists a $\mathcal{T}$-equivariant principal $K$-bundle $\mathscr{Q}$ over $\mathscr{X}(\boldsymbol{\Sigma})$ such that $\mathscr{Q} \times^K H$ is isomorphic to $\mathscr{P}$ as $\mathcal{T}$-equivariant principal $H$-bundles.

If $\mathscr{P}$ admits an equivariant reduction of the structure group to a maximal torus of $H$, then we say that $\mathscr{P}$ splits equivariantly.
\end{definition}

\begin{lemma}
\label{lemma: reduction of structure group}
Let $K$ be a closed subgroup of a linear algebraic group $H$ over $\C$. 
A $\mathcal{T}$-equivariant principal $H$-bundle $\mathscr{P}$ over $\mathscr{X}(\boldsymbol{\Sigma})$ has an equivariant reduction of structure group to $K$ if and only if $(\C^*)^n$-equivariant principal $H$-bundle $\mathscr{P}_Z := \mathscr{P} \times_{\mathscr{X}(\boldsymbol{\Sigma})} Z$ over $Z$ has an equivariant reduction of structure group to $K$.
In particular, $\mathscr{P}$ splits equivariantly if and only if $\mathscr{P}_Z$ splits equivariantly.
\end{lemma}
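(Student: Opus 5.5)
The plan is to reduce everything to \Cref{thm-equivalence}, applied twice (once with structure group $K$, once with $H$), together with the observation that forming the associated bundle commutes with pullback along the atlas $Z \to \mathscr{X}(\boldsymbol{\Sigma})$.

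\emph{Compatibility of pullback with extension of structure group.} I will first show that for a $\mathcal{T}$-equivariant principal $K$-bundle $\mathscr{Q}$ there is a natural $(\C^*)^n$-equivariant isomorphism
$$
(\mathscr{Q} \times^K H)\times_{\mathscr{X}(\boldsymbol{\Sigma})} Z \cong \mathscr{Q}_Z \times^K H .
$$
This is essentially built into the definition of the associated stack, since its value on the atlas $Z$ is $\mathscr{Q}_Z \times^K H$. The $(\C^*)^n$-action on the right is induced from that on $\mathscr{Q}_Z$. It matches the action obtained in the proof of \Cref{thm-equivalence} by pulling back the $\mathcal{T}$-action along $(\C^*)^n \to \mathcal{T}$, because $\mathcal{T}$ acts on $\mathscr{Q}\times^K H$ through the first factor and commutes with $K$.

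\emph{Forward direction.} Suppose $\mathscr{Q} \times^K H \cong \mathscr{P}$ as $\mathcal{T}$-equivariant principal $H$-bundles. Pull this isomorphism back along $Z \to \mathscr{X}(\boldsymbol{\Sigma})$. The equivalence of \Cref{thm-equivalence} is exactly pullback, so this gives an equivariant isomorphism $\mathscr{Q}_Z \times^K H \cong \mathscr{P}_Z$. Since $\mathscr{Q}_Z$ is a $(\C^*)^n$-equivariant principal $K$-bundle (by \Cref{lemma: pullback of a principal bundle is a principal bundle} and \Cref{thm-equivalence}), this is an equivariant reduction of $\mathscr{P}_Z$ to $K$.

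\emph{Converse.} Suppose $Q$ is a $(\C^*)^n$-equivariant principal $K$-bundle on $Z$ with $Q \times^K H \cong \mathscr{P}_Z$. Applying \Cref{thm-equivalence} with structure group $K$ gives $\mathscr{Q} := [Q/G]$, a $\mathcal{T}$-equivariant principal $K$-bundle with $\mathscr{Q}_Z \cong Q$. By the compatibility step, $(\mathscr{Q}\times^K H)_Z \cong Q\times^K H \cong \mathscr{P}_Z$ equivariantly. Since the functor of \Cref{thm-equivalence} for $H$ is an equivalence of categories, hence fully faithful, this isomorphism lifts to a $\mathcal{T}$-equivariant isomorphism $\mathscr{Q}\times^K H \cong \mathscr{P}$. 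The splitting statement is the special case where $K$ is a maximal torus of $H$.

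\emph{Main obstacle.} I expect the difficulty to lie in the compatibility step, specifically in checking that the $\mathcal{T}$-action on $\mathscr{Q}\times^K H$ is well defined and that the identification above respects the $2$-arrows. I would handle this at the prestack level: describe $\mathscr{Q}\times^K H$ as $[(Q\times^K H)/G]$, note that $(Q \times^K H)$ carries a commuting $(\C^*)^n$- and $H$-action, and observe that $G$ acts through $(\C^*)^n$. Then the construction in the proof of \Cref{thm-equivalence} applies verbatim to $Q\times^K H$, and the comparison with $\mathscr{Q}\times^K H$ reduces to the equality of the two quotient stacks $[(Q\times^K H)/G]$ and $[Q/G]\times^K H$.
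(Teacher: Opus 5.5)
Your proposal is correct and follows the paper's proof essentially step for step. For the forward direction, you pull back $\mathscr{Q}\times^K H\cong\mathscr{P}$ along $Z\to\mathscr{X}(\boldsymbol{\Sigma})$. For the converse, you set $\mathscr{Q}=[Q/G]$, observe that $Q\times^K H\to\mathscr{Q}\times^K H$ is $2$-cartesian over $Z\to\mathscr{X}(\boldsymbol{\Sigma})$, and apply \Cref{thm-equivalence} to get $\mathscr{Q}\times^K H\cong\mathscr{P}$. The pullback compatibility of $\times^K H$ (including the $\mathcal{T}$-action) and the full faithfulness that you spell out are exactly what the paper uses implicitly at those two steps.
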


\begin{proof}
Suppose $\mathcal{T}$-equivariant principal $H$-bundle $\mathscr{P}$ over $\mathscr{X}(\boldsymbol{\Sigma})$ has an equivariant reduction of structure group to $K$.
Then there exists a $\mathcal{T}$-equivariant principal $K$-bundle $\mathscr{Q}$ over $\mathscr{X}(\boldsymbol{\Sigma})$ and $\mathcal{T}$-equivariant isomorphism $\mathscr{Q} \times^K H \to \mathscr{P}$ of principal $H$-bundles.
Pulling back this isomorphism along $Z \to \mathscr{X}(\boldsymbol{\Sigma})$, by \Cref{thm-equivalence}, we get $(\C^*)^n$-equivariant isomorphism $\mathscr{Q}_Z \times^K H \to \mathscr{P}_Z$ of principal $H$-bundles over $Z$. 
Hence, $\mathscr{P}_Z$ admits an equivariant reduction of structure group to $K$.

Conversely, suppose $(\C^*)^n$-equivariant principal $H$-bundle $\mathscr{P}_Z$ has an equivariant reduction of structure group to $K$. 
Then there exists a $(\C^*)^n$-equivariant principal $K$-bundle $Q$ over $Z$ and $(\C^*)^n$-equivariant isomorphism $Q \times^K H \to \mathscr{P}_Z$ of principal $H$-bundles.
Then, by \Cref{thm-equivalence}, $\mathscr{Q} := [Q/G]$ is a $\mathcal{T}$-equivariant principal $K$-bundle over $\mathscr{X}(\boldsymbol{\Sigma})$ such that the following diagram is 2-cartesian
\[
% https://tikzcd.yichuanshen.de/#N4Igdg9gJgpgziAXAbVABwnAlgFyxMJZABgBpiBdUkANwEMAbAVxiRAEUQBfU9TXfIRQBGclVqMWbADrSAtnRwALOAGMATsHZduvEBmx4CRMsPH1mrRCABauvocFFRZ6hanXZC5Ws0ANLgAKWQAjCAYoOABPOTCGYFkAZSwAcwUuAEpZUgA6bnEYKBT4IlAAM3UIOSQAJmocCCQAZh5yyurEURAG5taQCqqkMm7Gzr6BjuGexBquCi4gA
\begin{tikzcd}
Q \arrow[r] \arrow[d] & \mathscr{Q} \arrow[d]                 \\
Z \arrow[r]           & {\mathscr{X}(\boldsymbol{\Sigma})\,.}
\end{tikzcd}
\]
Hence, the following diagram is $2$-cartesian
\[
% https://tikzcd.yichuanshen.de/#N4Igdg9gJgpgziAXAbVABwnAlgFyxMJZABgBpiBdUkANwEMAbAVxiRAEUACAHW7wFt4APQDSnABIgAvqXSZc+QigCM5KrUYs2vfnRwALOAGMATsHZSefLILiiJ02SAzY8BImWXr6zVohAAWo5yropEql7UPlr+OnqGpsAAGlIAFLwARhAMUHAAnvxZDMC8AMpYAOa6UgCUvKQAdNLqMFAV8ESgAGYmEPxIAEzUOBBIAMwy3b39iKogI+OTID19SGTzo7NLKzPrC4gDUhRSQA
\begin{tikzcd}
Q \times^K H \arrow[r] \arrow[d] & \mathscr{Q} \times^K H \arrow[d]      \\
Z \arrow[r]                      & {\mathscr{X}(\boldsymbol{\Sigma})\,.}
\end{tikzcd}
\]
Hence, by \Cref{thm-equivalence}, it follows that $\mathscr{Q} \times^K H$ is isomorphic to $\mathscr{P}$ as $\mathcal{T}$-equivariant principal $H$-bundles.
\end{proof}

% As a consequence of the above result and \cite[Theorem 5.4]{MR4767486}, we obtain the following theorem.

\begin{theorem}[Criterion for equivariant reduction of structure group]
Let $H$ be a connected reductive algebraic group over $\C$, and let $K$ be a closed connected reductive subgroup of $H$. 
A $\mathcal{T}$-equivariant principal $H$-bundle $\mathscr{P}$ over $\mathscr{X}(\boldsymbol{\Sigma})$ has an equivariant reduction of structure group to $K$ if and only if there exists a framing $(\mathscr{P}_Z,p_0)$ of $\mathscr{P}$ such that the image of the corresponding piecewise linear map $\Phi \colon |\Sigma| \to \B(H)$, satisfying $\Phi(a_{\rho}v_{\rho})$ is a lattice point of the building, lies in $\B(K)$.
\end{theorem}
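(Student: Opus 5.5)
The plan is to reduce to the toric variety $Z$ and apply the known criterion there (\cite[Theorem 5.4]{MR4767486}): a framed toric principal $H$-bundle over a toric variety admits an equivariant reduction to $K$ if and only if, for some framing, the image of its piecewise linear map lies in $\B(K)\subset\B(H)$. The argument has three steps, mirroring the proof of the automorphism-group theorem.

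\textbf{Step 1: pass from the stack to $Z$.} By \Cref{lemma: reduction of structure group}, $\mathscr{P}$ has an equivariant reduction to $K$ if and only if the $(\C^*)^n$-equivariant bundle $\mathscr{P}_Z$ over $Z$ has one. $Z$ is the smooth toric variety with fan $\widetilde{\Sigma}$. So by the cited criterion this happens if and only if there is a framing $p_0\in(\mathscr{P}_Z)_{z_0}$ whose piecewise linear map $\widetilde{\Phi}\colon|\widetilde{\Sigma}|\to\B(H)$ has image in $\B(K)$.

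\textbf{Step 2: transport the condition to $|\Sigma|$.} By \Cref{defn: framed principal bundle}, framings of $\mathscr{P}$ are exactly framings of $\mathscr{P}_Z$. In the proof of \Cref{thm: equivalence with piecewise linear maps}, the map $\Phi$ attached to $(\mathscr{P}_Z,p_0)$ is $\widetilde{\Phi}$ composed with the cone-wise linear bijection $|\Sigma|\to|\widetilde{\Sigma}|$. On each $\sigma$ this bijection inverts $\widetilde{\sigma}\to\sigma$, $e_\rho\mapsto a_\rho v_\rho$, and it is a homeomorphism because $\Sigma$ is simplicial. Hence $\Phi(|\Sigma|)=\widetilde{\Phi}(|\widetilde{\Sigma}|)$, so the image of $\Phi$ lies in $\B(K)$ if and only if the image of $\widetilde{\Phi}$ does. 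The lattice condition on $\Phi(a_\rho v_\rho)$ holds automatically, since $\widetilde{\Phi}(e_\rho)$ is a lattice point.

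\textbf{Step 3: check the hypotheses and the meaning of "lies in $\B(K)$".} The subtle point is that the cited criterion needs $K$ closed, connected and reductive, so that $\B(K)$ is defined and the induced map $\B(K)\to\B(H)$ is injective. Injectivity was shown earlier for closed subgroups, so $\B(K)$ can be regarded as a subset of $\B(H)$.

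I also need to check that the toric-variety result is stated in the same sense as here, namely that the image lies in $\widehat{j}(\B(K))$ with each cone landing in an apartment of some maximal torus of $K$. If it is instead phrased via $\Phi=\widehat{j}\circ\Phi_K$ for a piecewise linear $\Phi_K$ into $\B(K)$, I would use the functoriality part of \Cref{thm: main-theorem kaveh-manon}. A morphism $\mathscr{Q}_Z\to\mathscr{P}_Z$ equivariant for $j\colon K\hookrightarrow H$ exists if and only if $\Phi=\widehat{j}\circ\Phi_K$. That equivalence also gives the other direction directly: given $\Phi$ with image in $\B(K)$, lift it cone by cone to $\Phi_K$ (integrality is preserved because $K$ is closed), build the bundle $\mathscr{Q}_Z$, and get $\mathscr{Q}_Z\times^K H\cong\mathscr{P}_Z$.
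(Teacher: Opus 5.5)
Your proposal is correct and follows the paper's proof. Like the paper, you reduce to $Z$ via \Cref{lemma: reduction of structure group}, apply \cite[Theorem 5.4]{MR4767486} there, and transport the image condition along the bijection $|\Sigma|\cong|\widetilde{\Sigma}|$; your Step 2 just spells out why $\Phi(|\Sigma|)=\widetilde{\Phi}(|\widetilde{\Sigma}|)$. The fallback in Step 3 is not needed, and as written its cone-by-cone lifting to $\Phi_K$ would itself require showing that each $\Phi(\sigma)$ lies in a single apartment of a maximal torus of $K$, which is exactly what the cited theorem supplies.
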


% \begin{theorem}[Criterion for equivariant reduction of structure group]
% Let $K$ be a closed subgroup of a linear algebraic group $H$ over $\C$. 
% Let $\mathcal{T}$-equivariant principal $H$-bundle $\mathscr{P}$ over $\mathscr{X}(\boldsymbol{\Sigma})$.
% Then the following are equivalent:
% \begin{enumerate}
% \item $\mathscr{P}$ admits a reduction of structure group to $K$.

% \item $\mathscr{P}_Z$ admits a reduction of structure group to $K$.

% \item There exists a $p_0 \in (\mathscr{P}_Z)_{z_0}$ such that the image of $\Phi_{p_0}$ lies in $\widetilde{\mathfrak {B}}(K)$, where $\Phi_{p_0} \colon |\widetilde{\Sigma}| \to \widetilde{\mathfrak {B}}(H)$ is the integral piecewise linear map corresponding to the framed bundle $(\mathscr{P}_Z,p_0)$ and $\widetilde{\Sigma}$ is the fan of $Z$.

% \item There exists a $[p_0] \in (\mathscr{P})_{[z_0]}$ such that the image of $\Phi_{[p_0]}$ lies in $\widetilde{\mathfrak {B}}(K)$, where $\Phi_{[p_0]} \colon |\Sigma| \to \widetilde{\mathfrak {B}}(H)$ is the piecewise linear map, satisfying $\Phi_{[p_0]}(a_{\rho}v_{\rho})$ is a lattice point of the building, corresponding to the framed bundle $(\mathscr{P},[p_0])$.
% \end{enumerate}
% \end{theorem}

\begin{proof}
% By \Cref{lemma: reduction of structure group}, it suffices to show that $\mathscr{P}_Z$ has an equivariant reduction of structure group to $K$ if and only if there exists a $p_0 \in (\mathscr{P}_Z)_{z_0}$ such that the image of the corresponding piecewise linear map $\Phi \colon |\Sigma| \to \B(H)$ lies in $\B(K)$.
By \cite[Theorem 5.4]{MR4767486}, $\mathscr{P}_Z$ has an equivariant reduction of structure group to $K$ if and only if there exists a framing $(\mathscr{P}_Z, p_0)$ of $\mathscr{P}_Z$ such that the image of the corresponding integral piecewise linear map $\widetilde{\Phi} \colon |\widetilde{\Sigma}| \to \widetilde{\mathfrak {B}}(H)$ lies in $\widetilde{\mathfrak {B}}(K)$, where $\widetilde{\Sigma}$ is the fan of $Z$.
% Following the proof of \Cref{thm-equivalence}, we have $\widetilde{\Sigma} \to \Sigma$ is a bijection; hence we obtain a piecewise linear map with the required properties.
Since the piecewise linear map $\Phi \colon |\Sigma| \to \B(H)$ corresponding to the framing $(\mathscr{P}_Z, p_0)$ of $\scrP$ is given by the composition $|\Sigma| \to |\widetilde{\Sigma}| \xrightarrow{\widetilde{\Phi}} \widetilde{\mathfrak{B}}(H)$, the desired result follows follows from \Cref{lemma: reduction of structure group}.
\end{proof}

The following corollary follows from the fact that the cone over the tits building $\widetilde{\mathfrak{B}}(T)$ of a maximal torus $T$ of $H$ is the extended apartment $\widetilde{A}_T$.

\begin{corollary}[Criterion for equivariant splitting]
\label{cor: equivariant splitting}
Let $H$ be a connected reductive algebraic group over $\C$.
A $\mathcal{T}$-invariant principal $H$-bundle $\mathscr{P}$ over $\mathscr{X}(\boldsymbol{\Sigma})$ splits equivariantly if and only if for some (and hence any) framing $(\mathscr{P}_Z,p_0)$ of $\mathscr{P}$ such that the image of the corresponding piecewise linear map $\Phi \colon |\Sigma| \to \B(H)$, satisfying $\Phi(a_{\rho}v_{\rho})$ is a lattice point of the building, lies in an extended apartment $\widetilde{A}_T$ for some maximal torus $T$ of $H$.
\end{corollary}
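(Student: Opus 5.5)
I will deduce this corollary from the preceding criterion for equivariant reduction of structure group, taking $K = T$ to be a maximal torus of $H$. Since $T$ is a closed connected reductive subgroup of $H$, that theorem says $\mathscr{P}$ admits an equivariant reduction to $T$ if and only if some framing $(\mathscr{P}_Z,p_0)$ gives a piecewise linear map $\Phi$ with image in $\B(T)$. I will regard $\B(T)$ as a subset of $\B(H)$ via the injective map $\B(T) \to \B(H)$ induced by the inclusion.

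The first step is to identify $\B(T)$ with the extended apartment $\widetilde{A}_T = \Lambda^\vee_{\R}(T)$. The torus $T$ has a unique maximal torus, namely itself, and its root system is empty. So there is a single Weyl chamber and no gluing, and $\B(T) = \Lambda^\vee_{\R}(T)$. The induced map to $\B(H)$ on $\Lambda^\vee(T)$ is $\lambda \mapsto [\lambda]$, and on $\R$-points it is the tautological inclusion of the apartment $\Lambda^\vee_{\R}(T) \subset \B(H)$. Hence its image is exactly $\widetilde{A}_T$.

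Next, a bundle splits equivariantly precisely when it reduces to some maximal torus. Since all maximal tori are conjugate, it does not matter which one is taken: reduction to $T$ and reduction to $hTh^{-1}$ are equivalent, and the corresponding apartments are moved into each other by $\widehat{\alpha_h}$. This gives the equivalence ``splits $\iff$ for some framing the image of $\Phi$ lies in some $\widetilde{A}_T$.''

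The main point needing care is the phrase ``(and hence any)'' framing. Any two framings differ by $p_0 \mapsto p_0\cdot h_0$. By \Cref{rem: change of framing} and the functoriality part of \Cref{thm: main-theorem kaveh-manon}, applied on $Z$ and transported along the bijection $\widetilde{\Sigma}\to\Sigma$, the new map is $\widehat{\alpha_{h_0}}\circ\Phi$, where $\alpha_{h_0}$ is conjugation by $h_0^{-1}$. Conjugation sends the apartment $\widetilde{A}_T$ isomorphically onto $\widetilde{A}_{h_0^{-1}Th_0}$, which is again an extended apartment of a maximal torus. So if the image of $\Phi$ lies in some apartment for one framing, it does for every framing.
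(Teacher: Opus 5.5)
Your proposal is correct and follows the paper's route: the paper deduces the corollary directly from the reduction criterion with $K=T$, using the identification $\B(T)=\widetilde{A}_T$. Your change-of-framing argument via \Cref{rem: change of framing} and $\widehat{\alpha_{h_0}}$ is a useful addition, since it spells out the ``(and hence any)'' clause that the paper leaves implicit.
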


% \begin{corollary}[Criterion for equivariant splitting]
% Let $K$ be a closed subgroup of a linear algebraic group $H$ over $\C$. 
% Let $\mathcal{T}$-equivariant principal $H$-bundle $\mathscr{P}$ over $\mathscr{X}(\boldsymbol{\Sigma})$.
% Then the following are equivalent:
% \begin{enumerate}
% \item $\mathscr{P}$ splits equivariantly.

% \item $\mathscr{P}_Z$ splits equivariantly.

% \item For some (and hence any) $p_0 \in (\mathscr{P}_Z)_{z_0}$ the image of $\Phi_{p_0}$ lies in an extended apartment $\widetilde{A}_K$ for some maximal torus $K$ in $H$, where $\Phi_{p_0} \colon |\Sigma| \to \widetilde{\mathfrak {B}}(H)$ is the integral piecewise linear map corresponding to the framed bundle $(\mathscr{P}_Z,p_0)$ and $\widetilde{\Sigma}$ is the fan of $Z$.

% \item For some (and hence any) $[p_0] \in (\mathscr{P})_{[z_0]}$ the image of $\Phi_{[p_0]}$ lies in an extended apartment $\widetilde{A}_K$ for some maximal torus $K$ in $H$, where $\Phi_{[p_0]} \colon |\Sigma| \to \widetilde{\mathfrak {B}}(H)$ is the piecewise linear map, satisfying $\Phi_{[p_0]}(a_{\rho}v_{\rho})$ is a lattice point of the building, corresponding to the framed bundle $(\mathscr{P},[p_0])$.
% \end{enumerate}
% \end{corollary}

\begin{corollary}
\label{cor: if associated bundle splits then bundle splits}
Let $H$ be a connected reductive algebraic group over $\C$, and let $K$ be a closed connected reductive subgroup of $H$. 
Let $\mathscr{Q}$ be a $\mathcal{T}$-equivariant principal $K$-bundle $\mathscr{P}$ over $\mathscr{X}(\boldsymbol{\Sigma})$. 
If $\mathscr{Q} \times^K H$ splits equivariantly as a $H$-bundle, then $\mathscr{Q}$ splits equivariantly as a $K$-bundle.
\end{corollary}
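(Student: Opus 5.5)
Since $\mathscr{Q} \times^K H$ splits equivariantly, \Cref{cor: equivariant splitting} applied to $H$ shows that for any framing the piecewise linear map of $\mathscr{P} := \mathscr{Q} \times^K H$ has image in an extended apartment $\widetilde{A}_{T}$ of $H$. We want the same for the map of $\mathscr{Q}$ into $\B(K)$, and then we apply \Cref{cor: equivariant splitting} again, this time to $K$.

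\emph{Step 1: relate the two maps.} Fix a framing $(\mathscr{Q}_Z, q_0)$ of $\mathscr{Q}$. Then $[q_0, e]$ is a framing of $\mathscr{P}_Z = \mathscr{Q}_Z \times^K H$, and the natural map $\mathscr{Q}_Z \to \mathscr{P}_Z$ is a morphism of framed toric bundles with respect to the inclusion $j \colon K \hookrightarrow H$. The second part of \Cref{thm: main-theorem kaveh-manon}, applied on $Z$, gives $\widetilde{\Phi}_{\mathscr{P}} = \widehat{j} \circ \widetilde{\Phi}_{\mathscr{Q}}$ on $|\widetilde{\Sigma}|$. Composing with the bijection $|\Sigma| \to |\widetilde{\Sigma}|$ from the proof of \Cref{thm: equivalence with piecewise linear maps} yields $\Phi_{\mathscr{P}} = \widehat{j} \circ \Phi_{\mathscr{Q}}$.

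\emph{Step 2: locate a maximal torus of $K$.} The image of $\Phi_{\mathscr{Q}}$ is a finite union of cones $\Phi_{\mathscr{Q}}(\sigma)$, each lying in some $\Lambda^\vee_\R(T^K_\sigma)$, and $\widehat{j}$ is injective. We need a single maximal torus $T_K$ of $K$ with $\mathrm{im}\,\Phi_{\mathscr{Q}} \subset \widetilde{A}_{T_K}$. The approach is as follows.
\begin{enumerate}
\item The lattice points $\lambda_\rho := \widetilde{\Phi}_{\mathscr{Q}}(e_\rho)$, viewed as classes of one-parameter subgroups of $K$, have $H$-images that all lie in $\Lambda^\vee(T)$.
\item Let $T$ be a maximal torus of $H$ with $\mathrm{im}\,\Phi_{\mathscr{P}} \subset \widetilde{A}_T$. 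By the description of $\Lambda^\vee_P(T)$ and the Proposition on equivalent one-parameter subgroups, each $\lambda_\rho$ can be represented by a cocharacter of $K$ that, after $P_{j\lambda_\rho}$-conjugation, lands in $T$. The subtlety is whether the conjugating elements can be taken in $K$.
\item Since the $\lambda_\rho$ generate the image on $\widetilde{\Sigma}$, a single torus for all the $\lambda_\rho$ suffices, so the goal is a torus of $K$ containing commuting representatives of every $\lambda_\rho$.
\end{enumerate}

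The cleanest route I would pursue is to reduce, via \Cref{lemma: reduction of structure group}, to the toric variety $Z$ and then invoke the corresponding statement there, namely the analogue of this corollary in \cite{MR4767486} (proved via \cite[Theorem 5.4]{MR4767486}). If that result is not available, I would argue directly as follows. A splitting of $\mathscr{P}_Z$ is given by an equivariant reduction to $T$. The $\mathscr{Q}_Z$-reduction together with the $T$-reduction gives, on the open orbit, equivariant sections that differ by an element of $H$. Using that the $\mathbb{T}$-action on the framed fibre of $\mathscr{Q}_Z$ is through cocharacters of $K$ that commute, one obtains a torus of $K$, namely a maximal torus of $K$ containing the image of the homomorphism $(\C^*)^n \to K$ defined on the generic fibre.

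\emph{Main obstacle.} The hard part is Step 2: showing the image of $\Phi_{\mathscr{Q}}$ lies in one apartment of $K$, rather than merely that its $\widehat{j}$-image lies in one apartment of $H$. This needs to use that $K$ is reductive, so that Borel and parabolic intersections of $K$ contain maximal tori of $K$. I expect to settle it by citing the variety-level result on $Z$.
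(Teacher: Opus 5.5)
Your intended route is exactly the paper's proof: pass to $Z$ via \Cref{lemma: reduction of structure group}, using $(\mathscr{Q}\times^K H)_Z \cong \mathscr{Q}_Z \times^K H$, then apply the variety-level statement \cite[Corollary 5.6]{MR4767486} to get that $\mathscr{Q}_Z$ splits, and return to $\mathscr{X}(\boldsymbol{\Sigma})$ by the same lemma. Steps 1--2 are then unnecessary, and you should drop the ``direct'' fallback: $(\C^*)^n$ acts freely on the open orbit, so there is no single homomorphism $(\C^*)^n \to K$ on the generic fibre, and the chart homomorphisms $\phi_\sigma$ genuinely vary with $\sigma$ (which is exactly why landing in one apartment of $K$ is the nontrivial content of the cited result).
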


\begin{proof}
Suppose $\mathscr{Q} \times^K H$ splits equivariantly as a $H$-bundle.
Equivalently, by \Cref{lemma: reduction of structure group}, $\mathscr{Q}_Z \times^K H$ splits equivariantly as a $H$-bundle.
Then, by \cite[Corollary 5.6]{MR4767486}, it follows that $\mathscr{Q}_Z$ splits equivariantly as a $K$-bundle.
Hence, again by \Cref{lemma: reduction of structure group}, we obtain that $\mathscr{Q}$ splits equivariantly as a $K$-bundle.
\end{proof}

% NOTE FOR REDUCTIVE GROUPS: THE LEVI DECOMPOSITION IS TRIVIAL SINCE R_u(G) = \{e\}. 

% \begin{corollary}[Equivariant reduction of structure group to a Levi]
% Suppose $G$ is an algebraic group over $\C$ admitting a semidirect product decomposition $G = P \ltimes U$, where $U$ is unipotent.
% Let $\scrP$ be an $\mathcal{T}$-equivariant principal $G$-bundle over $\scrX(\boldsymbol{\Sigma})$, then $\scrP$ admits an equivariant reduction of structure group to $L$.
% In particular, $\scrP$ admits an equivariant reduction of structure group to a Levi subgroup.
% \end{corollary}

% \begin{proof}
% By \cite[Corollary 5.8]{MR4767486}, the pullback bundle $\scrP_Z$ admits equivariant reduction of structure group to $L$. 
% Hence, by \Cref{lemma: reduction of structure group}, $\scrP$ admits equivariant reduction of structure group to $L$.
% \end{proof}

\begin{example}
\label{ex: low rank bundle splits over weighted projective stack}
Let $w_0,w_1,\dots,w_n \in \Z_{>0}$. 
Consider the weighted projective stack
$$
    \mathbb{P}(w) 
        := \left[\left(\mathbb{C}^{n+1}\setminus \{(0,\dots,0)\}\right) / \mathbb{C}^*\right]\,
$$
where the action of $\C^*$ is given by
$$
    t \cdot (z_0,z_1,\dots,z_n) = (t^{w_0}z_0,t^{w_1}z_1,\dots,t^{w_n}z_n)
$$
for any $t \in \C^*$ and $(z_0,z_1,\dots,z_n) \in \mathbb{C}^{n+1}\setminus \{(0,\dots,0)\}$.
The stack $\mathbb{P}(w)$ is a complete toric Deligne--Mumford stack with Deligne--Mumford torus $\mathcal{T} = \left[(\C^*)^{n+1}/\C^*\right] \simeq \C^n \times B\mu_d$, where $\mu_d$ is the finite cyclic group of order $d = \gcd(w_0,w_1,\dots,w_n)$.

Let $\mathscr{P}$ be a $\mathcal{T}$-equivariant principal $\GL(r)$-bundle over $\mathbb{P}(w)$, where $r < n$.
Let $\mathscr{P}_Z$ be the corresponding $(\C^*)^{n+1}$-equivariant principal $\GL(r)$-bundle over $Z = \mathbb{C}^{n+1}\setminus \{(0,\dots,0)\}$.
Since $r <n$, it follows from \cite[Corollary 6.1.4]{Klyachko} that $\mathscr{P}_Z$ splits equivariantly.
Hence, by \Cref{lemma: reduction of structure group}, we obtain that $\mathscr{P}$ splits equivariantly.
Hence, any toric vector bundle over the weighted projective stack $\mathbb{P}(w)$ splits equivariantly if $r < n$. 
Moreover, if $K$ is a closed connected reductive subgroup of $\GL(r)$, then by \Cref{cor: if associated bundle splits then bundle splits} it follows that any toric principal $K$-bundle over the weighted projective stack $\mathbb{P}(w)$ splits equivariantly if $r < n$. 
\end{example}

\begin{example}[Equivariant bundles on the stacky projective line $\mathbb{P}(a,b)$]
\label{ex: equivariant bundle on stacky projective line}
% Let $a,b \in \Z_{> 0}$ and let $d = \gcd(a,b)$.
% The \emph{stacky projective line} is defined as the quotient stack
% $$
%     \mathbb{P}(a,b) 
%         := \left[\left(\mathbb{C}^{2}\setminus \{(0,0)\}\right)/\mathbb{C}^*\right]\,
% $$
% where the action of $\C^*$ is given by
% $$
%     t \cdot (z_0,z_1) = (t^{a}z_0,t^bz_1)
% $$
% for any $t \in \C^*$ and $(z_0,z_1) \in \C^{2}\setminus \{(0,0)\}$.
% The stack $\mathbb{P}(a,b)$ is a complete toric Deligne--Mumford stack with Deligne--Mumford torus $\mathcal{T} = \left[(\C^*)^2/\C^*\right] \simeq \C \times B\mu_d$, where $\mu_d$ is the finite cyclic group of order $d$.

% \noindent
% The projective stack $\mathbb{P}(a,b)$ is described by the following stacky fan $(N,\Sigma,\beta)$:
% \begin{itemize}
% \item The lattice $N$ is $\mathbb{Z}\oplus \mathbb{Z}/d\mathbb{Z}$.

% \item The fan $\Sigma$ is the complete fan in $N_{\mathbb Q}\cong \mathbb Q$ with two rays $\rho_1 = \langle 1 \rangle = \mathbb Q_{\ge 0}$ and $\rho_2 = \langle -1 \rangle = \mathbb Q_{\le 0}$, together with the zero cone.

% \item The map $\beta \colon \mathbb{Z}^2 \to N$ is given by $\beta(e_1)=(b',1)$ and $\beta(e_2)=(-a',1)$, where $a' = a/d$ and $b' = b/d$. 
% Note we can take $\beta(e_2) = (-a',0)$ as well.
% \end{itemize}

% \noindent
Let $H$ be a connected reductive linear algebraic group over $\C$, and let $\mathscr{P}$ be a $\mathcal{T}$-equivariant principal $H$-bundle over the stacky projective line $\mathbb{P}(a,b)$.
For any framing $(\mathscr{P}_Z,p_0)$ of $\mathscr{P}$, the corresponding piecewise linear map $\Phi \colon |\Sigma| \to \B(H)$, satisfying $\Phi(a_{\rho}v_{\rho})$ is a lattice point of the building, gives us two cones $\Phi(\rho_1)$ and $\Phi(\rho_2)$.
These cones (corresponding to the parabolic subgroups of $H$) are contained inside the Weyl chambers (corresponding to the Borel subgroups of $H$).
Since the intersection of two Borel subgroups in a reductive group contains a maximal tori (see \cite[Page 173 Corollary]{HumphreysLAG}), it follows that there exists a maximal torus $T$ of $H$ such that the parabolic subgroups corresponding to $\Phi(\rho_1)$ and $\Phi(\rho_2)$ contain $T$.
Hence, $\Phi(| \Sigma |) \subset \widetilde{A}_T$, and hence $\mathscr{P}$ splits equivariantly.
% \noindent
% Let $Z = \C^2 \setminus \{0\}$, and choose $p_0 \in (\mathscr{P}_Z)_{z_0}$.
% The fan $\widetilde{\Sigma}$ of the toric variety $Z$ consists of two rays, $\rho_1 = \langle 1 \rangle$ and $\rho_2 = \langle -1 \rangle$, together with the zero cone in two-dimensional space.
\end{example}

\begin{theorem}
Let $H$ be a linear algebraic group over $\C$.
If $H$ is a nilpotent group, then any $\mathcal{T}$-equivariant principal $H$-bundle over $\scrX(\boldsymbol{\Sigma})$ splits equivariantly. 
In particular, if $H$ is unipotent, then the bundle is trivial, with $\mathcal{T}$ acting trivially on $H$.
\end{theorem}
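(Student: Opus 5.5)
By \Cref{lemma: reduction of structure group}, it suffices to prove the statement for the $(\C^*)^n$-equivariant principal $H$-bundle $\scrP_Z := \scrP \times_{\scrXE} Z$ over the smooth toric variety $Z$, whose fan is $\widetilde{\Sigma}$. The same lemma transports reductions of structure group back and forth. For the unipotent case we also need \Cref{thm-equivalence} together with \Cref{rem: trivial bundle under equivalence}: these say that equivariant triviality of $\scrP_Z$ (with trivial action on $H$) is equivalent to equivariant triviality of $\scrP$. So the whole problem is reduced to toric principal bundles on the toric variety $Z$. There the natural plan is to invoke the corresponding result for toric varieties in \cite{MR4767486}, namely the statement that toric principal bundles with nilpotent (resp. unipotent) structure group split (resp. are trivial). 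If that result is only available for reductive groups, I would prove it directly as follows.

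\emph{Structure of $H$.} A connected nilpotent linear algebraic group over $\C$ decomposes as $H = T_H \times U$, with $T_H$ its unique maximal torus (central) and $U = H_u$ unipotent. If $H$ is not connected, I would first handle $H^\circ$ and then argue with the finite component group. Since $Z$ is connected, the reduction to $H/H^\circ$ is a $(\C^*)^n$-equivariant finite étale cover, and $Z$ is simply connected whenever it is an open complement of codimension $\ge 2$ in $\mathbb{A}^n$. This is the one point where I expect genuine care to be needed: $Z$ may contain codimension-one removed loci only if rays are missing, but $\Sigma$ spans, so every $e_i$ is a ray of $\widetilde{\Sigma}$ and $V(J_\Sigma)$ has codimension $\ge 2$.

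\emph{Local equivariant trivialization and transition functions.} On each affine chart $U_{\widetilde\sigma}\subset Z$, the bundle is equivariantly trivial with $(\C^*)^n$ acting on $H$ via a homomorphism $\phi_\sigma\colon (\C^*)^n \to H$, as in \cite[Lemma 2.8]{BDP2016}. Since the image of a torus in $H$ lies in a torus, and $T_H$ is the unique maximal torus, each $\phi_\sigma$ lands in the central torus $T_H$. The transition functions are $\psi_{\sigma,\sigma'}(t\cdot z_0)=\phi_{\sigma'}(t)\phi_\sigma(t)^{-1}$, so they are valued in $T_H$, which is a torus. The cocycle therefore defines a $(\C^*)^n$-equivariant $T_H$-bundle whose extension to $H$ is $\scrP_Z$. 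This gives an equivariant reduction of $\scrP_Z$ to the maximal torus, i.e.\ an equivariant splitting. Pulling back via \Cref{lemma: reduction of structure group} then gives the splitting of $\scrP$.

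\emph{Unipotent case.} If $H$ is unipotent, then $T_H$ is trivial, so every $\phi_\sigma$ is trivial. Hence all transition functions are identically $1$, and $\scrP_Z \cong Z\times H$ with trivial action on $H$. By \Cref{thm-equivalence} and \Cref{rem: trivial bundle under equivalence}, $\scrP$ is then equivariantly isomorphic to $\scrXE\times H$ with $\mathcal{T}$ acting trivially on $H$.

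The main obstacle is justifying the local equivariant triviality with homomorphisms $\phi_\sigma$ for non-reductive $H$ (and then for disconnected nilpotent $H$). For the first of these I would rely on \cite[Lemma 2.8]{BDP2016}, which holds over $\C$ for any linear algebraic $H$.
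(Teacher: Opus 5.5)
Your proof is correct, and its skeleton is the paper's proof: pass to $\scrP_Z$ on the smooth toric variety $Z$, split or trivialize there, and transport back using \Cref{lemma: reduction of structure group}, \Cref{thm-equivalence} and \Cref{rem: trivial bundle under equivalence}.

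The difference is in the toric-variety input. The paper simply cites \cite[Corollary 3.4]{BDP2016} for both statements on $Z$: nilpotent bundles split, unipotent ones are trivial with trivial action. Your fallback instead reproves this from \cite[Lemma 2.8]{BDP2016} and the decomposition $H^\circ = T_H \times H_u$, where $T_H$ is the unique, central maximal torus. This makes your argument self-contained, and it is sound with one normalization you should state explicitly.

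You must first adjust each local equivariant trivialization so that the framing point goes to $(z_0,e)$. This conjugates $\phi_\sigma$ by an element of $H$, but keeps it in $T_H$, because $T_H$ is the unique maximal torus of the normal subgroup $H^\circ$.

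Without this normalization, equivariance only gives $\psi_{\sigma,\sigma'}(t\cdot z_0)=\phi_{\sigma'}(t)\,\psi_{\sigma,\sigma'}(z_0)\,\phi_\sigma(t)^{-1}$, which need not lie in $T_H$. With it, the transition functions are $T_H$-valued on the open orbit. This then extends to all of $U_{\widetilde\sigma\cap\widetilde\sigma'}$ by density, since $T_H$ is closed in $H$.

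A small correction to your step on disconnected $H$: $V(J_\Sigma)$ has codimension $\ge 2$ because each $\rho_i$ is, by definition of a stacky fan, a ray of $\Sigma$, so each $\mathrm{cone}(e_i)$ lies in $\widetilde\Sigma$. The hypothesis that $\Sigma$ spans $N_{\Q}$ plays no role there.
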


\begin{proof}
Suppose $\scrP$ is a $\mathcal{T}$-equivariant principal $H$-bundle over $\scrX(\boldsymbol{\Sigma})$.
Since $Z$ is a smooth toric variety, it follows from \cite[Corollary 3.4]{BDP2016} that $\scrP_Z$ splits equivariantly.
Hence, by \Cref{lemma: reduction of structure group}, we obtain $\scrP$ splits equivariantly.

If $H$ is unipotent, then by \cite[Corollary 3.4]{BDP2016}, the principal $H$-bundle $\mathscr{P}_Z$ is trivial, i.e., isomorphic to $X \times H$ with $(\mathbb{C}^*)^n$ acting trivially on $H$. 
Therefore, by \Cref{rem: trivial bundle under equivalence}, the principal $H$-bundle $\scrP$ is isomorphic to the trivial $H$-bundle $\scrX(\boldsymbol{\Sigma}) \times H$, with $\mathcal{T}$ acting trivially on $H$.
\end{proof}

%--------------------------------------------------------
%--------------------------------------------------------
\section{Appendix}
\label{sec: Appendix (Strict Actions)}
%--------------------------------------------------------
%--------------------------------------------------------

In this section, we study the strictification of actions on groupoids. 
We prove that the strictification functor sends equivariant morphisms to strict morphisms.

Let $S$ be a scheme, and let $H$ be a group-valued functor on the category of $S$-schemes.
Let $H\hbox{-}\mathfrak{Grpd}/S$ denote the $2$-category of $H$-groupoids over $S$, see \cite[\S 1.2]{Romagny}. 
We note that this 2-category is defined in the same way as the one introduced in \Cref{sec: action of picard stacks}.

\begin{proposition}[{\cite[Proposition 1.5]{Romagny}}]
The inclusion functor of the $2$-category of strict $H$-groupoids, as a fully faithful sub-2-category of $H\hbox{-}\mathfrak{Grpd}/S$, is a $2$-equivalence.
More precisely, there is a ``strictification'' functor 
$$
    \varphi \colon H\hbox{-}\mathfrak{Grpd}/S \to H\hbox{-}\mathfrak{Grpd}/S
$$
% $H\hbox{-}\mathfrak{Grpd}/S \to H\hbox{-}\mathfrak{Grpd}/S$ 
sending any $H$-groupoid to an isomorphic $H$-groupoid with strict action.
\end{proposition}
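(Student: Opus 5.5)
The statement is Romagny's Proposition 1.5, so the plan is to recall his strictification construction and check the two claims: the inclusion of strict $H$-groupoids is fully faithful, and every $H$-groupoid is isomorphic to a strict one.

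\textbf{Full faithfulness.} This holds by definition. The sub-2-category of strict $H$-groupoids is taken with all $H$-morphisms (strict or not) and all 2-morphisms between them. Hence the hom-groupoids agree with those of $H\hbox{-}\mathfrak{Grpd}/S$, and it only remains to prove essential surjectivity.

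\textbf{The construction $\varphi(\mathcal{X})$.} Given an $H$-groupoid $\mathcal{X}$ with action $a$, associativity 2-arrow $\mu$ and identity 2-arrow $\eta$, define $\varphi(\mathcal{X})$ over each $S$-scheme $U$ as follows.
\begin{itemize}
\item Objects are pairs $(x,h)$ with $x\in\mathcal{X}(U)$ and $h\in H(U)$, thought of as the formal translate $x\cdot h$.
\item A morphism $(x,h)\to(x',h')$ is a morphism $x\cdot h\to x'\cdot h'$ in $\mathcal{X}(U)$.
\item The action is $(x,h)\cdot k:=(x,hk)$ on objects.
\item On a morphism $f\colon x\cdot h\to x'\cdot h'$, the translate by $k$ is the conjugate of $f\cdot k$ by the 2-arrows $\mu\colon x\cdot(hk)\simeq (x\cdot h)\cdot k$ and $\mu\colon x'\cdot(h'k)\simeq (x'\cdot h')\cdot k$.
\end{itemize}
On objects the action is strictly associative and unital, because multiplication in $H$ is. On morphisms, strictness means the conjugated maps compose correctly. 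This reduces exactly to the pentagon-type coherence of $\mu$ and the compatibility of $\mu$ with $\eta$ in Romagny's definition of an action. Checking this is the step needing care.

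\textbf{The equivalence.} The functor $\varphi(\mathcal{X})\to\mathcal{X}$, $(x,h)\mapsto x\cdot h$ (identity on morphisms), is fully faithful by construction. It is essentially surjective via $x\simeq x\cdot e$ through $\eta$. It is an $H$-morphism with 2-arrow given by $\mu$, and its $H$-morphism axioms again reduce to the coherence of $\mu$ and $\eta$.

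\textbf{Stack condition.} If $\mathcal{X}$ is a stack, $\varphi(\mathcal{X})$ is equivalent to it as a fibered category, so descent transfers. Alternatively, one stackifies and observes that strictness is preserved because the action on objects is literal.

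\textbf{Functoriality.} An $H$-morphism $(F,q)\colon\mathcal{X}\to\mathcal{Y}$ induces $\varphi(F)(x,h)=(F(x),h)$, sending a morphism $f$ to the conjugate of $F(f)$ by $q$. This makes $\varphi$ a 2-functor. Whether $\varphi(F)$ is strict, which is the later claim in the Appendix, is not needed for the present proposition.

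The main obstacle is the bookkeeping of coherence diagrams: verifying that the morphism-level action is strictly associative, and that the comparison functor satisfies the $H$-morphism axioms. I would handle both by writing every relevant composite as a conjugate of $f\cdot k$ by instances of $\mu$ and $\eta$, then invoking the action axioms (Romagny, Def. 1.3).
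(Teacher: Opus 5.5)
Your proposal is correct. The overall strategy is Romagny's: objects are pairs of an object and a group element, $H$ acts on the group coordinate, and the comparison functor is $(x,h)\mapsto x\cdot h$. What differs is how you define the hom-sets, and this moves the coherence bookkeeping to a different place.

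\textbf{The paper's model.} A morphism $(g,x)\to(h,y)$ is a morphism $x\to(g^{-1}h)\cdot y$. Translation by $\gamma$ then leaves the representative unchanged, so strictness of the action is immediate. The price is that composition and identities must be twisted by $\mu_{\mathcal M}^{-1}$ and $\eta_{\mathcal M}^{-1}$. It is the category axioms for $\widetilde{\mathcal M}$, and the functoriality of $S_{\mathcal M}$, that use up the action axioms.

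\textbf{Your model.} A morphism $(x,h)\to(x',h')$ is a morphism $x\cdot h\to x'\cdot h'$. The category structure and the full faithfulness of the comparison functor are then tautological. The action axioms are spent instead on proving $(f^k)^l=f^{kl}$ and $f^e=f$ for the conjugated translate $f^k=\mu^{-1}\circ(f\cdot k)\circ\mu$. This does go through. It uses the associativity coherence of $\mu$, naturality of $\mu_{-,k,l}$ applied to $f$ itself, and the unit triangle $\eta_{x\cdot h}\circ\mu_{x,h,e}=\mathrm{id}$ together with naturality of $\eta$.

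\textbf{Relation between them.} The two models are isomorphic as strict $H$-groupoids. After switching left and right actions, $S_{\mathcal M}$ identifies the paper's hom-sets with $\mathrm{Hom}(g\cdot x,h\cdot y)$, and this identification is compatible with the two actions.

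\textbf{The trade-off in the next result.} It shows up again in \Cref{prop: strictification morphism}. In the paper's model, $\widetilde{\pi}$ is strict for free, but its functoriality needs \eqref{diag equivariant 1} and \eqref{diag equivariant 2}. In your model, $\varphi(F)$ is a functor for free, since it is a conjugate of $F$. Instead, \eqref{diag equivariant 1} together with naturality of $q$ is needed to prove its strictness, $\varphi(F)(f^k)=\varphi(F)(f)^k$.

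\textbf{The step you leave implicit.} You need that an $H$-morphism whose underlying functor is an equivalence is an equivalence in $H\hbox{-}\mathfrak{Grpd}/S$. In your model this can be made explicit. Take the quasi-inverse $x\mapsto(x,e)$ with $f\mapsto\eta^{-1}\circ f\circ\eta$. Its equivariance $2$-arrow is $\eta^{-1}_{x\cdot k}\colon(x,k)\to(x\cdot k,e)$.
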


Let us describe the functor $\varphi$. 
Let $\widetilde{\mathcal{M}}$ denote the strictification of the $H$-groupoid $\mathcal{M}$ over $S$; that is, $\widetilde{\mathcal{M}} : = \varphi(\mathcal{M})$. 
For any morphism $T \to S$, the groupoid $\widetilde{\mathcal{M}}(T)$ is defined as follows:
\begin{itemize}
\item The objects are
$$
    {\rm{Ob}}(\widetilde{\mc M}) (T) 
        = \{(g,x) \mid g \in {\rm{Ob}}(H(T)) \text{ and } x \in {\rm{Ob}}(\mc{M}(T)) \}.
$$ 

\item A morphism $\widetilde{\phi} \colon (g,x) \to (h,y)$ in $\widetilde{\mc M}(T)$ is represented by a morphism
$$
    \phi \colon x \to (g^{-1}h)\cdot y
$$
in $\mc M (T)$.
The composition $(g,x) \xrightarrow{\widetilde{\phi}} (h,y) \xrightarrow{\widetilde{\psi}} (k,z)$ of morphisms in $\widetilde{\mc M} (T)$, is given by the composite
$$
    x \xrightarrow{\phi} (g^{-1}h)\cdot y 
        \xrightarrow{(g^{-1}h) \cdot \psi} (g^{-1}h)\cdot((h^{-1}k) \cdot z)
        \xrightarrow{\mu_{\mc M}^{-1}} ((g^{-1}h)(h^{-1}k))\cdot z 
        = (g^{-1}k) \cdot z
$$
in $\mc M (T)$.
The identity morphism of $(g,x)$ is represented by $\eta_{\mc M}^{-1} \colon x \to e \cdot x$.

\item The strict action of $H$ on $\widetilde{\mc M}$ given by
$$
    \gamma \cdot (g,x) = (\gamma \cdot g,x) 
        \text{ and } 
    \gamma \cdot \left(\widetilde{\phi} \colon (g,x) \to (h,y) \right)
        = \widetilde{\phi} \colon (\gamma g,x) \to (\gamma h,y),
$$
where $\gamma \cdot \widetilde{\phi} = \widetilde{\phi}$ is now considered as a map from $(\gamma g,x)$ to $(\gamma h,y)$.
This is well-defined since 
$$
    x 
        \xrightarrow{\phi} (g^{-1}h) \cdot x
        = ((\gamma g)^{-1} (\gamma h)) \cdot x\,.
$$
\end{itemize}
In \cite[Proposition 1.5]{Romagny}, it is shown that the functor $S_{\mc M} \colon \widetilde{\mc M} \to \mc{M}$, defined by
$$
    (g,x) \mapsto g \cdot x 
        \text{ and } 
    \left(\widetilde{\phi}  \colon (g,x) \to (h,y)\right) 
        \mapsto 
    \left(g\cdot x \xrightarrow{g \cdot \phi} g \cdot ((g^{-1}h) \cdot y) \xrightarrow{\mu_M^{-1}} (g \cdot (g^{-1}h) ) \cdot y = h \cdot y\right),
$$
is an $H$-isomorphism.

Note that if $\mathscr{X}$ is an $H$-stack, then by the universal property of stackification, 
% \[
% % https://tikzcd.yichuanshen.de/#N4Igdg9gJgpgziAXAbVABwnAlgFyxMJZABgBpiBdUkANwEMAbAVxiRAB12B3LWPB2ME4BbOjgAWcAMYAnYAA0AvopCLS6TLnyEUZAIxVajFmxFjJshSrUbseAkT3lD9Zq0Qd2oidLlKA5KrqIBh22o6kBtSuJh6qhjBQAObwRKAAZjIQwkhkIDgQSABMNiCZ2UhO+YWIRdQMdABGMAwACpr2OiAyWEniOCD1WGDuIFAQODiJQRlZOYh5BUgAzEMjbONMjQys1OIwdFBIYEwMDNQ4dFgMbJDrpeXzizVVMaMAygD6Ql7mvlYqepNFrtMIODw9PoDRQURRAA
% \begin{tikzcd}
% \widetilde{\mathscr{X}} \arrow[r] \arrow[rd, Rightarrow, pos= .1, shorten >=40pt] \arrow[d, "S_{\mathscr{X}}"'] & \mathscr{X}' \\
% \mathscr{X} \arrow[ru, dotted]                                                         & {}          
% \end{tikzcd}
% \]
it follows that the stackification of the strictification $\widetilde{\mathscr{X}}$ is isomorphic to $\mathscr{X}$.
Since stackification commutes with (finite) fibered products, we may
%, by the above proposition, 
restrict our attention to strict actions. 

\begin{proposition}
\label{prop: strictification morphism}
% Let $H$ be a group scheme over $S$, and 
Let $\mc{M}$ and $\mc{N}$ be $H$-groupoids over $S$. 
If $\pi \colon \mc{M} \to \mc{N}$ is an $H$-equivariant morphism, then there exists a strict $H$-morphism $\widetilde{\pi} \colon \widetilde{\mc M} \to \widetilde{\mc N}$ such that the following diagram commutes
\begin{equation}
\label{diag: (non)-strict-action-comm-diag}
% https://tikzcd.yichuanshen.de/#N4Igdg9gJgpgziAXAbVABwnAlgFyxMJZABgBpiBdUkANwEMAbAVxiRAB12B3LWPB2ME4BbOjgAWAY0bAAsgF95IeaXSZc+QigCM5KrUYs2IsVJkLlqkBmx4CRMtv31mrRB268Y-QSYnSGYAA5RUs1W00iXSdqFyN3PzNAkM5Sanl9GCgAc3giUAAzACcIYSQyEBwIJAAmWMM3Dx4+LAEYIXY0LCVqBjoAIxgGAAV1Oy0QIqxs8RwQXqwwRrgIBl55kHEYOigkMCYGBmocOla2SCWwkGLS8uPqxF0DVzYAZQB9DtF-c1DegaGowi9ncDBgBTmKkKJTKj3uSAAzPUXglOlgNn1BiMxpFQeDIVYbrC6pUHkjnvEQB8vqYAsE-iBMYCcSDJtNZlcibV4XCKY0AI4YgHY4ETMEQjGLRpQCBMfpg5QUeRAA
\begin{tikzcd}
\widetilde{\mathcal{M}} \arrow[d, "\widetilde{\pi}"'] \arrow[r, "S_{\mathcal{M}}"] & \mathcal{M} \arrow[d, "\pi"] \\
\widetilde{\mathcal{N}} \arrow[r, "S_{\mathcal{N}}"'] \arrow[ru, "q", Rightarrow]  & {\mathcal{N}\,,}            
\end{tikzcd}
\end{equation}
where $q \colon a_{\mc N} \circ (\mathrm{id}_{H} \times \pi) \implies \pi \circ a_{\mc {M}}$ is the 2-arrow corresponding to the equivariant map $\pi$, see the diagram \eqref{diag: equivariant map 2-isomorphism}.
% $q \colon S_{\mc N} \circ \widetilde{\pi} \to \pi \circ S_{M}$ is a $2$-isomorphism induced from \eqref{diag: equivariant map 2-isomorphism}.
\end{proposition}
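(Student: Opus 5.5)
We define $\widetilde{\pi}$ explicitly on the strictifications described above and then check, in order, that it is a functor, that it is a strict $H$-morphism, and that $q$ supplies the required $2$-arrow in \eqref{diag: (non)-strict-action-comm-diag}.

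\emph{Construction.} On objects we set $\widetilde{\pi}(g,x) := (g,\pi(x))$. For a morphism $\widetilde{\phi}\colon (g,x)\to(h,y)$, represented by $\phi\colon x\to (g^{-1}h)\cdot y$ in $\mathcal{M}(T)$, we let $\widetilde{\pi}(\widetilde{\phi})$ be represented by the composite
$$
\pi(x)\xrightarrow{\pi(\phi)} \pi((g^{-1}h)\cdot y)\xrightarrow{q^{-1}} (g^{-1}h)\cdot \pi(y)
$$
in $\mathcal{N}(T)$. This is a morphism $(g,\pi(x))\to(h,\pi(y))$ in $\widetilde{\mathcal N}(T)$.

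\emph{Strictness.} Since $\gamma\cdot(g,x)=(\gamma g,x)$ and morphisms are unchanged by the action, we get $\widetilde{\pi}(\gamma\cdot -)=\gamma\cdot\widetilde{\pi}(-)$ on the nose, both on objects and on morphisms. So the identity $2$-arrow serves as $q_{\widetilde\pi}$. The conditions \eqref{diag equivariant 1} and \eqref{diag equivariant 2} then hold trivially, because all the $2$-arrows involved are identities for strict actions.

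\emph{Functoriality.} This is the step requiring real work. For identities, $\widetilde{\pi}(\eta_{\mathcal M}^{-1}) = q^{-1}\circ\pi(\eta_{\mathcal M}^{-1})$, and this equals $\eta_{\mathcal N}^{-1}$ by \eqref{diag equivariant 2}. For composition, take $\widetilde\phi\colon(g,x)\to(h,y)$ and $\widetilde\psi\colon(h,y)\to(k,z)$, and put $a=g^{-1}h$ and $b=h^{-1}k$. We must compare two morphisms. The first is $\pi$ applied to $\mu_{\mathcal M}^{-1}\circ (a\cdot\psi)\circ\phi$, followed by $q^{-1}$. The second is
$$
\mu_{\mathcal N}^{-1}\circ \bigl(a\cdot(q^{-1}\circ\pi(\psi))\bigr)\circ q^{-1}\circ\pi(\phi).
$$
Naturality of $q$ in the $\mathcal M$-variable gives $q^{-1}\circ\pi(a\cdot\psi)=(a\cdot\pi(\psi))\circ q^{-1}$. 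With this, the comparison reduces to the hexagon \eqref{diag equivariant 1} (inverted) relating $\mu_{\mathcal M}$, $\mu_{\mathcal N}$ and $q$. The main obstacle I expect is bookkeeping the directions of the various inverses; I would verify it by drawing the single diagram with $\pi(\phi)$ stripped off the left.

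\emph{The $2$-arrow.} We have $S_{\mathcal N}\widetilde\pi(g,x)=g\cdot\pi(x)$ and $\pi S_{\mathcal M}(g,x)=\pi(g\cdot x)$. We take the component at $(g,x)$ to be $q_{g,x}\colon g\cdot\pi(x)\to\pi(g\cdot x)$. Naturality with respect to $\widetilde\phi$ unwinds as follows. The image $S_{\mathcal N}\widetilde\pi(\widetilde\phi)$ equals $\mu_{\mathcal N}^{-1}\circ g\cdot(q^{-1}\circ\pi(\phi))$, and the image $\pi S_{\mathcal M}(\widetilde\phi)$ equals $\pi(\mu_{\mathcal M}^{-1})\circ\pi(g\cdot\phi)$. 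Using naturality of $q$ in $\phi$ together with \eqref{diag equivariant 1} once more, these agree after composing with $q$. Hence $q$ is a $2$-isomorphism $S_{\mathcal N}\circ\widetilde\pi\Rightarrow\pi\circ S_{\mathcal M}$, which is exactly the $2$-arrow required in \eqref{diag: (non)-strict-action-comm-diag}.
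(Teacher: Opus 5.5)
Your proposal is correct and follows essentially the same route as the paper. You use the same definition of $\widetilde{\pi}$ via $q^{-1}\circ\pi(\phi)$, and you get functoriality the same way: from \eqref{diag equivariant 2} for identities, and from naturality of $q$ plus the inverted diagram \eqref{diag equivariant 1} for composition. Strictness holds on the nose, and $q$ itself serves as the $2$-arrow, with its naturality checked by the same combination of naturality and \eqref{diag equivariant 1}; the only difference is that you verify strictness before functoriality, which does not matter.
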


\begin{proof}
% Suppose $q \colon a_{\mc N} \circ (\mathrm{id}_{H} \times \pi) \implies \pi \circ a_{\mc {M}}$ be the 2-arrow corresponding to the equivariant map $\pi$, see diagram \eqref{diag: equivariant map 2-isomorphism}.
Let us define the functor $\widetilde{\pi} \colon \widetilde{\mc M} \to \widetilde{\mc N}$ by $\widetilde{\pi}(g,x) = (g,\pi(x))$ and for a morphism $\widetilde{\phi} \colon (g,x) \to (h,y)$ represented by $\phi \colon x \to (g^{-1}h)\cdot y$ in $\mc M (T)$, the morphism $\widetilde{\pi}(\widetilde{\phi}) \colon (g,\pi(x)) \to (h,\pi(y))$ is represented by the composition
$$
        \pi(x) \xrightarrow{\pi(\phi)} \pi((g^{-1}h)\cdot y) \xrightarrow{q^{-1}} (g^{-1}h) \cdot \pi(y)
$$
in $\mc N (T)$.

Let us first check that $\widetilde{\pi}$ is indeed a functor. 
The map $\widetilde{\pi}(\widetilde{\psi} \circ \widetilde{\phi})$ is represented by the composition 
% $q^{-1} \circ \pi (\mu_{\mc M}^{-1} \circ ((g^{-1}h) \cdot \psi) \circ \phi)$.
$$
\pi(x) \xrightarrow{\pi(\phi)} \pi((g^{-1}h)\cdot y)
        \xrightarrow{\pi((g^{-1}h)\cdot \psi)} \pi((g^{-1}h)\cdot((h^{-1}k) \cdot z))
        \xrightarrow{\pi(\mu_{\mc M}^{-1})} \pi((g^{-1}k) \cdot z)
        \xrightarrow{q^{-1}} (g^{-1}k) \cdot \pi(z)\,,
$$
and the map $\widetilde{\pi}(\widetilde{\psi}) \circ \widetilde{\pi}(\widetilde{\phi})$ is represented by the composition
$$
    \pi(x) \xrightarrow{\pi(\phi)} \pi((g^{-1}h)\cdot y) \xrightarrow{q^{-1}} (g^{-1}h) \cdot \pi(y) 
    \xrightarrow{(g^{-1}h) \cdot (q^{-1}\circ \pi(\psi))} (g^{-1}h) \cdot ((h^{-1}k) \cdot \pi(z))
    \xrightarrow{\mu_{\mc N}^{-1}} (g^{-1}k)\cdot \pi(z)\,.
$$
We want to show that the following diagram is commutative:
\[
% https://tikzcd.yichuanshen.de/#N4Igdg9gJgpgziAXAbVABwnAlgFyxMJZABgBpiBdUkANwEMAbAVxiRAB120sAKHgcwB6wALQBGAL4ALAJScAxlAg4ABAE8ZICaXSZc+QigDM5KrUYs2nbnyGjJshUpx8pw8RIDWMlU+UqALxlNbV1sPAIiAFZTanpmVkQOLl4Bd0lvX3ZFfyCtHRAMcIMiMjEzeMsktPtpHz9Va14NfLD9SONScriLRJAaj1ksnNVXdK967OcsmyCQgqL2w2QY7vMEtgGMuSn-Jp4gzlIAOi0zGCh+eCJQADMAJwgAWyQyEBwIJBN1qpAAR3GIGoDDoACMYAwAAp6CKGEAMGC3HCtEAPZ6vagfJBiHobJL7LZ1BozbCaYFgiHQ4odeGI5GhVGPF6IHHvT6IABMuN++04TyYAH1gHz5CoALIScZk+EUqEwkpJBFIlFo5lctlIGI-PoA2pAmXguXUuFK+kFVVfTHsgAs3L6hKGxJ4uo8Ciw91FvK4pOlIMNVKWbFNKqZSFtGsQWsqfT5guF7CeooAcpK9eT-fKacGJBQJEA
\begin{tikzcd}
\pi((g^{-1}h)\cdot y) \arrow[d, "q^{-1}"] \arrow[rrr, "\pi((g^{-1}h)\cdot \psi)"] &  &  & \pi((g^{-1}h)\cdot((h^{-1}k) \cdot z)) \arrow[rr, "\pi(\mu_{\mc M}^{-1})"] &  & \pi((g^{-1}k) \cdot z) \arrow[d, "q^{-1}"] \\
(g^{-1}h) \cdot \pi(y) \arrow[rrr, "(g^{-1}h) \cdot (q^{-1}\circ \pi(\psi))"]     &  &  & (g^{-1}h) \cdot ((h^{-1}k) \cdot \pi(z)) \arrow[rr, "\mu_{\mc N}^{-1}"]    &  & {(g^{-1}k)\cdot \pi(z)\,.}                
\end{tikzcd}
\]
Using diagram \eqref{diag equivariant 1}, this is equivalent to showing that the following diagram is commutative:
\[
% https://tikzcd.yichuanshen.de/#N4Igdg9gJgpgziAXAbVABwnAlgFyxMJZABgBpiBdUkANwEMAbAVxiRAB120sAKHgcwB6wALQBGAL4ALAJScAxlAg4ABAE8ZICaXSZc+QigDM5KrUYs2nbnyGjJshUpx8pw8RIDWMlU+UqALxlNbV1sPAIiMjEzemZWRBABdwcfP1VrXg0tHRAMcIMiExjqOMtE5PtpNPZFf1cUrxq6jK5eIJDc-P1IlABWU1KLBKS7D1lfWudJmx43Ku90wM6wnsNkAZLzeLZK8ebphoWD-0yeDs5SagkzGCh+eCJQADMAJwgAWyQyEBwIJAATEMdokAI6NEDUBh0ABGMAYAAU9BFDCAGDBnjgci93l9ED8-kgxMDyhw2rZGo4pqcuNhNFDYfCkQVemiMVjQiA3p9AdRCYgTNtSXtUpMWioeOCqgosK95DNeNY6fS0YzEcjCol0ZjsVzcUS+f9EAAWEkjKUeXXcvGm35GgZCkYi6pi6YWyRW-UCw1IB0MLBgEZKJgw9GQkBSGB0KBsSCB8M4OhYBixgisG4SIA
\begin{tikzcd}
\pi((g^{-1}h)\cdot y) \arrow[d, "q^{-1}"] \arrow[rrr, "\pi((g^{-1}h)\cdot \psi)"] &  &  & \pi((g^{-1}h)\cdot((h^{-1}k) \cdot z)) \arrow[rr, "q^{-1}"]              &  & (g^{-1}h) \cdot \pi((h^{-1}k)\cdot z) \arrow[d, "(g^{-1}h) \cdot q^{-1}"] \\
(g^{-1}h) \cdot \pi(y) \arrow[rrr, "(g^{-1}h) \cdot (q^{-1}\circ \pi(\psi))"]     &  &  & (g^{-1}h) \cdot ((h^{-1}k) \cdot \pi(z)) \arrow[rr, Rightarrow, no head] &  & {(g^{-1}h) \cdot ((h^{-1}k) \cdot \pi(z))\,,}                            
\end{tikzcd}
\]
which is equivalent to the commutativity of the following diagram:
\[
% https://tikzcd.yichuanshen.de/#N4Igdg9gJgpgziAXAbVABwnAlgFyxMJZABgBpiBdUkANwEMAbAVxiRAB120sAKHgcwB6wALQBGAL4ALAJScAxlAg4ABAE8ZICaXSZc+QigDM5KrUYs2nbnyGjJshUpx8pw8RIDWMlU+UqALxlNbV1sPAIiMjEzemZWRBABdwcfP1VrXg0tHRAMcIMiExjqOMtE5PtpNPZFf0zXFK85WudAltIAOi0zGCh+eCJQADMAJwgAWyQyEBwIJAAmUosEkABHJpBqBjoAIxgGAAU9CMMQBhhhnByR8anEGbmkMWX4qy5eSo9HVvqubE02z2B2OBUiiQuVxuIDGk0W1CeiBM5jeFTs3xqdQyHx41gBW3OwKOJ0KEMu11CMLuzwR8yRr3K602EgoEiAA
\begin{tikzcd}
\pi((g^{-1}h)\cdot y) \arrow[d, "q^{-1}"] \arrow[rrr, "\pi((g^{-1}h)\cdot \psi)"] &  &  & \pi((g^{-1}h)\cdot((h^{-1}k) \cdot z)) \arrow[d, "q^{-1}"] \\
(g^{-1}h) \cdot \pi(y) \arrow[rrr, "(g^{-1}h) \cdot \pi(\psi)"]                   &  &  & {(g^{-1}h) \cdot \pi((h^{-1}k)\cdot z)\,.}                
\end{tikzcd}
\]
This follows since $q \colon a_{\mc N} \circ (\mathrm{id}_{H} \times \pi) \implies \pi \circ a_{\mc {M}}$ is a $2$-arrow.
% (apply naturality of $q$ to the morphism $(\mathrm{id}_{g^{-1}h} \colon g^{-1}h \to g^{-1}h,(\psi \colon y \to (h^{-1}k) \cdot y))$
Hence, $\widetilde{\pi}(\widetilde{\psi} \circ \widetilde{\phi}) = \widetilde{\pi}(\widetilde{\psi}) \circ \widetilde{\pi}(\widetilde{\phi})$.
Note that the map $\widetilde{\pi}(\mathrm{id}_{(g,x)})$ is represented by the composition
$$
    \pi(x) 
        \xrightarrow{\pi(\eta_{\mc M}^{-1})} \pi(e \cdot x) 
        \xrightarrow{q^{-1}} e \cdot \pi(x)\,,
$$
which is the morphism $\eta_{\mc N}^{-1}$ by diagram \eqref{diag equivariant 2}, the representative for $\mathrm{id}_{(g,\pi(x))}$.
Hence, $\widetilde{\pi}(\mathrm{id}_{(g,x)}) = \mathrm{id}_{(g,\pi(x))}$, and $\widetilde{\pi}$ is a functor.

Now we show that the morphism $\widetilde{\pi}$ is strict, i.e. the following diagram is commutative:
% $\widetilde{q}$ is identity.
\[
% https://tikzcd.yichuanshen.de/#N4Igdg9gJgpgziAXAbVABwnAlgFyxMJZABgBpiBdUkANwEMAbAVxiRAAkACAHW7wFt4PbgHcssPA1jBe-OjgAWAY0bAAsgF8NIDaXSZc+QigCM5KrUYs2vMRKxSYM7nMUqG6rTr0gM2PAREZCYW9MysiBzCAvC24jCS0rLyyqoAcl66+v5GRGYh1GHWkXH2js6uqR4ZGrykAHQ6FjBQAObwRKAAZgBOEPxIZCA4EEgATIVWESDJij38wOIaAPpcvDFwwnYJDkncaFja1Ax0AEYwDAAKBgHGID1YrQo43t19A4hDI0hmluFsdGWzm2iScsyqni8xzOF2uOUCkQYMC6LyyIF6-R+1G+iAAzJN-iVRPFQc4DkcQCdzlcbrlEcjUT4MR8JsNRniCcUQIDgSTdmCXCl3MAahSqbDaQjKQymhogA
\begin{tikzcd}
H \times \widetilde{\mathcal{M}} \arrow[d, "\mathrm{id}_H \times \widetilde{\pi}"'] \arrow[r, "a_{\widetilde{\mathcal{M}}}"] & \widetilde{\mathcal{M}} \arrow[d, "\widetilde{\pi}"] \\
H \times\widetilde{\mathcal{N}} \arrow[r, "a_{\widetilde{\mathcal{N}}}"]                                                     & {\widetilde{\mathcal{N}}\,.}                        
\end{tikzcd}
\]
% \[
% % https://tikzcd.yichuanshen.de/#N4Igdg9gJgpgziAXAbVABwnAlgFyxMJZABgBpiBdUkANwEMAbAVxiRAAkACAHW7wFt4PbgHcssPA1jBe-OjgAWAY0bAAsgF8NIDaXSZc+QigCM5KrUYs2vMRKxSYM7nMUqG6rTr0gM2PAREZCYW9MysiBzCAvC24jCS0rLyyqoAcl66+v5GRGYh1GHWkXH2js6uqR4Z2hoWMFAA5vBEoABmAE4Q-EhkIDgQSABMhVYRIMmKHfzA4hoA+gDi0ViCcMJ2CQ5J3GhY2tQMdABGMAwACgYBxiAdWI0KON7tXT2IfQNIZpbhbHTzzk2iSckyqni8hxOZ0uOUCkQYMDaTyyIE63S+1E+iAAzKNfiVRPFgc49gcQEdThcrrl4YjkT40W8Rv1Bji8cUQP9AUTtiCXCl3MAamSKdDqXDyXTnqjXsNMazvkVxqUtuUAI4iqFU2E3BFIkCHLBgcZQCBMY4InQUDRAA
% \begin{tikzcd}
% H \times \widetilde{\mathcal{M}} \arrow[d, "\mathrm{id}_G \times \widetilde{\pi}"'] \arrow[r, "a_{\widetilde{\mathcal{M}}}"] & \widetilde{\mathcal{M}} \arrow[d, "\widetilde{\pi}"] \\
% H \times\widetilde{\mathcal{N}} \arrow[r, "a_{\widetilde{\mathcal{N}}}"] \arrow[ru, "\widetilde{q}", Rightarrow]             & \widetilde{\mathcal{N}}                             
% \end{tikzcd}
% \]
For an object $(\gamma,(g,x))$ in $H(T) \times \widetilde{M}(T)$, we have 
$
(\widetilde{\pi} \circ a_{\widetilde{\mc M}}) (\gamma,(g,x)) 
    = \widetilde{\pi} (\gamma g,x)
    = (\gamma g , \pi(x)),
$
and
$
(a_{\widetilde{\mc N}} \circ (\mathrm{id}_H \times \widetilde{\pi})) (\gamma, (g,x))
        = a_{\widetilde{\mc N}} (\gamma, (g, \pi(x)))
        = (\gamma g ,\pi(x)).
$
As the action of $\gamma \in H(T)$ on a morphism $\widetilde{\phi} \colon (g,x) \to (h,y)$ in $\widetilde{\mc M} (T)$ does not change its representative in $\mc M (T)$, it follows that for a morphism $(\mathrm{id}_{\gamma}, \widetilde{\phi})$ in $H(T) \times \widetilde{M}(T)$, the morphisms
% NOTE: Only morphisms in $G(T)$ are identity maps. 
% Hence, above we have $(\mathrm{id}_{\gamma}, \widetilde{\phi}) \colon (\gamma,(g,x)) \to (\gamma,(h,y))$, where $\widetilde{\phi} \colon (g,x) \to (h,y)$ in $\widetilde{\mc M}(T)$ instead of  $(s, \widetilde{\phi}) \colon (\gamma,(g,x)) \to (\gamma',(h,y))$, where $s \colon \gamma \to \gamma'$ in $G(T)$ and $\widetilde{\phi} \colon (g,x) \to (h,y)$ in $\widetilde{\mc M}(T)$.
$$
(\widetilde{\pi} \circ a_{\widetilde{\mc M}}) (\mathrm{id}_{\gamma}, \widetilde{\phi})
    = \widetilde{\pi} \left(\widetilde{\phi} \colon (\gamma g,x) \to (\gamma h,y)\right)
    =  \widetilde{\pi} (\widetilde{\phi}) \colon (\gamma g,\pi(x)) \to (\gamma h,\pi(y))\,,
$$
and
$$
(a_{\widetilde{\mc N}} \circ (\mathrm{id}_H \times \widetilde{\pi}))(\mathrm{id}_{\gamma}, \widetilde{\phi})
    = a_{\widetilde{\mc N}}\left(\mathrm{id}_{\gamma}, \left(\widetilde{\pi}(\widetilde{\phi}) \colon (g,\pi(x)) \to (h,\pi(y))\right)\right)
    = \widetilde{\pi}(\widetilde{\phi}) \colon (\gamma g,\pi(x)) \to (\gamma h,\pi(y))
$$
are represented by the same morphism $\pi(x) \xrightarrow{\pi(\phi)} \pi((g^{-1}h)\cdot y) \xrightarrow{q^{-1}} (g^{-1}h) \cdot \pi(y)$ in $\mc M (T)$.
Hence, $(\widetilde{\pi} \circ a_{\widetilde{\mc M}}) (\mathrm{id}_{\gamma}, \widetilde{\phi}) = (a_{\widetilde{\mc N}} \circ (\mathrm{id}_H \times \widetilde{\pi}))(\mathrm{id}_{\gamma}, \widetilde{\phi})$, and the map $\widetilde{\pi}$ is strict.

Finally, we show that diagram \eqref{diag: (non)-strict-action-comm-diag} is $2$-commutative.
Suppose $\widetilde{\phi} \colon (g,x) \to (h,y)$ is a morphism in $\widetilde{\mc M}(T)$.
% represented by the morphism $\phi \colon x \to (g^{-1}h)\cdot y$ in $\mc M (T)$.
Then $S_{\mc N}(\widetilde{\pi}(\widetilde{\phi}))$ is the composition
$$
    g \cdot \pi(x) 
        \xrightarrow{g \cdot \pi(\phi)} g \cdot \pi((g^{-1}h) \cdot y)
        \xrightarrow{g \cdot q^{-1}} g \cdot ((g^{-1}h) \cdot \pi(y))
        \xrightarrow{\mu_{\mc N}^{-1}} h \cdot \pi(y)\,,
$$
and $\pi (S_{\mc M}(\widetilde{\phi}))$ is the composition
$$
    \pi(g\cdot x) 
        \xrightarrow{\pi(g \cdot \phi)} \pi(g \cdot ((g^{-1}h) \cdot y)) \xrightarrow{\pi(\mu_{\mc M}^{-1})}  \pi (h \cdot y)\,.
$$
We want to show that the following diagram is commutative:
\[
% https://tikzcd.yichuanshen.de/#N4Igdg9gJgpgziAXAbVABwnAlgFyxMJZABgBpiBdUkANwEMAbAVxiRAHMACAHW4GMoEHD25osACgAeASk4gAvqXSZc+QigCM5KrUYs2XXgKEix48ewB6wALQb5AC1lHBwgJ7SFSkBmx4CRABM2tT0zKyIHCLGwuZWtvZO0a6mEh6eisp+akQAzCG64WwOySa8Zh5eWaoBKGQaOmH6keUS7C4mMlU+Kv7qyPkNoXoRIK2c4iUd7tK8pAB03b41-cFDhc1jom2lsRbWdo7O-CnpCjowUOzwRKAAZgBOEAC2SGQgOBBIWhujhidlbbicoOLAZbyPF7faifJDBX4GXacACOB3s3Uhr0Q8NhiHyCJa3GeTAA+sBeM8+JwAHLyNHyDFPLHvXEAFmGRUiyJA1AYdAARjAGAAFXo5SIPLDsBw4RlQvEwr6IACsHM23MyIExSHZHyVADY1aNWhYkSCwXKsYa9UhVQStmYKaTyUSqQBZOkJeQZCjyIA
\begin{tikzcd}
g \cdot \pi(x)  \arrow[r, "g \cdot \pi(\phi)"] \arrow[d, "q"'] & g \cdot \pi((g^{-1}h) \cdot y) \arrow[r, "g \cdot q^{-1}"] & g \cdot ((g^{-1}h) \cdot \pi(y)) \arrow[r, "\mu_{\mc N}^{-1}"]      & h \cdot \pi(y) \arrow[d, "q"] \\
\pi(g\cdot x) \arrow[rr, "\pi(g \cdot \phi)"]                  &                                                            & \pi(g \cdot ((g^{-1}h) \cdot y)) \arrow[r, "\pi(\mu_{\mc M}^{-1})"] & {\pi (h \cdot y)\,.}         
\end{tikzcd}
\]
Using diagram \ref{diag equivariant 1}, this is equivalent to showing that the following diagram is commutative:
\[
% https://tikzcd.yichuanshen.de/#N4Igdg9gJgpgziAXAbVABwnAlgFyxMJZABgBpiBdUkANwEMAbAVxiRAHMACAHW4GMoEHD25osACgAeASk4gAvqXSZc+QijIBGKrUYs2vMePa8BQzjIVKQGbHgJEATKW3V6zVohCGJXU4OFxYwA9YABaTXkAC1l-cwBPaWleUgA6K2U7NSdyHXd9Lz9+AJEjEPDImJEzYUSFHRgodngiUAAzACcIAFskMhAcCCRNNz1PEABHEGoGOgAjGAYABRV7dRAOrHYonAyQTp7h6kGkZ10PA1FfapLDKKxpPYPexABmY6HEM-zxqcV2rovfonN6jC6FG7mHziO4PeryIA
\begin{tikzcd}
g \cdot \pi(x)  \arrow[d, "q"'] \arrow[rr, "g \cdot \pi(\phi)"] &  & g \cdot \pi((g^{-1}h) \cdot y) \arrow[d, "q"] \\
\pi(g\cdot x) \arrow[rr, "\pi(g \cdot \phi)"]                   &  & {\pi(g \cdot ((g^{-1}h) \cdot y))\,.}        
\end{tikzcd}
\]
This follows since $q \colon a_{\mc N} \circ (\mathrm{id}_{H} \times \pi) \implies \pi \circ a_{\mc {M}}$ is a $2$-arrow. 
% (apply naturality of $q$ to the morphism $(\mathrm{id}_{g} \colon g \to g,(\phi \colon x \to (g^{-1}h) \cdot y))$.
\end{proof}

%--------------------------------------------------------
%--------------------------------------------------------
\section{Acknowledgments}
%--------------------------------------------------------
%--------------------------------------------------------
R.S. Arora would like to acknowledge the support of IISER Pune - IDeaS Scholarship and Siemens-IISER Ph.D. fellowship. M. Poddar thanks Kiumars Kaveh for explaining his work and sharing many insights with him. His research was supported by a SERB MATRICS Grant: MTR/2019/001613. C. Gangopadhyay was supported by ANRF ECRG Grant: \\
ANRF/ECRG/2025/000347/PMS.
%--------------------------------------------------------
%--------------------------------------------------------
\bibliographystyle{halpha}
\bibliography{Toric}

@misc{stacks-project,
  author       = {The {Stacks project authors}},
  title        = {The Stacks project},
  howpublished = {\url{https://stacks.math.columbia.edu}},
  year         = {2024},
}

@book {Olsson,
    AUTHOR = {Olsson, Martin},
     TITLE = {Algebraic spaces and stacks},
    SERIES = {American Mathematical Society Colloquium Publications},
    VOLUME = {62},
 PUBLISHER = {American Mathematical Society, Providence, RI},
      YEAR = {2016},
     PAGES = {xi+298},
      ISBN = {978-1-4704-2798-6},
   MRCLASS = {14D23 (14D22)},
  MRNUMBER = {3495343},
MRREVIEWER = {Stefan\ Schr\"oer},
       DOI = {10.1090/coll/062},
       URL = {https://doi.org/10.1090/coll/062},
}

@article {MR4492498,
    AUTHOR = {Kaveh, Kiumars and Manon, Christopher},
     TITLE = {Toric principal bundles, piecewise linear maps and {T}its
              buildings},
   JOURNAL = {Math. Z.},
  FJOURNAL = {Mathematische Zeitschrift},
    VOLUME = {302},
      YEAR = {2022},
    NUMBER = {3},
     PAGES = {1367--1392},
      ISSN = {0025-5874,1432-1823},
   MRCLASS = {14M25 (14J60 20E42 51E24)},
  MRNUMBER = {4492498},
MRREVIEWER = {Zhuang\ He},
       DOI = {10.1007/s00209-022-03094-5},
       URL = {https://doi.org/10.1007/s00209-022-03094-5},
}

@article {MR4767486,
    AUTHOR = {Huang, Shaoyu and Kaveh, Kiumars},
     TITLE = {Toric principal bundles, {T}its buildings and reduction of
              structure group},
   JOURNAL = {Doc. Math.},
  FJOURNAL = {Documenta Mathematica},
    VOLUME = {29},
      YEAR = {2024},
    NUMBER = {4},
     PAGES = {815--830},
      ISSN = {1431-0635,1431-0643},
   MRCLASS = {14M25 (32L05)},
  MRNUMBER = {4767486},
MRREVIEWER = {Howard\ M.\ Thompson},
       DOI = {10.4171/dm/962},
       URL = {https://doi.org/10.4171/dm/962},
}

@article {Biswas-Majumdar-Wong,
    AUTHOR = {Biswas, Indranil and Majumder, Souradeep and Wong, Michael
              Lennox},
     TITLE = {Root stacks, principal bundles and connections},
   JOURNAL = {Bull. Sci. Math.},
  FJOURNAL = {Bulletin des Sciences Math\'ematiques},
    VOLUME = {136},
      YEAR = {2012},
    NUMBER = {4},
     PAGES = {369--398},
      ISSN = {0007-4497,1952-4773},
   MRCLASS = {14D23 (14H60 53B15)},
  MRNUMBER = {2923408},
MRREVIEWER = {Francesco\ Bottacin},
       DOI = {10.1016/j.bulsci.2012.03.006},
       URL = {https://doi.org/10.1016/j.bulsci.2012.03.006},
}

@article{alper2025stacks,
  title={Stacks and moduli},
  author={Alper, Jarod},
  year={July 7, 2025},
  url={https://sites.math.washington.edu/~jarod/lecture-notes.html}
}

@article {Romagny,
    AUTHOR = {Romagny, Matthieu},
     TITLE = {Group actions on stacks and applications},
   JOURNAL = {Michigan Math. J.},
  FJOURNAL = {Michigan Mathematical Journal},
    VOLUME = {53},
      YEAR = {2005},
    NUMBER = {1},
     PAGES = {209--236},
      ISSN = {0026-2285,1945-2365},
   MRCLASS = {14A20 (14H10)},
  MRNUMBER = {2125542},
MRREVIEWER = {Ivan\ S.\ Kausz},
       DOI = {10.1307/mmj/1114021093},
       URL = {https://doi.org/10.1307/mmj/1114021093},
}

@article {aldrovandi-NoohiII,
    AUTHOR = {Aldrovandi, Ettore and Noohi, Behrang},
     TITLE = {Butterflies {II}: torsors for 2-group stacks},
   JOURNAL = {Adv. Math.},
  FJOURNAL = {Advances in Mathematics},
    VOLUME = {225},
      YEAR = {2010},
    NUMBER = {2},
     PAGES = {922--976},
      ISSN = {0001-8708,1090-2082},
   MRCLASS = {18D05 (18D30 18E30 18G35)},
  MRNUMBER = {2671184},
MRREVIEWER = {Vasil\cprime\ \=I.\ Andr\=\i\u ichuk},
       DOI = {10.1016/j.aim.2010.03.011},
       URL = {https://doi.org/10.1016/j.aim.2010.03.011},
}

@incollection {Breen-bitorseur,
    AUTHOR = {Breen, Lawrence},
     TITLE = {Bitorseurs et cohomologie non ab\'elienne},
 BOOKTITLE = {The {G}rothendieck {F}estschrift, {V}ol.\ {I}},
    SERIES = {Progr. Math.},
    VOLUME = {86},
     PAGES = {401--476},
 PUBLISHER = {Birkh\"auser Boston, Boston, MA},
      YEAR = {1990},
      ISBN = {0-8176-3427-4},
   MRCLASS = {18G50 (18G55 55U05 55U15)},
  MRNUMBER = {1086889},
MRREVIEWER = {Antonio\ M.\ Cegarra},
}

@article {Breen-Theorie,
    AUTHOR = {Breen, Lawrence},
     TITLE = {Th\'eorie de {S}chreier sup\'erieure},
   JOURNAL = {Ann. Sci. \'Ecole Norm. Sup. (4)},
  FJOURNAL = {Annales Scientifiques de l'\'Ecole Normale Sup\'erieure.
              Quatri\`eme S\'erie},
    VOLUME = {25},
      YEAR = {1992},
    NUMBER = {5},
     PAGES = {465--514},
      ISSN = {0012-9593},
   MRCLASS = {18G50},
  MRNUMBER = {1191733},
MRREVIEWER = {Graham\ J.\ Ellis},
       URL = {http://www.numdam.org/item?id=ASENS_1992_4_25_5_465_0},
}

@book {SGA4,
     TITLE = {Th\'eorie des topos et cohomologie \'etale des sch\'emas},
    SERIES = {Lecture Notes in Mathematics},
    VOLUME = {Vol. 269},
      NOTE = {S\'eminaire de G\'eom\'etrie Alg\'ebrique du Bois-Marie
              1963--1964 (SGA 4),
              Dirig\'e{} par M. Artin, A. Grothendieck, et J. L. Verdier.
              Avec la collaboration de N. Bourbaki, P. Deligne et B.
              Saint-Donat},
 PUBLISHER = {Springer-Verlag, Berlin-New York},
      YEAR = {1972},
     PAGES = {xix+525},
   MRCLASS = {14-06},
  MRNUMBER = {354652},
}

@article {BCS,
    AUTHOR = {Borisov, Lev A. and Chen, Linda and Smith, Gregory G.},
     TITLE = {The orbifold {C}how ring of toric {D}eligne-{M}umford stacks},
   JOURNAL = {J. Amer. Math. Soc.},
  FJOURNAL = {Journal of the American Mathematical Society},
    VOLUME = {18},
      YEAR = {2005},
    NUMBER = {1},
     PAGES = {193--215},
      ISSN = {0894-0347,1088-6834},
   MRCLASS = {14N35 (14C15 14M25)},
  MRNUMBER = {2114820},
MRREVIEWER = {Domenico\ Fiorenza},
       DOI = {10.1090/S0894-0347-04-00471-0},
       URL = {https://doi.org/10.1090/S0894-0347-04-00471-0},
}

@article {FMN,
    AUTHOR = {Fantechi, Barbara and Mann, Etienne and Nironi, Fabio},
     TITLE = {Smooth toric {D}eligne-{M}umford stacks},
   JOURNAL = {J. Reine Angew. Math.},
  FJOURNAL = {Journal f\"ur die Reine und Angewandte Mathematik. [Crelle's
              Journal]},
    VOLUME = {648},
      YEAR = {2010},
     PAGES = {201--244},
      ISSN = {0075-4102,1435-5345},
   MRCLASS = {14M25 (14D23)},
  MRNUMBER = {2774310},
MRREVIEWER = {Hsian-Hua\ Tseng},
       DOI = {10.1515/CRELLE.2010.084},
       URL = {https://doi.org/10.1515/CRELLE.2010.084},
}

@book {CLS,
    AUTHOR = {Cox, David A. and Little, John B. and Schenck, Henry K.},
     TITLE = {Toric varieties},
    SERIES = {Graduate Studies in Mathematics},
    VOLUME = {124},
 PUBLISHER = {American Mathematical Society, Providence, RI},
      YEAR = {2011},
     PAGES = {xxiv+841},
      ISBN = {978-0-8218-4819-7},
   MRCLASS = {14M25 (05A15 05E45 52B12)},
  MRNUMBER = {2810322},
MRREVIEWER = {Ivan\ Arzhantsev},
       DOI = {10.1090/gsm/124},
       URL = {https://doi.org/10.1090/gsm/124},
}

@article {Klyachko,
    AUTHOR = {Klyachko, A. A.},
     TITLE = {Equivariant bundles over toric varieties},
   JOURNAL = {Izv. Akad. Nauk SSSR Ser. Mat.},
  FJOURNAL = {Izvestiya Akademii Nauk SSSR. Seriya Matematicheskaya},
    VOLUME = {53},
      YEAR = {1989},
    NUMBER = {5},
     PAGES = {1001--1039, 1135},
      ISSN = {0373-2436},
   MRCLASS = {14M25 (14F05)},
  MRNUMBER = {1024452},
MRREVIEWER = {I.\ Dolgachev},
       DOI = {10.1070/IM1990v035n02ABEH000707},
       URL = {https://doi.org/10.1070/IM1990v035n02ABEH000707},
}

@article {IS,
    AUTHOR = {Ilten, Nathan and S\"{u}ss, Hendrik},
     TITLE = {Equivariant vector bundles on {$T$}-varieties},
   JOURNAL = {Transform. Groups},
  FJOURNAL = {Transformation Groups},
    VOLUME = {20},
      YEAR = {2015},
    NUMBER = {4},
     PAGES = {1043--1073},
      ISSN = {1083-4362,1531-586X},
   MRCLASS = {14L30 (14J60)},
  MRNUMBER = {3416439},
MRREVIEWER = {P.\ E.\ Newstead},
       DOI = {10.1007/s00031-015-9312-2},
       URL = {https://doi.org/10.1007/s00031-015-9312-2},
}

@article{Kaneyama, 
   author={Kaneyama, T.},
   title={On equivariant vector bundles on an almost homogeneous variety},
   journal={Nagoya Math. J.},
   volume={57},
   year={1975},
   pages={65--86},
   issn={0027-7630},
}

@article {BDP2016,
    AUTHOR = {Biswas, Indranil and Dey, Arijit and Poddar, Mainak},
     TITLE = {A classification of equivariant principal bundles over
              nonsingular toric varieties},
   JOURNAL = {Internat. J. Math.},
  FJOURNAL = {International Journal of Mathematics},
    VOLUME = {27},
      YEAR = {2016},
    NUMBER = {14},
     PAGES = {1650115, 16},
      ISSN = {0129-167X,1793-6519},
   MRCLASS = {32L05 (14M25 55R91)},
  MRNUMBER = {3593677},
MRREVIEWER = {Ragni\ Piene},
       DOI = {10.1142/S0129167X16501159},
       URL = {https://doi.org/10.1142/S0129167X16501159},
}

@book {TitsBuilding,
    AUTHOR = {Tits, Jacques},
     TITLE = {Buildings of spherical type and finite {BN}-pairs},
    SERIES = {Lecture Notes in Mathematics},
    VOLUME = {Vol. 386},
 PUBLISHER = {Springer-Verlag, Berlin-New York},
      YEAR = {1974},
     PAGES = {x+299},
   MRCLASS = {20G15 (50A20)},
  MRNUMBER = {470099},
MRREVIEWER = {Bruce\ Cooperstein},
}

@book {BrownBuildings,
    AUTHOR = {Brown, Kenneth S.},
     TITLE = {Buildings},
 PUBLISHER = {Springer-Verlag, New York},
      YEAR = {1989},
     PAGES = {viii+215},
      ISBN = {0-387-96876-8},
   MRCLASS = {20-02 (20E32 20G15 22E99 51B25)},
  MRNUMBER = {969123},
MRREVIEWER = {W.\ M.\ Kantor},
       DOI = {10.1007/978-1-4612-1019-1},
       URL = {https://doi.org/10.1007/978-1-4612-1019-1},
}

@book{garrett1997buildings,
  title={Buildings and classical groups},
  author={Garrett, Paul B},
  year={1997},
  publisher={CRC Press}
}

@book {HumphreysLAG,
    AUTHOR = {Humphreys, James E.},
     TITLE = {Linear algebraic groups},
    SERIES = {Graduate Texts in Mathematics},
    VOLUME = {No. 21},
 PUBLISHER = {Springer-Verlag, New York-Heidelberg},
      YEAR = {1975},
     PAGES = {xiv+247},
   MRCLASS = {20GXX (14LXX)},
  MRNUMBER = {396773},
MRREVIEWER = {T.\ Ono},
}

@article {MR3453974,
    AUTHOR = {Biswas, Indranil and Dey, Arijit and Poddar, Mainak},
     TITLE = {Equivariant principal bundles and logarithmic connections on
              toric varieties},
   JOURNAL = {Pacific J. Math.},
  FJOURNAL = {Pacific Journal of Mathematics},
    VOLUME = {280},
      YEAR = {2016},
    NUMBER = {2},
     PAGES = {315--325},
      ISSN = {0030-8730,1945-5844},
   MRCLASS = {14L30 (14M25 14M27)},
  MRNUMBER = {3453974},
MRREVIEWER = {Justin\ Brown},
       DOI = {10.2140/pjm.2016.280.315},
       URL = {https://doi.org/10.2140/pjm.2016.280.315},
}

@book {GIT,
    AUTHOR = {Mumford, D. and Fogarty, J. and Kirwan, F.},
     TITLE = {Geometric invariant theory},
    SERIES = {Ergebnisse der Mathematik und ihrer Grenzgebiete (2) [Results
              in Mathematics and Related Areas (2)]},
    VOLUME = {34},
   EDITION = {Third},
 PUBLISHER = {Springer-Verlag, Berlin},
      YEAR = {1994},
     PAGES = {xiv+292},
      ISBN = {3-540-56963-4},
   MRCLASS = {14D25 (58E05 58F05)},
  MRNUMBER = {1304906},
MRREVIEWER = {Yi\ Hu},
}

@article {MR4960069,
    AUTHOR = {Dasgupta, Jyoti and Khan, Bivas and Biswas, Indranil and Dey,
              Arijit and Poddar, Mainak},
     TITLE = {Classification, reduction, and stability of toric principal
              bundles},
   JOURNAL = {Transform. Groups},
  FJOURNAL = {Transformation Groups},
    VOLUME = {30},
      YEAR = {2025},
    NUMBER = {3},
     PAGES = {1083--1133},
      ISSN = {1083-4362,1531-586X},
   MRCLASS = {14M25 (14J60 32L05)},
  MRNUMBER = {4960069},
       DOI = {10.1007/s00031-023-09812-5},
       URL = {https://doi.org/10.1007/s00031-023-09812-5},
}

@article {MR4166679,
    AUTHOR = {Biswas, Indranil and Dey, Arijit and Poddar, Mainak},
     TITLE = {Tannakian classification of equivariant principal bundles on
              toric varieties},
   JOURNAL = {Transform. Groups},
  FJOURNAL = {Transformation Groups},
    VOLUME = {25},
      YEAR = {2020},
    NUMBER = {4},
     PAGES = {1009--1035},
      ISSN = {1083-4362,1531-586X},
   MRCLASS = {14M15 (14J60 14L15)},
  MRNUMBER = {4166679},
MRREVIEWER = {Jongbaek\ Song},
       DOI = {10.1007/s00031-020-09557-5},
       URL = {https://doi.org/10.1007/s00031-020-09557-5},
}

@phdthesis{Singh2026,
  author       = {Singh, Yash Vardhan},
  title        = {Vector Bundles on Toric Stacks},
  school       = {University of Waterloo},
  year         = {2026},
  url          = {https://hdl.handle.net/10012/23545}
}

@article {ToricStacksI,
    AUTHOR = {Geraschenko, Anton and Satriano, Matthew},
     TITLE = {Toric stacks {I}: {T}he theory of stacky fans},
   JOURNAL = {Trans. Amer. Math. Soc.},
  FJOURNAL = {Transactions of the American Mathematical Society},
    VOLUME = {367},
      YEAR = {2015},
    NUMBER = {2},
     PAGES = {1033--1071},
      ISSN = {0002-9947,1088-6850},
   MRCLASS = {14D23 (14M25)},
  MRNUMBER = {3280036},
MRREVIEWER = {Fabio\ Perroni},
       DOI = {10.1090/S0002-9947-2014-06063-7},
       URL = {https://doi.org/10.1090/S0002-9947-2014-06063-7},
}

@article {ToricStacksII,
    AUTHOR = {Geraschenko, Anton and Satriano, Matthew},
     TITLE = {Toric stacks {II}: {I}ntrinsic characterization of toric
              stacks},
   JOURNAL = {Trans. Amer. Math. Soc.},
  FJOURNAL = {Transactions of the American Mathematical Society},
    VOLUME = {367},
      YEAR = {2015},
    NUMBER = {2},
     PAGES = {1073--1094},
      ISSN = {0002-9947,1088-6850},
   MRCLASS = {14D23 (14M25)},
  MRNUMBER = {3280037},
MRREVIEWER = {Fabio\ Perroni},
       DOI = {10.1090/S0002-9947-2014-06064-9},
       URL = {https://doi.org/10.1090/S0002-9947-2014-06064-9},
}

@article{kundu2026fixed,
  title={Fixed point locus of Moduli spaces of Sheaves on Toric DM stacks},
  author={Kundu, Promit},
  journal={arXiv preprint arXiv:2605.01730},
  year={2026}
}
%--------------------------------------------------------
%--------------------------------------------------------

\end{document}